\newcommand{\ga}{\alpha}
\newcommand{\gb}{\beta}
\newcommand{\gd}{\delta}
\newcommand{\gw}{\omega}
\newcommand{\gS}{\Sigma}
\newcommand{\gs}{\sigma}
\newcommand{\eps}{\varepsilon}
\newcommand{\dotggen}{\dot\gamma_{\mathrm{gen}}}
\newcommand{\coll}{\mathrm{Coll}}
\newcommand{\cantor}{2^\gw}
\newcommand{\dotxgen}{{\dot x}_{\mathit{gen}}}
\newcommand{\supp}{\mathrm{supp}}
\newcommand{\dom}{\mathrm{dom}}
\newcommand{\power}{\mathcal{P}}
\newtheorem{theorem}{Theorem}[section]
\newtheorem{claim}[theorem]{Claim}
\newtheorem{corollary}[theorem]{Corollary}
\newtheorem{fact}[theorem]{Fact}
\newtheorem{proposition}[theorem]{Proposition}
\theoremstyle{definition}
\newtheorem{definition}[theorem]{Definition}
\newtheorem{example}[theorem]{Example}
\title{Noetherian spaces in choiceless set theory\footnote{2020 AMS subject classification 03E25, 03E35.}}
\author{
Jind{\v r}ich Zapletal\\
University of Florida\\
Institute of Mathematics, Czech Academy of Sciences}
\begin{document}
\maketitle

\begin{abstract}
I prove several independence results in the choiceless ZF+DC theory which separate algebraic and non-algebraic consequences of the axiom of choice.
\end{abstract}

\section{Introduction}
Geometric set theory \cite{z:geometric} was developed in part to produce consistency results in choiceless ZF+DC set theory regarding various $\gS^2_1$ sentences. In this paper, I produce several such consistency results which separate $\gS^2_1$ sentences dealing with algebraic topics from those which do not. While the distinction may seem vague, the techniques of the paper show that there is in fact a clearly visible fracture line. 

I isolate the notion of Noetherian balanced forcing and show that in the generic extensions of the choiceless Solovay model by such forcings, numerous $\gS^2_1$ consequences of the axiom of choice fail. The following theorem, stated using the conventions of geometric set theory \cite[Convention 1.7.18]{z:geometric}, provides a somewhat representative list.

\begin{theorem}
\label{firsttheorem}
In cofinally Noetherian balanced extensions of the choiceless Solovay model,

\begin{enumerate}
\item if $A_0, A_1\subset 3^\gw$ are non-meager sets, then there are points $x_0\in A_0$ and $x_1\in A_1$ such that the set $\{n\in\gw\colon x_0(n)=x_1(n)\}$ is finite;
\item if $\Gamma\curvearrowright X$ is a turbulent Polish group action with all orbits dense and meager, and $A_0, A_1\subset X$ are non-meager sets then there are points $x_0\in A_0$ and $x_1\in A_1$ which are orbit equivalent;
\item (if the extension is $\gs$-closed) outer Lebesgue measure is continuous in arbitrary increasing unions.
\end{enumerate}
\end{theorem}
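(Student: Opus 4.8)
All three items will follow from a common reflection lemma about cofinally Noetherian balanced forcing $P$ in the Solovay model $W$, together with short and essentially separate closing arguments. The lemma to prove first: if $\bar p$ is a balanced virtual condition and $\dot A$ is a $P$-name which $\bar p$ forces to be a non-meager subset of a given Polish space $Y$, then there are a balanced virtual condition $\bar q \leq \bar p$ and a non-meager Borel set $B \subseteq Y$ coded in $W$ with $\bar q \Vdash \check B \subseteq \dot A$; the same holds with ``non-meager''/``meager'' replaced throughout by ``non-null''/``null''. The mechanism is the familiar one for balanced forcing — a set forced to be non-meager cannot be ``generically diluted'' because a balanced condition looks the same in any two mutually generic amalgams — but extracting the actual Borel core requires a refinement: one shrinks a basic open set on which $\dot A$ is comeager while strengthening the condition that decides membership. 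The \emph{Noetherian} hypothesis is exactly what forces this refinement to terminate: after passing to traces, the conditions in play form an ascending chain of open sets in a Noetherian space, which therefore has a maximum, and the stable stage yields $B$. Identifying the correct Noetherian space of traces and verifying that cofinal Noetherian-ness of $P$ makes it Noetherian is where I expect the real difficulty to lie.

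Granting the lemma, item (1) is a direct genericity argument. Given non-meager $A_0,A_1 \subseteq 3^\gw$ in the extension, pass to names below a common balanced condition, apply the lemma twice to get non-meager Borel $B_0,B_1$ coded in $W$ which are forced to be subsets of $A_0,A_1$, say $B_i$ comeager in $[s_i]$. Inside $W$ take a countable elementary submodel $M$ with codes for $B_0,B_1$, choose $x_0 \in [s_0]$ Cohen over $M$, and then $x_1 \in [s_1]$ Cohen over $M[x_0]$ subject to $x_1(k)\neq x_0(k)$ for all $k\geq|s_1|$ — this side poset is still Cohen once one value per level is frozen, so $x_1$ is genuinely generic, $x_0\in B_0$, $x_1\in B_1$, and $\{n\colon x_0(n)=x_1(n)\}$ is finite. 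Both reals lie in $W\subseteq W[H]$ and $B_i\subseteq A_i$ there, so $x_0\in A_0$ and $x_1\in A_1$ as wanted. Item (2) has the same skeleton on the Polish $\Gamma$-space $X$: the lemma supplies non-meager Borel $B_0,B_1$ coded in $W$, comeager in open sets $O_0,O_1$; a point $x_0\in O_0$ Cohen over a countable model $M$ carrying the codes lies in $B_0$, and here turbulence, with all orbits dense, is used precisely to connect $x_0$ to a point $x_1\in O_1$ inside the single orbit $[x_0]$ — the standard turbulent-path construction moves through local orbits in steps small enough to keep genericity over $M$, so that $x_1\in B_1$. Then $x_0\in A_0$, $x_1\in A_1$, and $x_0,x_1$ are orbit equivalent.

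For item (3) the Noetherian theme is used directly. Assume in addition that $P$ is $\gs$-closed, so $\R^{W[H]}=\R^W$ and the ambient Lebesgue measure and null ideal are those of $W$; let $\langle A_\alpha\colon\alpha<\lambda\rangle$ be increasing with union $A$. Continuity at $\lambda$ of countable cofinality is the plain ZF fact that Lebesgue measure is continuous along increasing $\gw$-chains of measurable hulls, so reduce to $\lambda$ regular uncountable. Below a balanced condition each name $\dot A_\alpha$ has a balanced-virtual representative, and as $\alpha$ increases these form an ascending chain of open sets in the Noetherian space from the lemma; since $\gw<\lambda=\mathrm{cf}\,\lambda$, the chain has a maximum, so there is $\alpha^*<\lambda$ with $\bar p\Vdash\dot A_{\alpha^*}=\dot A$ modulo null. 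Hence $\mu^*(A)=\mu^*(A_{\alpha^*})=\sup_{\alpha<\lambda}\mu^*(A_\alpha)$, the asserted continuity. Beyond the reflection lemma, the technical points I expect to require care are the side-forcing genericity bookkeeping in (1), the turbulent-path construction with genericity control in (2), and in (3) the precise sense in which the chain of balanced-virtual representatives is open and ascending in the Noetherian space.
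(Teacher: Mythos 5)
Your proposal hinges entirely on the ``reflection lemma'': that below a balanced virtual condition, every name for a non-meager set can be forced to contain a non-meager Borel set coded in $W$. This lemma is false, even for the $\gs$-closed Noetherian balanced posets of Theorem~\ref{secondtheorem}. Take $P$ to be the transcendence (or Hamel) basis poset of Example~\ref{fieldtheorem} and let $\dot A$ name the generic basis $B\subset\R$. A routine density argument (the set of reals algebraic over a countable set is countable) shows $B$ meets the complement of every meager Borel set coded in any intermediate model, so $B$ is non-meager in $W[H]$. But every non-meager Borel set $B_0\subseteq\R$ is comeager in some interval and hence, by Kuratowski--Ulam, contains a three-term arithmetic progression $\{z-t,z,z+t\}$, which is both linearly and algebraically dependent over $\Q$; so $B$ contains no non-meager Borel subset. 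Consequently no argument routed through your lemma can work uniformly for all cofinally Noetherian balanced extensions. A symptom of the problem is that your proposal never uses the actual content of the definition of Noetherian balance --- amalgamation of conditions lying in two \emph{mutually Noetherian} (not mutually generic) extensions --- and instead appeals to an unspecified ``Noetherian space of traces'' of conditions, which is not an object the hypothesis provides.

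The paper's proofs go the other way around: the two witnessing points are \emph{not} extracted from Borel cores in $W$, but are produced in two different generic extensions $V[K][x_0]$, $V[K][x_1]$ of an intermediate model, arranged to be mutually Noetherian via the duplicability examples of Section~\ref{example1section} (the product-graph example for item (1), giving $x_0,x_1\in 3^\gw$ disagreeing everywhere; the turbulence example for item (2), giving $x_1=\gg\cdot x_0$). One then finds, in each of these extensions separately, a condition $p_i\leq\gs/H_i$ forcing $\check x_i\in\tau_i$, and the Noetherian balance of the pair $\langle Q,\gs\rangle$ amalgamates $p_0$ and $p_1$. Your closing steps (two Cohen reals with finite agreement; the turbulent path) are in the right spirit but are applied to the wrong objects. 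For item (3) your stabilization claim --- that the chain of names $\dot A_\ga$ ``has a maximum'' because of Noetherianness --- is unsupported (names for arbitrary subsets of $[0,1]$ do not form an ascending chain of opens in any Noetherian space) and in any case proves far more than continuity; the paper instead derives a contradiction with linearity of $\tau$ by amalgamating, via Proposition~\ref{overlapproposition} and Example~\ref{suslinexample}, two conditions over a mutually random pair $\langle x_0,x_1\rangle$, one putting $x_0$ into an element of $\tau$ avoiding $x_1$ and the other symmetrically.
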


\noindent The theorem implies in particular that there is no non-principal ultrafilter on natural numbers in the extensions. Such an ultrafilter $U$ could be used to divide sequences in $3^\gw$ into three non-meager sets according to their $U$-prevalent value, violating item (1). It also implies that if $\Gamma\curvearrowright X$ is a turbulent Polish group action with an orbit equivalence relation $E$, then every $E$-invariant function $f$ from $X$ to another Polish space $Y$ stabilizes on a co-meager set in the extensions. The reason is that for each basic open set $O\subset Y$, the set $\{x\in X\colon f(x)\in O\}$ is $E$-invariant, and therefore meager or co-meager by (2).

It turns out that many posets adding objects to the choiceless Solovay model which are related to algebraic applications of the Axiom of Choice are Noetherian balanced. The following theorem provides a good list.

\begin{theorem}
\label{secondtheorem}
The following Suslin posets are $\gs$-closed and Noetherian balanced:

\begin{enumerate}
\item a poset adding a Hamel basis, for a Polish field $X$ over a countable subfield $F$ \cite[Example 6.3.6]{z:geometric};
\item a poset adding a transcendence basis, for a Polish field $X$ over a countable subfield $F$ \cite[Example 6.3.10]{z:geometric} ;
\item a poset coloring a given closed Noetherian graph without an uncountable clique \cite{z:ngraphs};
\item a poset coloring $\gs$-algebraic graphs without an uncountable clique \cite{z:distance};
\item a poset coloring the hypergraph of isosceles triangles in $\mathbb{R}^2$ \cite{zhou:isosceles};
\item a poset coloring a given redundant $\gs$-algebraic hypergraph \cite{z:redundant}.
\end{enumerate}
\end{theorem}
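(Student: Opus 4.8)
\textit{Proof sketch.} The plan is to treat the six posets by a common template rather than entirely separately, since in every case the same three ingredients occur. For each poset $P$ on the list, the facts that $P$ is a Suslin forcing, that it is $\sigma$-closed, and the identification of its balanced virtual conditions are already carried out in the indicated reference (items (1) and (2) in \cite{z:geometric}, item (3) in \cite{z:ngraphs}, item (4) in \cite{z:distance}, item (5) in \cite{zhou:isosceles}, item (6) in \cite{z:redundant}). The $\sigma$-closure in particular is immediate: a condition in each of these posets is a countable approximation to the generic object (a countable partial Hamel or transcendence basis in cases (1)--(2), a countable partial proper coloring in cases (3)--(6)), and the union of an increasing $\omega$-chain of such approximations is again one, because the relevant defect --- linear dependence over $F$, algebraic dependence over $F$, or a monochromatic hyperedge --- is finitary. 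Thus the substantive task is to verify, for each $P$, the Noetherian refinement of balancedness.

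I would do this by first recalling the concrete shape of a balanced virtual condition: in every case it is a \emph{total} object --- a full Hamel basis, a full transcendence basis, or a countable proper coloring of the whole (hyper)graph --- presented as a pair consisting of a poset and a name for it. One then checks the Noetherian clause against such a condition $\bar p$. In the two pregeometry cases (1)--(2), the amalgamation of two conditions lying below $\bar p$ is governed entirely by the dependence operator of the pregeometry (linear span over $F$, respectively relative algebraic closure over $F$), which is \emph{finitary} and Borel; this is what is used to produce the Noetherian topology demanded by the definition, the point being that along an increasing chain of conditions below $\bar p$ the associated closed supports can increase strictly only finitely often, since any strict increase is witnessed on a finite subset and the closure of a finite set is reached in finitely many such steps. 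In the four coloring cases (3)--(6) the analogous role is played by the ``forced-color'' propagation attached to a total coloring: the constraint that a given hyperedge not become monochromatic propagates along the hyperedges, and the hypotheses that the hypergraph is closed and Noetherian (case (3)), $\sigma$-algebraic in the sense of a countable decomposition of the edge relation into graphs of continuous functions (cases (4) and (6)), or the explicit plane geometry of isosceles triangles (case (5)), together with the absence of an uncountable clique, are precisely what force this propagation to be finitary in the relevant sense and hence to yield a Noetherian space. Carrying out this computation in each case is what certifies the Noetherian clause, and together with balancedness from the references this completes the argument.

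The main obstacle I anticipate lies in the hypergraph cases (3), (5) and (6): unlike the pregeometry cases the ``closure'' is not literally idempotent or finitary on the nose, so one must pin down the correct auxiliary space and prove it Noetherian using the specific combinatorial hypothesis --- Noetherianity of the closed graph, the $\sigma$-algebraic decomposition, or the elementary geometry of isosceles triangles --- rather than by a soft argument, and one must also genuinely use the ``no uncountable clique'' hypothesis at this point. A secondary point requiring care is the interface with the virtual-condition formalism: the Noetherian clause speaks of conditions of $P$ below a balanced virtual condition in \emph{arbitrary} forcing extensions, so one must ensure the estimates are not merely about the ground model --- but since the dependence and propagation operators involved are Borel and hence absolute, this should reduce to the finitary estimates established in $V$.
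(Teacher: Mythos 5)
There is a genuine gap here: you have misread what the Noetherian clause asks for. ``Noetherian balanced'' does not mean that the poset, or some auxiliary space of supports attached to a virtual condition, carries a Noetherian topology or satisfies a stabilization property along increasing chains. By definition it means that the amalgamation defining balance must succeed for conditions $p_0\in V[G_0]$, $p_1\in V[G_1]$ whenever $V[G_0]$, $V[G_1]$ are merely \emph{mutually Noetherian} extensions rather than mutually generic ones---a strictly larger class of pairs of models, hence a strictly stronger demand than ordinary balance. Your central argument (``along an increasing chain of conditions below $\bar p$ the associated closed supports can increase strictly only finitely often'') proves a chain condition on the poset, which is a different statement and does not bear on whether two conditions living in two mutually Noetherian models admit a common lower bound. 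Nothing in your sketch ever quantifies over a pair of mutually Noetherian extensions, which is the only place the word ``Noetherian'' enters the definition; likewise the ``no uncountable clique'' hypothesis is not used where you anticipate, since it is already consumed by the balance proofs in the references.

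What is actually needed, and what the paper does, is to inspect the existing balance proofs in the cited references, isolate exactly which consequences of mutual genericity of $V[G_0]$, $V[G_1]$ they use, and then show that those consequences already follow from mutual Noetherianness. Concretely: for the Hamel basis poset the only fact used is $2^\omega\cap V[G_0]\cap V[G_1]=2^\omega\cap V$, which is Proposition~\ref{ahproposition}(1); for the transcendence basis poset one needs that a polynomial identity $\phi(\bar x_0,\bar x_1)=0$ with $\bar x_0\in V[G_0]$ and $\bar x_1\in V[G_1]$ reflects to a tuple $\bar x_0'\in V$ (the coheir property of Proposition~\ref{ahproposition}(2,3), resting on the Hilbert Basis Theorem); for the closed Noetherian graph poset one needs that a $\Gamma$-neighbor $x_1\in V[G_1]$ of a point $x_0\in V[G_0]$ can be approximated arbitrarily closely by $\Gamma$-neighbors lying in $V$, which follows from $C(V[G_0],x_1)=C(V,x_1)$ together with Mostowski absoluteness; and for items (4)--(6) one needs the semialgebraic coheir property of Proposition~\ref{ahproposition}(3). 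Your finitary-closure observations do not substitute for these transfer facts between the two models, so the Noetherian clause remains unverified in all six cases; the $\sigma$-closure and Suslinness, which you handle correctly, are indeed the easy part.
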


\noindent There are attendant consistency results too numerous to list separately. For example, it is consistent relative to an inaccessible cardinal that ZF+DC holds, there is a transcendence basis for $\mathbb{R}$ over $\mathbb{Q}$, there is no nonprincipal ultrafilter on $\gw$, and outer Lebesgue measure is continuous under arbitrary increasing unions. Since the class of Noetherian balanced posets is closed under countable product, the consistency results can be even combined.

Section~\ref{nextensionsection} discusses the notion of a subbasis for a Noetherian topology and that of a mutually Noetherian pair of generic extensions of a model of ZFC. This is an instrumental weakening of mutual genericity. In Section~\ref{example1section} I produce several interesting mutually Noetherian pairs of generic extensions, notably one induced by a turbulent action of a Polish group. Section~\ref{preservationsection} defines the notion of a Noetherian balanced Suslin forcing--this is a forcing in which conditions can be successfully amalgamated across mutually Noetherian pairs of generic extensions. There are several attendant preservation theorems for Noetherian balanced Suslin forcings, resulting in the proof of various items of Theorem~\ref{firsttheorem}. Section~\ref{example2section} lists some Noetherian balanced Suslin forcings, proving Theorem~\ref{secondtheorem}. Section~\ref{example3section} finally shows that the conclusions of Theorem~\ref{firsttheorem} can be violated in suitable balanced, not Noetherian balanced, extensions of the Solovay model. 

The paper uses the notation standard of \cite{jech:set}. In matters pertaining to geometric set theory it follows the terminology and notation of \cite{z:geometric}. DC denotes the Axiom of Dependent Choices. The inaccessible cardinal in the assumptions of Theorems~\ref{firsttheorem} and~\ref{secondtheorem} is used to start the method of balanced forcing as in \cite{z:geometric}; I do not know if it is necessary. 

Thanks are due to Chris Lambie-Hanson for suggesting the terminology of Definition~\ref{subbasisdefinition}. The author was partially supported by grant EXPRO 20-31529X of GA \v CR.

\section{Noetherian pairs of generic extensions}
\label{nextensionsection}

The technology of this paper rests on the treatment of Noetherian topologies from a descriptive set theoretic point of view. After plenty of hesitation, I have decided in the favor of the following presentation.

\begin{definition}
\label{subbasisdefinition}
Let $X, Y$ be Polish spaces and $C\subset Y\times X$ be a closed set. The set $C$ is a \emph{Noetherian subbasis} if there is no sequence $\langle b_n\colon n\in\gw\rangle$ of finite subsets of $Y$ such that the sets $D_n=\bigcap_{y\in b_n}C_y$ for $n\in\gw$ strictly decrease with respect to inclusion.
\end{definition}

\noindent It is not difficult to see that being a Noetherian subbasis is a coanalytic property of the set $C$, therefore transitive among well-founded models of set theory. The terminology is justified by the following routine proposition.

\begin{proposition}
Let $C\subset Y\times X$ be a Noetherian subbasis. Then the smallest topology on $X$ containing all complements of vertical sections of $C$ is Noetherian.
\end{proposition}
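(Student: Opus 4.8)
The plan is to show directly that the topology $\tau$ generated by $\basis = \{X \setminus C_y : y \in Y\}$ as a subbasis is Noetherian, i.e.\ that it contains no strictly decreasing $\gw$-sequence of open sets. First I would recall the standard description of $\tau$: a set $U \subseteq X$ is $\tau$-open if and only if it is a union (of arbitrary size) of finite intersections of subbasic sets, i.e.\ $U = \bigcup_{i \in I} (X \setminus \bigcup_{y \in b_i} C_y) = \bigcup_{i \in I} \bigl(X \setminus D_i'\bigr)$ where each $b_i \subseteq Y$ is finite and, dualizing, $D_i' = \bigcup_{y \in b_i} C_y$; equivalently, complementing, the $\tau$-closed sets are exactly the arbitrary intersections $\bigcap_{i \in I} \bigl(\bigcup_{y \in b_i} C_y\bigr)$ of finite unions of vertical sections. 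Actually, since the statement of Definition~\ref{subbasisdefinition} is phrased in terms of the sets $D_n = \bigcap_{y \in b_n} C_y$, it is cleaner to work on the closed side after one de Morgan step: a basic open set is of the form $X \setminus \bigcup_{y \in b} C_y$, so I will instead track the dual Noetherian condition, that there is no strictly increasing $\gw$-chain of $\tau$-open sets, which is equivalent to the descending chain condition on $\tau$-closed sets.

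Next, the key reduction: to refute an infinite strictly ascending chain of $\tau$-open sets it suffices to refute one built from \emph{basic} open sets. Suppose $\langle U_n : n \in \gw\rangle$ strictly increases. Each $U_n$ is a union of basic open sets; pick for each $n$ a point $x_n \in U_{n+1} \setminus U_n$ and a basic open set $V_n \subseteq U_{n+1}$ with $x_n \in V_n$. Then consider $W_n = V_0 \cup V_1 \cup \dots \cup V_{n-1}$. One checks $W_n \subseteq U_n$ and $x_n \in W_{n+1} \setminus U_n \subseteq W_{n+1} \setminus W_n$, so $\langle W_n : n\in\gw\rangle$ is a strictly increasing chain of open sets, each a \emph{finite} union of basic open sets. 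Complementing, $F_n = X \setminus W_n$ is a strictly decreasing chain, and each $F_n$ has the form $\bigcap_{j < n} \bigl(X \setminus V_j\bigr) = \bigcap_{j < n} \bigcup_{y \in b_j} C_y$, a finite intersection of finite unions of vertical sections. Distributing the finite intersection over the finite unions, each $F_n$ is a \emph{finite union} of sets of the form $\bigcap_{y \in b} C_y$ with $b \subseteq Y$ finite — but these sets are, up to this finite-union packaging, exactly the $D$'s of the definition.

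The last step turns the strictly decreasing chain $\langle F_n\rangle$ of finite unions of $D$-type sets into a strictly decreasing chain of single $D$-type sets, contradicting the hypothesis that $C$ is a Noetherian subbasis. Write $F_n = \bigcup_{i < k_n} D^n_i$ where each $D^n_i = \bigcap_{y \in b^n_i} C_y$. The obstruction here — and I expect this to be the one genuinely non-formal point — is passing from "finite unions strictly decrease" to "single members strictly decrease." The clean way is a König's-lemma / pigeonhole argument: since $F_{n+1} \subseteq F_n$, each $D^{n+1}_i$ is contained in $\bigcup_{j<k_n} D^n_j$; absorb redundancies so that each listed $D^n_i$ is $\subseteq$-maximal among those appearing at stage $n$ and no two are nested. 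If $F_{n+1} \subsetneq F_n$ strictly, then some maximal piece drops out or shrinks; threading through infinitely many stages and using that the index sets $b^n_i$ are finite subsets of the fixed space $Y$, one extracts a single sequence $\langle b_n : n\in\gw\rangle$ of finite subsets of $Y$ with $\bigcap_{y\in b_n} C_y$ strictly decreasing — for instance by following, via an infinite branch, a component that keeps properly shrinking, noting that at each step a properly smaller $D$-set containing the relevant witness $x_n$ must be selected from the finite collection at the next stage. This contradicts Definition~\ref{subbasisdefinition}, so no strictly increasing chain of $\tau$-open sets exists, and $\tau$ is Noetherian.
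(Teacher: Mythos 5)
Your reduction of an arbitrary strictly increasing chain of open sets to one of the form $W_n=\bigcup_{j<n}V_j$ with $V_j$ basic is correct, and after complementation it puts you exactly where the paper's Claim~\ref{claim1} starts: a strictly decreasing chain $\langle F_n\colon n\in\gw\rangle$ in the lattice of finite unions of sets of the form $\bigcap_{y\in b}C_y$. But the final step --- passing from ``finite unions of $D$-type sets strictly decrease'' to ``single $D$-type sets strictly decrease'' --- is the entire mathematical content of the proposition, and the mechanism you sketch for it does not work. The inclusion $F_{n+1}\subseteq F_n$ only says that each $D^{n+1}_i$ is contained in the \emph{union} $\bigcup_{j<k_n}D^n_j$; it need not be contained in any single $D^n_j$, so there is no well-defined ``component that keeps properly shrinking'' to follow, and making the stage-$n$ pieces pairwise non-nested does not change this. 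Moreover your witness $x_n$ lies in $F_n\setminus F_{n+1}$ and hence in \emph{no} piece of stage $n+1$, so the phrase ``a properly smaller $D$-set containing the relevant witness $x_n$ must be selected from the finite collection at the next stage'' cannot be made sense of.

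The gap is repairable, in (at least) two ways. The paper's route is a minimal-counterexample argument: among all finite intersections $C_b=\bigcap_{y\in b}C_y$ for which the sequence $\langle F_n\cap C_b\colon n\in\gw\rangle$ fails to stabilize, choose one that is inclusion-minimal (such exists because the subbasis condition forbids an infinite strictly decreasing chain of the $C_b$'s); writing some $F_n\cap C_b\subsetneq C_b$ as a finite union $\bigcup_i(C_b\cap B_i)$ of strictly smaller finite intersections, at least one summand must again fail to stabilize, contradicting minimality. Alternatively, a correct compactness argument tracks \emph{intersections across stages} rather than individual components: pick $x_n\in F_n\setminus F_{n+1}$, form the finitely branching tree of tuples $(i_0,\dots,i_{m-1})$ such that infinitely many $x_n$ lie in $E_m=\bigcap_{j<m}D^j_{i_j}$, and take an infinite branch by K\"onig's lemma; each $E_m$ is a single finite intersection of vertical sections, each $E_m$ contains infinitely many witnesses $x_n$, and each such $x_n$ fails to lie in $E_{n+2}$, so the decreasing sequence $\langle E_m\colon m\in\gw\rangle$ does not stabilize, again contradicting the Noetherian subbasis condition. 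Either way, the step you yourself flagged as ``the one genuinely non-formal point'' is precisely where the proof lives, and as written it is not established.
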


\begin{proof}
Close the collection of vertical sections of $C$ first under finite intersections, then under finite unions, obtaining a collection $\mathcal{A}$. 

\begin{claim}
\label{claim1}
$\mathcal{A}$ contains no infinite sequences strictly decreasing with respect to inclusion.
\end{claim}

\begin{proof}
Suppose towards a contradiction that $\langle A_n\colon n\in\gw\rangle$ is such a sequence. The definitory property of a Noetherian subbasis shows that there is a finite set $b\subset Y$ such that, writing $C_b=\bigcap_{y\in b}C_y$, the sequence of intersections $\langle A_n\cap C_b\colon n\in\gw\rangle$ does not stabilize and the set $C_b$ is inclusion-minimal such. There must be $n\in\gw$ such that $A_n\cap C_b$ is a proper subset of $C_b$. Find a finite collection $\{c_i\colon i\in m\}$ of finite subsets of $Y$ such that $A_n=\bigcup_i B_i$ where $B_i=\bigcap_{y\in c_i}C_y$. It is clear that there must be $i\in m$ such that the sequence of intersections $\langle A_k\cap C_b\cap B_i\colon k\in\gw\rangle$ does not stabilize, contradicting the minimal choice of the set $C_b$.
\end{proof}

\begin{claim}
\label{claim2}
$\mathcal{A}$ is closed under finite unions and arbitrary intersections.
\end{claim}

\begin{proof}
$\mathcal{A}$ is closed under finite unions and finite intersections by its definition. Now, if $a\subset\mathcal{A}$ is any set, then by the previous claim there is a finite subset $b\subset a$ such that $\bigcap a=\bigcap b$. Since $\bigcap b\in\mathcal{A}$, it is also the case that $\bigcap a\in\mathcal{A}$ as desired.
\end{proof}

\noindent It is now clear from Claim~\ref{claim2} that the collection of the complements of sets in $\mathcal{A}$ is a topology, it is the smallest topology containing all complements of vertical sections of the set $C$, and that it is Noetherian by Claim~\ref{claim1}. 
\end{proof} 

\noindent The following observation will be key in numerous contexts.

\begin{proposition}
\label{basicproposition}
Let $X, Y$ be Polish spaces and $C\subset Y\times X$ be a Noetherian subbasis. If $a\subset Y$ is any set, then there is a finite set $b\subset a$ such that $\bigcap_{y\in a}C_y=\bigcap_{y\in b}C_y$.
\end{proposition}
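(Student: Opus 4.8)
The plan is to reduce the claim about an arbitrary index set $a$ to the countable case, and then to apply the definitory property of a Noetherian subbasis directly. First I would observe that it suffices to find, for every set $a\subset Y$, a \emph{countable} subset $a'\subset a$ with $\bigcap_{y\in a}C_y=\bigcap_{y\in a'}C_y$; indeed, once $a'$ is countable, enumerate it as $\langle y_n\colon n\in\gw\rangle$, set $b_n=\{y_0,\dots,y_{n-1}\}$ and $D_n=\bigcap_{y\in b_n}C_y$, and note that $\langle D_n\colon n\in\gw\rangle$ is a decreasing sequence of finite-type sections which, by the hypothesis on $C$, cannot strictly decrease infinitely often; hence it stabilizes at some $D_n$, and then $\bigcap_{y\in a'}C_y=D_n=\bigcap_{y\in b_n}C_y$ with $b_n\subset a$ finite, as desired.

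It remains to produce the countable $a'$. Here I would use that $X$ is Polish, hence hereditarily Lindel\"of, so it is second countable; fix a countable base $\mathcal{U}$ for $X$. The complement $X\setminus\bigcap_{y\in a}C_y=\bigcup_{y\in a}(X\setminus C_y)$ is an open set (each $C_y$ is closed since $C$ is closed), hence a union of countably many members of $\mathcal{U}$, say $\bigcup_{k}U_k$ with $U_k\in\mathcal{U}$ and each $U_k\subset X\setminus C_{y_k}$ for some $y_k\in a$. Then $a'=\{y_k\colon k\in\gw\}$ is countable and $\bigcup_{y\in a'}(X\setminus C_y)\supset\bigcup_k U_k=\bigcup_{y\in a}(X\setminus C_y)$, while the reverse inclusion is trivial since $a'\subset a$; complementing gives $\bigcap_{y\in a'}C_y=\bigcap_{y\in a}C_y$.

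The two ingredients — second countability of $X$ to cut $a$ down to a countable set, and the Noetherian subbasis property to cut a countable set down to a finite one — are both routine, so I expect no serious obstacle. The one point deserving a moment's care is the passage from ``the decreasing sequence $\langle D_n\rangle$ of $\bigcap_{y\in b_n}C_y$ does not strictly decrease infinitely often'' to ``it is eventually constant'': a priori the hypothesis only forbids a sequence that strictly decreases at \emph{every} step, but by thinning to the subsequence of indices at which a strict decrease occurs one gets exactly a forbidden sequence unless that subsequence is finite, i.e. unless $\langle D_n\rangle$ is eventually constant. This is the step I would write out most explicitly.
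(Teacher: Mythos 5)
Your proof is correct, but it takes a detour that the paper's own argument avoids. The paper argues directly by contradiction: if no finite $b\subset a$ works, then one greedily builds an increasing chain $b_0\subset b_1\subset\dots$ of finite subsets of $a$ with $\bigcap_{y\in b_n}C_y\neq\bigcap_{y\in b_{n+1}}C_y$ (at each stage, since $\bigcap_{y\in b_n}C_y$ properly contains $\bigcap_{y\in a}C_y$, some $y\in a$ cuts it down further), and this chain immediately violates Definition~\ref{subbasisdefinition}. Your two-step route --- first cutting $a$ down to a countable $a'$ via second countability of $X$, then stabilizing the initial-segment intersections of an enumeration of $a'$ --- is sound, and your care about extracting a strictly decreasing subsequence from a non-stabilizing decreasing sequence is exactly the right point to make explicit (the paper's greedy construction builds the strict decreases in by hand, so it never faces this issue). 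The trade-off: your argument leans on the topology of $X$ (hereditary Lindel\"of) and on countable choice to pick the witnesses $y_k$, whereas the paper's argument uses no topology beyond what is already packaged in the definition and only the dependent choices needed to build the chain; in the ZF+DC setting of the paper both are legitimate, but the direct contradiction is shorter and slightly more robust.
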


\begin{proof}
If this failed for some set $a\subset Y$, it would be possible to build finite sets $b_n\subset a$ for $n\in\gw$ such that $b_n\subset b_{n+1}$ and $\bigcap_{y\in b_n}C_y\neq \bigcap_{y\in b_{n+1}}C_y$. The sets $b_n$ for $n\in\gw$ would then violate the Noetherian property of the set $C$.
\end{proof}

\noindent In the way of an illustration, the following example of a Noetherian subbasis is ubiquitous in algebra.

\begin{example}
Let $n\in\gw$ be a number and let $X$ be a Polish field.  Let $Y$ be a Polish space indexing all polynomials with $n$ free variables and coefficients in $X$. Then $C\subset Y\times X^n$, defined by $\langle y, \bar x\rangle\in C$ if and only if $y(\bar x)=0$ is a Noetherian subbasis. The nonexistence of decreasing sequences of finite intersections of vertical sections of the set $C$ is verified via the Hilbert Basis Theorem \cite{hilbert:basis}.
\end{example}

\noindent The following definition is the key tool for connecting Noetherian topologies with geometric set theory.

\begin{definition}
\label{extdefinition}
Let $X, Y$ be Polish spaces and $C\subset Y\times X$ be a Noetherian subbasis. Let $M$ be a transitive model of set theory coding $X, Y$, and $C$. Let $A\subset X$ be a set. The symbol $C(M, A)$ denotes the set $\bigcap\{C_y\colon y\in Y\cap M$ and $A\subseteq C_y\}$. If the set in the scope of the $\bigcap$ sign is empty, then set $C(M, A)=X$.
\end{definition}

\noindent The most common situation for applying this definition is that neither the set $A$ nor any of its elements belong to the model $M$. Note that the set $C(M, A)$ is coded in the model $M$ as the intersection defining it can be obtained as an intersection $\bigcap_{y\in b}C_y$ for a suitable finite set $b\subset Y\cap M$ by Proposition~\ref{basicproposition}. 

\begin{definition}
Let $V[G_0]$ and $V[G_1]$ be generic extensions of $V$ inside an ambient generic extension.  Say that $V[G_1]$ is a \emph{Noetherian to} $V[G_0]$ (over $V$) if for every Noetherian subbasis $C\subset Y\times X$ coded in $V$ and for every set $A\subset X$ in  $V[G_1]$, $C(V[G_0], A)=C(V, A)$. Say that the extensions $V[G_0], V[G_1]$ are \emph{mutually Noetherian} if each is Noetherian to the other. Say that $V[G_1]$ is \emph{$K_\gs$-Noetherian to} $V[G_0]$ if the above equation holds for all $K_\gs$ Noetherian subbases coded in $V$, and similarly for the mutual $K_\gs$-Noetherian relation.
\end{definition}

\noindent It is useful to introduce special terminology for $K_\gs$ Noetherian subbases, since most natural examples are $K_\gs$ and $K_\gs$ subbases are easier to deal with in several respects which appear elsewhere \cite{z:distance}. Similar notions of perpendicularity always have a friendly relationship with product forcing, as recorded in the following proposition.

\begin{proposition}
\label{productproposition}
Let $n\geq 1$ be a number. Let $V[G_0], V[G_1]$ be generic extensions and $V[G_1]$ is Noetherian to $V[G_0]$. Suppose that $P_0\in V[G_0]$ and $P_1\in V[G_1]$ be posets and $H_0\subset P_0$ and $H_1\subset P_1$ be filters mutually generic over $V[G_0, G_1]$. Then $V[G_1][H_1]$ is Noetherian to $V[G_0][H_0]$. Similarly for $K_\gs$-Noetherian.
\end{proposition}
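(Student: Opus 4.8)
The plan is to fix a Noetherian subbasis $C\subset Y\times X$ coded in $V$ and a set $A\subset X$ lying in $V[G_1][H_1]$, and to show $C(V[G_0][H_0],A)=C(V,A)$; the inclusion $C(V[G_0][H_0],A)\subseteq C(V,A)$ is automatic since $V\subseteq V[G_0][H_0]$, so the work is the reverse inclusion. Unraveling the definitions, I must show that whenever $y\in Y\cap V[G_0][H_0]$ satisfies $A\subseteq C_y$, there is a finite $b\subset Y\cap V$ with $C_y\supseteq\bigcap_{z\in b}C_z\supseteq\dots$ — more precisely, it suffices to show $C(V,A)\subseteq C_y$ for every such $y$. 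So the real point is: a vertical section $C_y$ with $y$ coded in the ``perturbed'' model $V[G_0][H_0]$ that contains $A$ must already contain the $V$-computed hull $C(V,A)$.

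The first step is to reduce from $V[G_0][H_0]$ to $V[G_0]$. Here I would use that $H_0$ is $P_0$-generic over $V[G_0]$ together with the mutual genericity of $H_0,H_1$ over $V[G_0,G_1]$: since $A\in V[G_1][H_1]$ and $H_1$ is generic over $V[G_0,G_1]$ — but $A$ itself need not be in $V[G_0,G_1]$, so one cannot simply ``move $A$ down.'' Instead I would argue on the side of $y$. Suppose $p\in H_0$ forces (over $V[G_0]$, with a name $\dot y$) that $\dot y\in Y$ and $\check A^{G_1,H_1}\subseteq C_{\dot y}$; by genericity and the fact that $C$ is closed hence absolute, for each $x\in A$ the condition $p$ forces $\langle \dot y,\check x\rangle\in C$, and since $x$ ranges over a set in $V[G_1][H_1]$ while $P_0\in V[G_0]$, a mutual-genericity/product argument shows that in fact $p$ decides enough of $\dot y$ that one obtains a point $y'\in Y\cap V[G_0]$ with $A\subseteq C_{y'}$ and $C_{y'}\subseteq C_y$. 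The cleanest way to package this: the set $\{x\in X : \langle y,x\rangle\in C\}=C_y$ is closed, and $C_y\cap A = A$; working in $V[G_0,G_1][H_0\times H_1]$ and using that the two forcings are in the respective ground models $V[G_0]$, $V[G_1]$, one shows the $V[G_0]$-points $y''$ with $A\subseteq C_{y''}$ already witness $C(V[G_0],A)\subseteq C_y$. Then the hypothesis that $V[G_1]$ is Noetherian to $V[G_0]$ (applied in the ambient extension $V[G_0,G_1]$, to the set $A\in V[G_1]$? — no: $A\in V[G_1][H_1]$) forces one more reduction.

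This is where the main obstacle sits: the hypothesis ``$V[G_1]$ is Noetherian to $V[G_0]$'' is stated for sets $A$ lying in $V[G_1]$, but our $A$ lies in the larger model $V[G_1][H_1]$. The resolution I would pursue is to note that $C(M,A)$ depends on $A$ only through the family $\{y\in Y\cap M : A\subseteq C_y\}$, and by Proposition \ref{basicproposition} this family has the same intersection as $\bigcap_{y\in b}C_y$ for a finite $b\subset Y\cap M$ which is definable from finitely much data; more usefully, one can replace $A$ by the closed set $A'=C(V,A)\in V$ (coded in $V$ by the remark after Definition \ref{extdefinition}), observe $A\subseteq A'$ hence $C(M,A)\subseteq C(M,A')$ for every $M\supseteq V$, and conversely any $C_y\supseteq A$ with $y\in M$ satisfies $C_y\supseteq C(V,A)=A'$ by minimality of the hull, so $C(M,A)=C(M,A')$ for all such $M$. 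Now $A'\in V\subseteq V[G_1]$, so the hypothesis applies directly: $C(V[G_0],A')=C(V,A')=A'$. Chaining the reductions — $C(V[G_0][H_0],A)=C(V[G_0][H_0],A')$, then the product/mutual-genericity argument giving $C(V[G_0][H_0],A')\subseteq C(V[G_0],A')$, then $C(V[G_0],A')=A'=C(V,A)$ — closes the proof. The $K_\gs$ case is identical, since every step preserves the complexity class of $C$ and Proposition \ref{basicproposition} is used only for its finiteness conclusion. The step I expect to be genuinely delicate is the product/mutual-genericity reduction from $V[G_0][H_0]$ to $V[G_0]$, i.e.\ showing that a vertical section coded after forcing with $P_0$ and containing the $V$-coded closed set $A'$ must contain a $V[G_0]$-coded section that already contains $A'$; this is where one must carefully invoke that $P_0\in V[G_0]$, that $A'$ is coded in $V$, and that $C$ is closed (hence its sections are absolutely computed).
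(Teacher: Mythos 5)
Your reduction of $A$ to a set visible below $V[G_1][H_1]$ is circular, and that is where the proof breaks. You set $A'=C(V,A)$ and claim $C(M,A)=C(M,A')$ for $M=V[G_0][H_0]$, justifying the hard inclusion by asserting that every $C_y\supseteq A$ with $y\in Y\cap M$ satisfies $C_y\supseteq C(V,A)$ ``by minimality of the hull.'' That assertion \emph{is} the proposition: it says $C(V[G_0][H_0],A)\supseteq C(V,A)$, and there is no minimality to invoke, since $C(V,A)$ is minimal only among intersections of $V$-coded sections containing $A$, whereas your $y$ ranges over the larger model. (Were the assertion automatic, every pair of extensions would be mutually Noetherian; the remark following the statement of the product-graph example in Section~\ref{example1section} shows this fails.) Once $A'$ is taken to be $C(V,A)\in V$, every remaining link of your chain is vacuous: $C(V[G_0],A')=C(V,A')=A'$ and $C(V[G_0][H_0],A')=A'$ hold trivially for arbitrary extensions, so the hypothesis that $V[G_1]$ is Noetherian to $V[G_0]$ is never used in a non-trivial way. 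Likewise the step you flag as delicate --- sections coded in $V[G_0][H_0]$ containing the $V$-coded closed set $A'$ --- carries no content, since any such section automatically contains $C(V,A')=A'$.

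The missing idea is the correct substitute for $A$: pass to names rather than to $C(V,A)$. Fix a $P_1$-name $\tau_1\in V[G_1]$ for $A$ and a condition $p_1\in H_1$, and let $A'$ be the closed set of possible values of $\tau_1$ below $p_1$, namely $X\setminus\bigcup\{O\colon O\subset X$ open and $p_1\Vdash O\cap\tau_1=0\}$. This $A'$ lies in $V[G_1]$ but in general not in $V$, so the hypothesis applies non-vacuously and yields $D:=C(V[G_0],A')=C(V,A')$; since $p_1\Vdash\tau_1\subseteq A'\subseteq D$ and $D$ is a finite intersection of $V$-coded sections, the inclusion $C(V[G_0][H_0],A)\subseteq D$ follows at once. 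The reverse inclusion is then a genuine forcing argument over $V[G_0,G_1]$ with the product $P_0\times P_1$: if some $p_0'\leq p_0$ forced the finite intersection computing $C(V[G_0][H_0],A)$ to be a proper subset of $D$, evaluating the relevant $P_0$-name along a filter generic over a countable elementary submodel of $V[G_0]$ produces an intersection $E\subsetneq D$ of $V[G_0]$-coded sections; minimality of $D=C(V[G_0],A')$ gives $A'\not\subseteq E$, hence a basic open $O$ meeting $A'$ but disjoint from $E$, whence some extension of $p_1$ forces $\tau_1\cap O\neq 0$ while a condition below $p_0'$ forces the intersection to miss $O$, contradicting $A\subseteq C(V[G_0][H_0],A)$ via the mutual genericity of $H_0,H_1$. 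None of this structure is recoverable from your proposal as written.
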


\begin{proof}
Work in the model $V[G_0, G_1]$ and consider the poset $P_0\times P_1$. Let $X, Y$ be Polish spaces and $C\subset Y\times X$ be a Noetherian subbasis coded in $V$. Let $p_0\in P_0$ and $p_1\in P_1$ be conditions and let $\tau_0, \tau_1$ be respective $P_0, P_1$-names in the models $V[G_0], V[G_1]$ such that $p_0\Vdash\tau_0\subset X$, $\tau_1\subset Y$ is a finite set, and $\langle p_0, p_1\rangle\Vdash\bigcap_{y\in\tau_0}C_y=C(V[G_0][H_0], \tau_1)$; I must produce a ground model coded closed set $D\subset X$ such that $p_0\Vdash\bigcap_{y\in\tau_0}C_y=D$. 

Working in $V[G_1]$, form the closed set $A\subset X$ as $A=X\setminus \bigcup\{O\colon O\subset X$ is open and $p_1\Vdash O\cap\tau_1=0$. By the initial assumptions on the models $V[G_0]$ and $V[G_1]$, $C(V[G_0], A)=C(V, A)$ holds; write $D$ for the common value. Observe that $p_1\Vdash\tau_1\subset D$.  It will be enough to show that $p_0\Vdash \bigcap_{y\in\tau_0}C_y=D$.

Since $p_1\Vdash\tau_1\subset D$, the only way how the equality can fail is that there is a condition $p'_0\leq p_0$ forcing $\tau_0$ to be a proper subset of $D$. Working in $V[G_0]$ under this assumption, let $M_0$ be a countable elementary submodel of some large structure containing $\tau_0, D$, and $p'_0$. Let $h_0\subset P_0\cap M_0$ be a filter generic over the model $M_0$ and let $b=\tau_0/h_0$. The set $E=\bigcap_{y\in b}C_y$ is a proper subset of $D$, so $A\subseteq E$ fails. Thus, there must be a basic open set $O\subset X$ disjoint from $E$ which contains some element of the set $A$. By the definitions, this means that there is a condition $p''_0\leq p'_0$ in the filter $h_0$ which forces $\tau_0\cap O=0$, and a condition $p'_1\leq p_1$ which forces $\tau_1\cap O\neq 0$. This contradicts the initial assumptions on the conditions $p_0, p_1$.
\end{proof}

\begin{corollary}
Mutually generic extensions are mutually Noetherian.
\end{corollary}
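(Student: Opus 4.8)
The plan is to derive this from Proposition~\ref{productproposition} by viewing two mutually generic extensions of $V$ as the result of a product forcing over the trivial extension. First I would let $V[H_0]$ and $V[H_1]$ be mutually generic extensions of $V$, say by posets $P_0, P_1 \in V$ with $H_0 \subset P_0$ and $H_1 \subset P_1$ mutually generic. I would like to instantiate Proposition~\ref{productproposition} with $G_0 = G_1 = $ (the trivial generic), so that $V[G_0] = V[G_1] = V$; the hypothesis ``$V[G_1]$ is Noetherian to $V[G_0]$'' then reads ``$V$ is Noetherian to $V$ over $V$'', which is immediate since $C(V, A) = C(V, A)$ for every $A$ and every Noetherian subbasis $C$ coded in $V$. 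The conclusion of the proposition is then that $V[H_1]$ is Noetherian to $V[H_0]$; by symmetry (swapping the roles of the two coordinates) $V[H_0]$ is also Noetherian to $V[H_1]$, so the two extensions are mutually Noetherian. The same argument with $K_\gs$ in place of Noetherian everywhere handles the $K_\gs$ case.

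The only point needing care is that Proposition~\ref{productproposition} as stated requires $H_0, H_1$ to be mutually generic \emph{over} $V[G_0, G_1]$, which here is just $V$, so ``mutually generic over $V[G_0,G_1]$'' is exactly the hypothesis we started with; and the ambient generic extension in which all the models live can be taken to be $V[H_0, H_1]$ itself. Thus no genuinely new work is required beyond observing that the degenerate case $G_0 = G_1 = \varnothing$ is legitimate.

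Alternatively, if one prefers not to invoke the degenerate case of the proposition, I would give the direct argument, which is simply Proposition~\ref{productproposition}'s proof stripped of the $G_i$-layer: given a Noetherian subbasis $C \subset Y \times X$ coded in $V$, a set $A \subset X$ in $V[H_1]$, and (by Proposition~\ref{basicproposition} applied in $V[H_1]$) a name $\tau$ for a finite subset of $Y$ witnessing $C(V[H_1], A) = \bigcap_{y \in \tau} C_y$, one must show $\bigcap_{y\in\tau}C_y = C(V, A')$ where $A'$ is the $V$-coded closed set obtained from $H_1$ as in the proof. The one-sided argument there — using a condition $p_0' \in P_0$ forcing $\tau_0$ to be a proper subset of $D$, passing to a countable elementary submodel, and deriving a contradiction from mutual genericity via disjoint basic open sets — goes through verbatim with $V$ in place of $V[G_0, G_1]$. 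I expect the main (and only) obstacle to be purely bookkeeping: making sure the quantifier ``for every set $A \subset X$ in $V[G_1]$'' in the definition of ``Noetherian to'' is matched correctly, i.e. that we really are checking $C(V[H_0], A) = C(V, A)$ for $A \in V[H_1]$ and not for $A$ in the larger model $V[H_0, H_1]$; but this is automatic since $A \in V[H_1] \subseteq V[H_0, H_1]$ and the definition is applied inside the ambient extension.
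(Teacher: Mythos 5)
Your proposal is correct and is exactly the argument the paper intends: the corollary is stated without separate proof precisely because it is the degenerate instance of Proposition~\ref{productproposition} with $G_0=G_1$ trivial, where the hypothesis ``$V$ is Noetherian to $V$'' holds vacuously. Your symmetry remark and the check that ``mutually generic over $V[G_0,G_1]=V$'' matches the hypothesis are the only points that need saying, and you said them.
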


\noindent The following more elaborate product forcing feature will come handy as well.

\begin{proposition}
\label{overlapproposition}
Suppose that $V[G_0, G_1]$ is a generic extension. Suppose that $V[G_0, G_1][K_0]$ and $V[G_0, G_1][K_1]$ are mutually generic extensions of $V[G_0, G_1]$ and $H_0\in V[G_0, G_1][K_0]$ and $H_1\in V[G_0, G_1][K_1]$ are filters such that 

\begin{enumerate}
\item $G_0\in V[H_0]$ and $G_1\in V[H_1]$;
\item $V[G_0, G_1]$ is Noetherian to $V[H_0]$ over $V[G_0]$;
\item $V[G_1]$ is Noetherian to $V[H_0]$ over $V$.
\end{enumerate}

\noindent Then $V[H_1]$ is Noetherian to $V[H_0]$ over $V$. Similarly for $K_\gs$-Noetherian.
\end{proposition}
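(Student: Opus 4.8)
The plan is to unwind the definition of ``Noetherian to'' directly. Fix a Noetherian subbasis $C\subset Y\times X$ coded in $V$ and a set $A\subset X$ lying in $V[H_1]$; I must show $C(V[H_0],A)=C(V,A)$. Since $C(V,A)\subseteq C(V[H_0],A)$ is automatic (more models can only add more vertical sections containing $A$, hence shrink the intersection — wait, that is the wrong direction), let me be careful: $V\subseteq V[H_0]$ so $Y\cap V\subseteq Y\cap V[H_0]$, hence the intersection defining $C(V[H_0],A)$ is over a larger index set and $C(V[H_0],A)\subseteq C(V,A)$. So the content is the reverse inclusion: every $V[H_0]$-coded vertical section $C_y$ with $A\subseteq C_y$ must already satisfy $C(V,A)\subseteq C_y$. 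By Proposition~\ref{basicproposition} applied inside $V[H_0]$, it suffices to treat a single such $y\in Y\cap V[H_0]$.

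The strategy is to pass through the intermediate model $V[G_0,G_1]$ and use the two Noetherian hypotheses (2) and (3) as stepping stones, with the mutual genericity of $K_0,K_1$ over $V[G_0,G_1]$ supplying the geometric separation argument exactly as in the proof of Proposition~\ref{productproposition}. First I would replace $A$ by its ``closure'' relative to $H_1$: working in $V[H_1]$, but using only the information about $A$ visible through $G_1$ (which lies in $V[H_1]$ by (1)), I form a closed set $A'\supseteq A$ definable over $V[G_1]$ in the manner of that earlier proof — namely $A'=X\setminus\bigcup\{O:O$ open, forced by the relevant condition to miss $A\}$ — so that $A\subseteq A'$ and $C(V[G_1],A')$ is controlled. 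Applying hypothesis (3), $C(V[H_0],A')=C(V,A')$; call this common closed set $D$, which is coded in $V$. Since $A\subseteq A'$ we get $C(V,A)\subseteq C(V,A')=D$, and it remains to check $C(V[H_0],A)\supseteq D$, i.e. that no $V[H_0]$-section containing $A$ can properly cut $D$.

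For that last step I would run the separation argument: if some $y\in Y\cap V[H_0]$ had $A\subseteq C_y$ but $D\not\subseteq C_y$, pick a basic open $O\subset X$ meeting $D$ but disjoint from $C_y$. On one side, $O\cap C_y=\emptyset$ together with $A\subseteq C_y$ is a fact about $A$ and a $V[H_0]$-coded object; on the other side, $O\cap D\neq\emptyset$ forces, by the construction of $D$ from $A'$ via (3) and the definition of $A'$ over $V[G_1]$, that some condition in $H_1$ puts a point of $A$ into $O$ — contradicting $O\cap C_y=\emptyset$. The two mutual-genericity hypotheses on $K_0$ and $K_1$ over $V[G_0,G_1]$, via (1), are what let me find the two contradicting conditions in (a ``forcing reading'' of) $H_0$ and $H_1$ simultaneously; hypothesis (2), that $V[G_0,G_1]$ is Noetherian to $V[H_0]$ over $V[G_0]$, is used to transfer the computation of $C_y$ from $V[H_0]$ down to $V[G_0,G_1]$ so that the separation can be witnessed by conditions rather than by elements of the generics themselves. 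The $K_\gs$ case is identical since every appeal to the Noetherian property, to Proposition~\ref{basicproposition}, and to the closure construction respects the $K_\gs$ restriction.

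The main obstacle I anticipate is bookkeeping the three models and the two layers of genericity so that the single separation step has, on each side, a condition forcing the relevant fact: one has to arrange that $H_0$ and $H_1$ — which a priori live in the incomparable extensions $V[G_0,G_1][K_0]$ and $V[G_0,G_1][K_1]$ — can be confronted over a common model, and this is exactly where hypotheses (1), (2) and the mutual genericity of $K_0,K_1$ must be combined in the right order; getting the closed set $A'$ defined over $V[G_1]$ (not merely over $V[H_1]$) so that (3) applies is the delicate point, and it parallels, but is strictly more involved than, the ``form the closed set $A$'' paragraph in the proof of Proposition~\ref{productproposition}.
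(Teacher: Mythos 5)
Your proposal correctly identifies the content of the statement (only the inclusion $C(V,A)\subseteq C(V[H_0],A)$ needs proof) and correctly senses that one must pass through an intermediate, better-coded approximation of $A$. But the route you sketch has a genuine gap at its center, and it bypasses the much simpler argument the paper actually uses. Your plan is to build a closed set $A'\supseteq A$ \emph{definable over $V[G_1]$} by the ``closure of the name'' device from the proof of Proposition~\ref{productproposition}, and then apply hypothesis (3) to $A'$. This construction is not available: $A$ lives in $V[H_1]$, and the hypotheses give no poset in $V[G_1]$ for which $H_1$ is generic --- $H_1$ is only known to lie in $V[G_0,G_1][K_1]$, which is generic over $V[G_0,G_1]$, not over $V[G_1]$. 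So there is no $V[G_1]$-name for $A$ whose closure you could take; the only model over which a name-closure of $A$ can be formed is $V[G_0,G_1]$, and a set coded there is not reached by hypothesis (3). Separately, your final separation step does not work as stated: from a basic open $O$ meeting $D=C(V,A')$ but missing $C_y$ you cannot conclude that a point of $A$ (or even of $A'$) lies in $O$, since $D$ is in general much larger than $A'$; the analogous step in Proposition~\ref{productproposition} uses the \emph{minimality} of $D$ among ground-model-coded intersections containing the closure of the name, which your setup does not reproduce. Hypothesis (2) is never actually deployed in your argument beyond a one-line gesture.

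The paper's proof needs no new forcing or separation argument at all. It sets $B=C(V[G_0,G_1],A)$, notes by mutual genericity of $K_0,K_1$ and Proposition~\ref{productproposition} that $B=C(V[G_0,G_1,K_0],A)$, and then uses only the monotonicity and idempotence-type identities of the operator $C(\cdot,\cdot)$: since $V[H_0]\subseteq V[G_0,G_1,K_0]$, every $V[H_0]$-coded section containing $A$ contains $B$, whence $C(V[H_0],A)=C(V[H_0],B)$; similarly $C(V[G_0],A)=C(V[G_0],B)$. Hypothesis (2) applies to the $V[G_0,G_1]$-coded set $B$ to give $C(V[H_0],B)=C(V[G_0],B)$, and hypothesis (3) closes the chain down to $C(V,A)$. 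The key move you are missing is precisely this replacement of $A$ by the already-coded closed set $B=C(V[G_0,G_1],A)$, which converts the whole problem into bookkeeping of inclusions between models.
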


\begin{proof}
Let $X, Y$ be Polish spaces, $C\subset Y\times X$ a Noetherian basis coded in $V$, and suppose that $A\subset X$ is a set in $V[H_1]$. To see that $C(V[H_0], A)=C(V, A)$, consider the set $B=C(V[G_0, G_1], A)$. By the mutual genericity of $K_0, K_1$ and Proposition~\ref{productproposition}, $C(V[G_0, G_1, K_0], A)=B$. Now, observe that equality $C(V[H_0], A)=C(V[H_0], B)$ must hold: the left-to-right inclusion follows from $A\subseteq B$, and the right-to-left inclusion follows the fact that $B=C(V[G_0, G_1, K_0], A)\subseteq C(V[H_0], A)$. By (2), $C(V[H_0], B)=C(V[G_0, B])$ holds. Observe that $C(V[G_0], A)=C(V[G_0], B)$ must hold: the left-to-right inclusion follows from $A\subseteq B$ again, and the right-to-left follows from the fact that $B=C(V[G_0, G_1], A)\subseteq C[V[G_0], A)$. By (3), $C(V[G_0], A)=C(V, A)$ holds. In conclusion, $C(V[H_0], A)=C(V, A)$ holds as required. 
\end{proof}

\noindent The following proposition collects several useful features of mutually Noetherian extensions.

\begin{proposition}
\label{ahproposition}
Let $V[G_0], V[G_1]$ be mutually Noetherian extensions. Then

\begin{enumerate}
\item $\cantor\cap V[G_0]\cap V[G_1]=\cantor\cap V$;
\item let $X$ be a Polish field, $p(\bar v_0, \bar v_1)$ a polynomial with all parameters in $V$ and all free variables listed. Let $\bar x_0\in V[G_0]$ and $\bar x_1\in V[G_1]$ be tuples such that $X\models p(\bar x_0, \bar x_1)=0$. Then there is a tuple $\bar x_1'\in V$ such that $X\models p(\bar x_0, \bar x'_1)=0$;
\item let $\phi(\bar v_0, \bar v_1)$ be a formula of the language of real closed fields with real parameters in $V$, with all free variables listed. Let $V[G_0], V[G_1]$ be mutually Noetherian extensions and $\bar x_0\in V[G_0]$ and $\bar x_1\in V[G_1]$ be tuples of reals such that $\mathbb{R}\models \phi(\bar x_0, \bar x_1)$ holds. Then there is a tuple $\bar x'_1\in V$ of reals such that $\mathbb{R}\models\phi(\bar x_0, \bar x'_1)$ holds.
\end{enumerate}
\end{proposition}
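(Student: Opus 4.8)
The three items are independent and each is obtained by choosing the right Noetherian subbasis coded in $V$ and invoking the mutual Noetherian hypothesis. For item (1) take $C=\{\langle y,x\rangle\in\cantor\times\cantor\colon y=x\}$; its vertical sections are singletons, so every nonempty finite intersection $\bigcap_{y\in b}C_y$ is a singleton and there is no infinite strictly $\subseteq$-decreasing sequence of such, i.e.\ $C$ is a Noetherian subbasis coded in $V$. Given $z\in\cantor\cap V[G_0]\cap V[G_1]$, put $A=\{z\}\in V[G_1]$. Since $z\in V[G_0]$, the only $y\in\cantor\cap V[G_0]$ with $A\subseteq C_y$ is $y=z$, so $C(V[G_0],A)=\{z\}$; by ``$V[G_1]$ Noetherian to $V[G_0]$'' this equals $C(V,A)$. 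If $z\notin V$ there is no $y\in\cantor\cap V$ with $A\subseteq C_y$, so $C(V,A)=\cantor\ne\{z\}$ by the convention of Definition~\ref{extdefinition} --- a contradiction; hence $z\in V$ and the reverse inclusion is trivial. For item (2), let $n$ be the length of $\bar v_1$ and let $C\subset Y\times X^n$ be the polynomial-vanishing Noetherian subbasis of the Example. Substituting $\bar x_0$ into $p$ gives a polynomial $q:=p(\bar x_0,\cdot)$ coded in $V[G_0]$ with $q(\bar x_1)=0$, so $C(V[G_0],\{\bar x_1\})\subseteq C_q$. Since $\{\bar x_1\}\in V[G_1]$, the set $F:=C(V[G_0],\{\bar x_1\})$ equals $C(V,\{\bar x_1\})$, hence is coded in $V$ (Proposition~\ref{basicproposition}), and $F\ne\emptyset$ because $\bar x_1\in F$. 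Nonemptiness of a $V$-coded closed set is absolute, so some $\bar x_1'\in F\cap V$ exists, and then $\bar x_1'\in C_q$, i.e.\ $p(\bar x_0,\bar x_1')=0$.

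For item (3) the plan is to reduce the logical complexity and then re-run the argument of item (2), paying special attention to strict inequalities. By Tarski's quantifier elimination for real closed fields, $\phi$ is equivalent over $\mathrm{RCF}$ to a quantifier-free formula with parameters still in $\R\cap V$; after substituting $\bar x_0$, passing to disjunctive normal form and picking a disjunct true of $\bar x_1$, one obtains polynomials $p_i,q_j,r_k$ with coefficients in $\R\cap V[G_0]$ such that $p_i(\bar x_1)=0$, $q_j(\bar x_1)\ne0$, $r_k(\bar x_1)>0$ for all $i,j,k$, and such that any $\bar y$ satisfying these conditions satisfies $\phi(\bar x_0,\bar y)$. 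As in item (2), set $F:=C(V,\{\bar x_1\})=C(V[G_0],\{\bar x_1\})$ for the polynomial-vanishing subbasis; then $F$ is a variety coded in $V$ with $\bar x_1\in F$, and by the defining property of $C(V[G_0],\cdot)$ the ideal of polynomials over $\R\cap V[G_0]$ vanishing at $\bar x_1$ is exactly the ideal of those vanishing on $F$. This ideal is prime, so $F$ is irreducible over $V[G_0]$ (hence over $V$), $\bar x_1$ is a generic point of $F$, and $\bar x_1$ lies in the $V$-definable smooth locus $F_{\mathrm{sm}}$ of $F$ --- its complement is a $V$-subvariety properly contained in the minimal $V[G_0]$-variety through $\bar x_1$, so cannot contain $\bar x_1$. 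Since $F\subseteq\{\bar y\colon p_i(\bar y)=0\ \forall i\}$, the set $U:=\{\bar y\in F\colon q_j(\bar y)\ne0\ \forall j,\ r_k(\bar y)>0\ \forall k\}$ is Euclidean-open in $F$ and contains $\bar x_1$, so $U\cap F_{\mathrm{sm}}$ is a nonempty open subset of $F_{\mathrm{sm}}$.

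It then remains to place a point of $U$ in $V$. The leverage is that $L:=\R\cap V$ is not merely dense in $\R$ but is itself a real closed subfield (because $\R^V$ is real closed and square roots and roots of odd-degree polynomials are computed absolutely). Near $\bar x_1$ the smooth locus $F_{\mathrm{sm}}$ is, by the semialgebraic implicit function theorem applied to $V$-definable generators of $F$, the graph over a $(\dim F)$-dimensional coordinate box of a $V$-definable semialgebraic map; since $L\models\mathrm{RCF}$, such a map sends $L$-points to $L$-points, so the $V$-points of $F$ are Euclidean-dense in $F_{\mathrm{sm}}$. Hence the nonempty open set $U\cap F_{\mathrm{sm}}$ contains some $\bar x_1'\in L^n\subseteq V$, and for this $\bar x_1'$ we get $p_i(\bar x_1')=0$, $q_j(\bar x_1')\ne0$ and $r_k(\bar x_1')>0$ for all $i,j,k$, so $\R\models\phi(\bar x_0,\bar x_1')$.

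The main obstacle is precisely the last step: items (1) and (2) only require extracting \emph{some} $V$-point of a $V$-coded closed set, whereas in item (3) the strict inequalities force one to locate a $V$-point inside the \emph{semialgebraic} set $U$ rather than merely inside the variety $F$. Both ingredients that resolve this are essential --- that $\bar x_1$, being a generic point of $F$, is a smooth point around which $U$ is open, and that $\R\cap V$ is a dense \emph{real closed} subfield of $\R$, so that $V$-points accumulate at $\bar x_1$ along the local semialgebraic parametrization of $F_{\mathrm{sm}}$.
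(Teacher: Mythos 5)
Your items (1) and (2) follow the paper's own proofs essentially verbatim (diagonal subbasis for (1); the polynomial-vanishing subbasis plus Mostowski absoluteness of nonemptiness for (2)), and both are correct; you are in fact slightly more explicit than the paper about why a $V$-point of the $V$-coded set $C(V,\{\bar x_1\})$ exists.

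Item (3) is where you diverge, and your route, while I believe it can be made to work, is far heavier than the paper's. The paper handles the strict inequalities with two elementary devices: it collects the polynomials $p_i$ that \emph{do} vanish at $(\bar x_0,\bar x_1)$ into the single polynomial $q=\sum_i p_i^2$, whose zero set gives the Noetherian subbasis; and it chooses a \emph{rational} open box $O\ni\bar x_1$ on which each non-vanishing $p_i(\bar x_0,\cdot)$ keeps a constant sign. Then $O\cap C(V,\{\bar x_1\})$ is a nonempty set coded in $V$, so Mostowski absoluteness hands over a $V$-point $\bar x_1'$, which satisfies the same atomic formulas as $\bar x_1$. Note that this absoluteness argument already proves exactly the fact your whole second half is aimed at: $V$-points of $F=C(V,\{\bar x_1\})$ accumulate at $\bar x_1$ in the Euclidean topology, and since your set $U$ is a relative neighbourhood of $\bar x_1$ in $F$, it contains a rational box intersected with $F$ and hence a $V$-point. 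Your detour through irreducibility, generic smoothness, the semialgebraic implicit function theorem, and density of points of the real closed subfield $L=\R\cap V$ is therefore doing no essential work, and it is also the least rigorous part of your writeup: the claim that the local graph map ``sends $L$-points to $L$-points'' needs an argument (e.g.\ that an isolated real solution of a system over $L$ is algebraic over $L$, hence lies in $L$ because $L$ is real closed), and the identification of the real singular locus and the branch-selection in the implicit function theorem are asserted rather than proved. If you want a transfer-style argument instead of absoluteness, the cheap version is that $L\preceq\R$ by model completeness of RCF together with density of $\Q\subseteq L$, which gives density of $L$-points in any $L$-definable set with no smoothness considerations at all.
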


\begin{proof}
For (1), let $X=Y=\cantor$ and let $C\subset Y\times X$ be the diagonal; this is a Noetherian basis. Now, suppose that $x\in X\cap V[G_0]\cap V[G_1]$ is a point. By the Noetherian assumption,  $C(V[G_0], \{x\})=C(V, \{x\})=\{x\}$. Since the set $C(V, \{x\})$ is coded in $V$, this is possible only if $x\in V$.

For (2), let $n_0=|\bar v_0|$ and $n_1=|\bar v_1|$. Consider the closed set $C\subset X^{n_0}\times X^{n_1}$ consisting exactly the pairs $\langle \bar y_0, \bar y_1\rangle$ if $p(\bar y_0, \bar y_1)=0$; this is a Noetherian subbasis by the Hilbert Basis Theorem. The Noetherian assumption shows that $C(V[G_0], \{\bar x_1\})=C(V, \{\bar x_1\})$. Any tuple $x'_1$ in this set in the ground model will work as in (2), since $C(V[G_0], \bar x_1)\subseteq\{\bar y_1\colon p(\bar x_0, \bar y_1)=0\}$.

For (3), let $n_0=|\bar v_0|$ and $n_1=|\bar v_1|$. Use the quantifier elimination theorem for real closed fields \cite[Theorem 3.3.15]{marker:book} to assume that $\phi$ is quantifier-free. Then $\phi$ is a boolean combination of statements of the form $p(\bar v_0, \bar v_1)>0$ and $p(\bar v_0, \bar v_1)=0$ for some polynomials $p$ with coefficients in $V$. Let $p_i$ for $i\in j$ be a list of all polynomials used in this boolean combination. Let $a\subset j$ be the set of all indices $i$ such that $p_i(\bar x_0, \bar x_1)=0$ and let $q(\bar v_0, \bar v_1)=\gS_{i\in a}p_i(\bar v_0, \bar v_1)^2$. Consider the set $C\subset\mathbb{R}^{n_0}\times\mathbb{R}^{n_1}$ consisting of all pairs $\langle\bar y_0, \bar y_1\rangle$ such that $q(\bar y_0, \bar y_1)=0$. $C$ is a Noetherian subbasis by the Hilbert Basis Theorem, and by the Noetherian assumption on the models $V[G_0], V[G_1]$, $C(V[G_0],\bar x_1)=C(V, \bar x_1)$. Let $O\subset \mathbb{R}^{n_1}$ be a rational open box containing $\bar x_0$ such that for each $i\in j\setminus a$, the values $p_i(\bar x_0, \bar y_1)$ have the same sign for all $\bar y_1\in O$. By a Mostowski absoluteness argument, there must be a point $\bar x'_1\in O\cap C(V, \bar x_1)$ in $V$ since there is such  a point $\bar x_1$ in $V[G_1]$. The point $\bar x'_1$ works as required. 
\end{proof}

\noindent It may seem difficult to verify that given two generic extensions are mutually Noetherian. In this paper, this is always done using the following \emph{duplication criterion}.

\begin{definition}
\label{duplicationdefinition}
Suppose that $P$ is a poset, $\tau_0, \tau_1$ are $P$-names for subsets of the ground model. Say that $\tau_0$ is \emph{duplicable over} $\tau_1$ in $P$ if for every condition $p\in P$ there is a generic extension $V[K]$ and in it, a sequence $\langle H_\ga\colon\ga\in\kappa\rangle$ such that

\begin{enumerate}
\item $\kappa$ is an uncountable ordinal in $V[K]$;
\item each $H_\ga\subset P$ is a filter generic over the ground model containing the condition $p$;
\item $\tau_1/H_\ga$ is the same for all $\ga\in\kappa$;
\item for disjoint finite sets $a, b\subset\kappa$, $V[\tau_0/H_\ga\colon \ga\in a]\cap V[\tau_0/H_\ga\colon \ga\in b]=V$.
\end{enumerate}
\end{definition}

\begin{proposition}
\label{duplicationproposition}
Suppose that $P$ is a poset, $\tau_0, \tau_1$ are $P$-names for subsets of the ground model, and $\tau_0$ is duplicable over $\tau_1$. Then $P$ forces $V[\tau_0]$ to be Noetherian to $V[\tau_1]$ over $V$.
\end{proposition}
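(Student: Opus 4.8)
The plan is to argue by contradiction, using the duplicability of $\tau_0$ over $\tau_1$ to realize the putative failure simultaneously in uncountably many mutually $V$-independent generic extensions, so that the closed set witnessing the failure is pinned into $V$.

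Suppose $P$ does not force $V[\tau_0]$ to be Noetherian to $V[\tau_1]$ over $V$. Fix a condition $q$, a Noetherian subbasis $C\subseteq Y\times X$ coded in $V$, a $P$-name $\dot A$ for a subset of $X$ with $q\Vdash\dot A\in V[\tau_0/\dot G]$, and a $P$-name $\dot z$ for an element of $Y$ with $q\Vdash\dot z\in V[\tau_1/\dot G]$ and $q\Vdash\dot A\subseteq C_{\dot z}$, such that $q\Vdash C(V,\dot A)\not\subseteq C_{\dot z}$. I would first normalize, replacing $\dot A$ and $\dot z$ by names whose evaluations are determined by $\tau_0/H$ and by $\tau_1/H$ respectively, for generic $H\ni q$. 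Since $C_{\dot z}$ is closed and the basic open subsets of $X$ form a countable family, strengthen $q$ to decide a basic open $O\subseteq X$ coded in $V$ with $q\Vdash C(V,\dot A)\cap O\neq\emptyset$ and $q\Vdash O\cap C_{\dot z}=\emptyset$. By Proposition~\ref{basicproposition} and the remark after Definition~\ref{extdefinition}, $C(V,\dot A)$ is always a $V$-coded closed set; strengthen $q$ further to decide it as a fixed such set $D^*$ — possible because, by Definition~\ref{subbasisdefinition}, the possible values admit no infinite strictly $\subseteq$-descending chain. Fix $x\in O\cap D^*$ with $x\in V$. Finally note $q\Vdash\dot z\notin V$: were $\dot z/G\in V$, then $C(V,\dot A/G)\subseteq C_{\dot z/G}$ straight from Definition~\ref{extdefinition}.

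Now invoke duplicability of $\tau_0$ over $\tau_1$ at $q$: in some generic extension $V[K]$ there is $\langle H_\alpha:\alpha<\kappa\rangle$ as in Definition~\ref{duplicationdefinition}. Write $t^*=\tau_1/H_\alpha$ and $z^*=\dot z/H_\alpha$ (both independent of $\alpha$, using the normalization) and $A_\alpha=\dot A/H_\alpha\in V[\tau_0/H_\alpha]$. Then in $V[K]$ we have, for every $\alpha<\kappa$, that $A_\alpha\subseteq C_{z^*}$ and $C(V,A_\alpha)=D^*$, while $x\in D^*\setminus C_{z^*}$. Hence $\bigcup_{\alpha<\kappa}A_\alpha\subseteq C_{z^*}$, and since this union contains $A_0$ and is contained in $D^*$ — itself a finite intersection of vertical sections indexed in $V$ — we get $C(V,\bigcup_{\alpha<\kappa}A_\alpha)=D^*$. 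In particular $x\in C(V,\bigcup_{\alpha<\kappa}A_\alpha)$, so no vertical section indexed in $V$ both contains $\bigcup_{\alpha<\kappa}A_\alpha$ and omits $x$.

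The remaining step — which I expect to be the main obstacle — is to contradict this using clause (4) of Definition~\ref{duplicationdefinition}. By Proposition~\ref{basicproposition} applied in $V[t^*]$, the set $C(V[t^*],\bigcup_{\alpha<\kappa}A_\alpha)$ is a finite intersection $\bigcap_{v\in F}C_v$ with $F\subseteq Y\cap V[t^*]$ and $\bigcup_{\alpha<\kappa}A_\alpha\subseteq C_v$ for each $v\in F$; it is contained in $C_{z^*}$, hence omits $x$, so $x\notin C_{v_0}$ for some $v_0\in F$, necessarily with $v_0\notin V$. The plan is then to rule out such a $v_0$: each real of $\bigcup_{\alpha<\kappa}A_\alpha$ enters through one single model $V[\tau_0/H_\alpha]$, and these are mutually $V$-independent, so a closed set that separates the whole union from $x$ is recoverable from a finite collection of them; recovering it also from a disjoint finite collection — available since $\kappa$ is uncountable — forces, via clause (4), the relevant section to be coded in $V$, contradicting the last sentence of the previous paragraph. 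The delicate point is exactly this last inference — that clause (4), which constrains only the $\tau_0$-extensions, suffices to bring the separating section down to $V$ rather than merely to $V[t^*]$; the steps preceding it are routine forcing and Mostowski-absoluteness manipulations.
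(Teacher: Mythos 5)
Your overall strategy --- argue by contradiction, use duplicability to realize the putative failure $\kappa$ many times over a fixed $\tau_1$-side, and invoke clause (4) of Definition~\ref{duplicationdefinition} to pin the relevant closed set into $V$ --- is the same as the paper's, and your first two paragraphs (the normalizations, the choice of $O$, $D^*$ and $x$, and the identity $C(V,\bigcup_\alpha A_\alpha)=D^*$) are essentially sound. But the step you flag as ``the main obstacle'' is not a technicality to be filled in later: it is the entire content of the proposition, and in the configuration you chose it does not close. You must rule out an index $v_0\in Y\cap V[t^*]\setminus V$ with $\bigcup_\alpha A_\alpha\subseteq C_{v_0}$ and $x\notin C_{v_0}$. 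Clause (4) constrains only the models $V[\tau_0/H_\gamma:\gamma\in a]$ and says nothing about $V[t^*]=V[\tau_1/H_\alpha]$, where $v_0$ lives. Your proposed repair (recover a separating section from two disjoint finite collections of the $\tau_0$-models) stalls: the decreasing net $a\mapsto C(V[\tau_0/H_\gamma:\gamma\in a],\bigcup_\alpha A_\alpha)$ does stabilize, but only on the final segment of finite sets containing some fixed $a_0$; nothing guarantees the stable value is also attained at some $b$ disjoint from $a_0$, so you never obtain the two independent representations that clause (4) requires.

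The paper unfolds the failure in the transposed configuration, and that is what makes the argument work. It fixes a name $\sigma$ for a subset of $X$ computed from $\tau_1$ --- hence a single set $A$ common to all $V[H_\alpha]$ by clause (3) --- and considers the hulls $C_\alpha=C(V[\tau_0/H_\alpha],A)$, which vary with $\alpha$ and are finite intersections of sections indexed in exactly the models clause (4) controls. The increasing sequence $D_\beta=\bigcap_{\alpha\geq\beta}C_\alpha$ stabilizes at some $D$, which by Proposition~\ref{basicproposition} is a finite intersection of the $C_\alpha$ with indices drawn from an arbitrarily late tail; taking two disjoint finite index sets $b_0,b_1$ pins $D$ into $V$, and then $A\subseteq D\subseteq C_\alpha$ yields $D=C_\alpha=C(V,A)$. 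Note that this proves the implication with the roles of $\tau_0,\tau_1$ in the definition of ``Noetherian to'' interchanged relative to the literal statement (your reading is the literal one); since every application establishes mutual duplicability and concludes mutual Noetherianness, the discrepancy is harmless in the paper, but it is precisely what steered you into the configuration where the duplication hypothesis gives no leverage. The fix is to restart from the paper's configuration, not to try to complete your final step.
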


\begin{proof}
Suppose towards a contradiction that the conclusion fails. Then there must be a condition $p\in P$, Polish spaces $X, Y$, a Noetherian subbasis $C\subset Y\times X$ coded in $V$, and a name $\gs$ for a subset of $X$ in the model $V[\tau_1]$ such that $p$ forces $C(V[\tau_0], \gs)$ to be strictly smaller than $C(V, \gs)$. Basic forcing theory shows that $V[\tau_1]$ is a generic extension of the ground model, and that we may assume that there is a poset $Q_1$ such that $p\Vdash\tau_1\subset \check Q_1$ is a filter generic over the ground model. We also may assume that $\gs$ is in fact a $Q_1$-name.

Move to a generic extension $V[K]$ in which a sequence $\langle H_\ga\colon\ga\in\kappa\rangle$ satisfies the items of Definition~\ref{duplicationdefinition}. Write $A\subset X$ for the common value of $\gs/(\tau_1/H_\ga)$ for all ordinals $\ga\in\kappa$ and write $C_\ga=C(V[H_\ga], A)$.  For each ordinal $\gb\in\gw_1$ let $D_\gb=\bigcap_{\ga\in\gw_1\setminus\gb}C_\gb$. The sequence $\langle D_\gb\colon \gb\in\gw_1\rangle$ is an inclusion-increasing uncountable sequence of closed subsets of $X$, and as such it has to stabilize at some ordinal $\gb_0$. Write $D\subset X$ for the stable value. Use the Noetherian property of the subbasis $C$ to find  a finite set $b_0\subset\gw_1\setminus\gb_0$ such that $D=\bigcap_{\ga\in b_0}C_\ga$. Let $\gb_1\in\gw_1$ be an ordinal larger than $\max(b_0)$ and find a finite set $b_1\subset\gw_1\setminus\gb_1$ such that $D=\bigcap_{\ga\in b_1}C_\ga$. By the intersection assumption on the sequence of the generic extensions, $V[G_\ga\colon\ga\in b_0]\cap V[G_\ga\colon\ga\in\gb_1]=V$ must hold. Thus, the set $D$ is in fact coded in $V$. Now, for every ordinal $\ga\in\gw_1\setminus\gb_0$ it is the case that $A\subseteq D\subseteq C_\ga$. The definition of the set $C_\ga$ then shows that $D=C_\ga$ and the proposition follows.
\end{proof}

\section{Examples I}
\label{example1section}

This section contains several example of mutually duplicable names in forcing, which by Proposition~\ref{duplicationproposition} always lead to mutually Noetherian pairs of extensions. First, a warm-up example.

\begin{example}
If $Q_0, Q_1$ are posets with the respective names $\tau_0, \tau_1$ for their generic filters and $P=Q_0\times Q_1$, then $\tau_0, \tau_1$ are mutually duplicable in $P$. To see that $\tau_0$ is duplicable over $\tau_1$, let $\kappa$ be the successor of the maximum of $|Q_0|$ and $|Q_1|$. For every condition $p=\langle q_0, q_1\rangle\in P$, consider the product forcing of $\kappa$-many copies of $Q_0\restriction q_0$ and a single copy of $Q_1\restriction q_1$, yielding filters $G_0^\ga\subset Q_0$ and $G_1\subset Q_1$. Let $H_\ga=G_0^\ga\times H_1$ and observe that for finite disjoint sets $a, b\subset\kappa$, the intersection $V[G_0^\ga\colon \ga\in a]\cap V[G_0^\ga\colon \ga\in b]$ is equal to $V$ by the product forcing theorem.
\end{example}

\noindent The following two classes of examples deal with Cohen forcing. Recall that if $X$ is a Polish space then $P_X$ is the poset of nonempty open subsets of $X$ ordered by inclusion; it adds a single \emph{Cohen generic} point $\dotxgen$ which belongs to all sets in the generic filter. If $f\colon X\to Y$ is a continuous open map from $X$ to $Y$ then $P_X$ forces $\dot f(\dotxgen)$ to be $P_Y$ generic over the ground model by \cite[Proposition 3.1.1]{z:geometric}.

The first Cohen forcing example deals with product graphs. If $X_n$ for $n\in\gw$ are countable sets and $\Gamma_n$ are graphs on each, then the product $\prod_n\Gamma_n$ is the graph on $X=\prod_nX_n$ connecting sequences $x_0, x_1$ if for every $n\in\gw$, $x_0(n)\mathrel\Gamma_n x_1(n)$ holds. 

\begin{example}
Let $\Gamma_n$ be graphs on nonempty countable sets $X_n$, each of which is connected and not bipartite. Let $X=\prod_nX_n$ and $\Gamma=\prod_nX_n$. The poset $P_\Gamma$ adds a generic pair $\langle\tau_0, \tau_1\rangle\in\Gamma$. The names $\tau_0, \tau_1$ are mutually duplicable.
\end{example}

\noindent Note that the assumptions on the graphs $\Gamma_n$ are best possible. If they are not connected, then the sequence of the $\Gamma_n$-components of $\tau_0(n)$ belongs to both $V[\tau_0]$ and $V[\tau_1]$ and not to $V$. If the graphs $\Gamma_n$ are bipartite as witnessed by partitions $X_n=A_{n0}\cup A_{n1}$ for each $n\in\gw$, then the binary sequence $y\in\cantor$ defined by $y(n)=i$ if $\tau_0(n)\in A_{ni}$ belongs to $V[\tau_0]\cap V[\tau_1]$ (the same definition applied to $\tau_1$ yields the sequence $1-y$) but not to $V$. In both cases, the conclusion shows that the models $V[\tau_0]$ and $V[\tau_1]$ are forced not to be mutually Noetherian by Proposition~\ref{ahproposition}(1).

\begin{proof}
Let $P$ be the combinatorial presentation of the poset $P_X$: its conditions are finite sequences $p$ such that $\forall n\in\dom(p)$, $p(n)\in X_n$, ordered by reverse inclusion. The following technical claim uses the full strength of initial assumptions on the graphs:

\begin{claim}
\label{technical}
Let $p\in P$ and let $\gs$ be a $P$-name for a set of ordinals not in the ground model. Then there is an ordinal $\ga$ and conditions $p_0, p_1\leq p$ such that

\begin{enumerate}
\item $\dom(p_0)=\dom(p_1)$, there is a unique $n\in\dom(p_0)$ such that $p_0(n)\neq p_1(n)$, and there is $x\in X_n$ $\Gamma_n$ related to both $p_0(n)$ and $p_1(n)$;
\item $p_0\Vdash\check\ga\in\gs$ and $p_1\Vdash\check\ga\notin\gs$.
\end{enumerate}
\end{claim}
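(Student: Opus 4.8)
The plan is to build the pair $p_0, p_1$ by first locating a single node where the name $\gs$ is undecided, and then "walking" that disagreement into a single coordinate in a controlled way using the structure of the graphs $\Gamma_n$. Since $\gs$ names a set of ordinals not in the ground model, there is some ordinal $\ga$ and conditions $q_0, q_1 \leq p$ with $q_0 \Vdash \check\ga \in \gs$ and $q_1 \Vdash \check\ga \notin \gs$. By extending, I may assume $q_0, q_1$ have the same domain, say $N = \dom(q_0) = \dom(q_1)$, and $q_0, q_1$ extend $p$. Let $F = \{n < N : q_0(n) \neq q_1(n)\}$; this is a finite set, and the goal is to reduce $|F|$ to $1$ while preserving a $\Gamma_n$-common neighbor at the surviving coordinate. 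The essential point will be that I can replace $q_0, q_1$ by a chain of intermediate conditions that differ from their neighbors in exactly one coordinate, along which the decision about $\check\ga \in \gs$ must flip somewhere; non-bipartiteness is what makes the two endpoints of such an edge have a common neighbor.

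First I would set up the walk. For each $n \in F$, use that $\Gamma_n$ is connected and not bipartite: there is a walk in $\Gamma_n$ from $q_0(n)$ to $q_1(n)$ of \emph{even} length, and also one of odd length (combine a path with an odd cycle reached from it), so in particular for each $n \in F$ I can fix a walk $n \mapsto (z^n_0 = q_0(n), z^n_1, \dots, z^n_{\ell} = q_1(n))$, all of the same length $\ell$ (pad using back-and-forth moves along a single edge if needed, which is possible precisely because a non-bipartite connected graph has closed walks of every sufficiently large length). Now enumerate the finitely many steps of changing one coordinate of the walk at a time, sweeping coordinate $n$ (for $n \in F$) from its value in $q_0$ to its value in $q_1$ one graph-step at a time, one coordinate at a time. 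This yields a finite chain of conditions $r_0 = q_0, r_1, \dots, r_m = q_1$, all with domain $N$, where consecutive $r_i, r_{i+1}$ agree except at one coordinate $n_i \in F$, and at that coordinate $r_i(n_i) \mathrel{\Gamma_{n_i}} r_{i+1}(n_i)$ — indeed $r_i(n_i)$ and $r_{i+1}(n_i)$ are $\Gamma_{n_i}$-adjacent endpoints of one walk-step.

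Next, since $r_0 \Vdash \check\ga \in \gs$ and $r_m \Vdash \check\ga \notin \gs$, there is a first index $i$ along the chain such that $r_i$ and $r_{i+1}$ do not force the same thing about $\check\ga \in \gs$; extending $r_i$ and $r_{i+1}$ if necessary (and mirroring the extension on the other side via the adjacency at coordinate $n_i$, which is legitimate since extensions at coordinates other than $n_i$ can be copied verbatim to both) I get $p_0, p_1 \leq p$ with $\dom(p_0) = \dom(p_1)$, differing exactly at the single coordinate $n = n_i$, with $p_0(n) \mathrel{\Gamma_n} p_1(n)$... but this is not quite item~(1), which demands a \emph{common} neighbor $x$, not that $p_0(n), p_1(n)$ be adjacent to each other. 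Here I use non-bipartiteness again: I should engineer the walk-steps to be of the form "go to a common neighbor and come back," i.e., replace each single graph-edge move $u \to v$ by the two-step move $u \to x \to v$ where $x$ is any vertex adjacent to both (such $x$ exists because in a connected non-bipartite graph any two vertices have a common neighbor, after possibly lengthening the walk through an odd cycle). Rephrasing the chain so that every single-coordinate change replaces $u$ by $x$ or $x$ by $v$ with $x$ a common neighbor of $u$ and $v$: then at the flipping step, $p_0(n)$ and $p_1(n)$ are precisely $u$ and $v$ (or $u$ and $x$, etc.), and in every case there is a vertex $\Gamma_n$-related to both. I expect the main obstacle to be exactly this bookkeeping — ensuring the intermediate conditions can be chosen so that the single disagreeing coordinate at the flipping step has a genuine common $\Gamma_n$-neighbor rather than mere adjacency — and this is where the "connected and not bipartite" hypothesis is used in full (as the remark after the claim confirms, both hypotheses are necessary). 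The rest is routine density/extension arguments in the Cohen poset $P_X$.
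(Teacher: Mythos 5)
Your overall strategy (walk the disagreement between two deciding conditions into a single coordinate along a chain of one-coordinate changes, then locate the step where the decision about $\check\ga\in\gs$ flips) is a genuinely different route from the paper's. The paper first produces two conditions differing at exactly one entry $n$ by rewriting initial segments of a single sufficiently strong condition, with no graph structure involved; only then does it bring in the graphs, by two-coloring $X_n$ according to the forced decision, using non-bipartiteness to find a monochromatic edge and connectedness to find a differently colored neighbor of its component, which yields a path $x_l\mathrel{\Gamma_n}x_k\mathrel{\Gamma_n}x_j$ with $x_l, x_j$ deciding oppositely and $x_k$ their common neighbor. Your chain idea could work too, but as written it has a genuine gap at exactly the step you identify as the main obstacle: you justify replacing an edge move $u\to v$ by $u\to x\to v$ with the assertion that in a connected non-bipartite graph any two vertices have a common neighbor. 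That is false --- in $C_5$, or in any triangle-free non-bipartite graph, adjacent vertices never have a common neighbor, so the literal construction breaks down, and ``lengthening the walk through an odd cycle'' does not produce a single common neighbor for a given pair. What non-bipartiteness plus connectedness actually gives is a walk of \emph{even} length from $q_0(n)$ to $q_1(n)$; cutting that walk at its even positions yields a sequence of vertices in which consecutive terms are either equal or joined by a length-two walk, hence \emph{do} have a common neighbor. Your chain must step through those even positions (not through single edges), and with that repair the argument goes through.

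A secondary, fixable issue is the order of operations when locating the flip: the conditions $r_i$ in your chain need not decide $\check\ga\in\gs$ at all, so ``the first index where $r_i$ and $r_{i+1}$ do not force the same thing'' is not yet meaningful. You need to first extend all conditions in the (finite) chain simultaneously --- extending one and copying the new entries to the others, which is legitimate since extensions only add coordinates beyond the common domain --- until every $r_i$ decides the statement, and only then read off the flipping step. This is precisely the ``strengthening repeatedly'' device in the paper's proof; you mention the copying trick but deploy it after the flip has supposedly been found rather than before.
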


\begin{proof}
First, since $\gs$ is forced not to be in the ground model, there must be an ordinal $\ga$ and conditions $p_0^0, p_0^1\leq p$ with the same domain $m\in\gw$ such that $p_0^0\Vdash\check\ga\in\gs$ and $p_0^1\Vdash\check\ga\notin\gs$. The next move is to find such two conditions with the same domain and so that they differ in exactly one entry.

For this, for each condition $q\leq p_0^0$ and every number $k\leq m$, write $q_n$ for the condition obtained from $q$ by rewriting the entry $q(k)$ by $p_0^1(k)$ for every $k\in n$. Strengthening repeatedly the condition $p_0^0$, find $q\leq p_0^0$ such that for every number $n\leq m$ the condition $q_n$ decides the statement $\check\ga\in\gs$. Since $q_0\Vdash\check\ga\in\gs$ and $q_m\Vdash\check\ga\notin\gs$, there must be a number $n\in m$ such that $q_n\Vdash\check \ga\in\gs$ and $q_{n+1}\Vdash\check\ga\notin\gs$. The conditions $q_n$ and $q_{n+1}$ are as required. 

Thus, there must be an ordinal $\ga$, a number $n\in\gw$, and conditions $p_0^0, p_0^1\leq p$ with the same domain $m\in\gw$ such that $p_0^0\Vdash\check\ga\in\gs$ and $p_0^1\Vdash\check\ga\notin\gs$ and $p_0^0, p_0^1$ differ only at entry $n$. Now, let $\{x_i\colon i\in\gw\}$ be an enumeration of $X_n$ with possible repetitions. By recursion on $i\in\gw$ build conditions $q_i\in P$ such that

\begin{itemize}
\item $p_0^0=q_0\geq q_1\geq\dots$;
\item writing $q'_i$ for the condition obtained from $q_i$ by replacing its $n$-th entry with $x_i$, $q'_i$ decides the membership of $\ga$ in $\gs$.
\end{itemize}

\noindent Let $a=\{x_i\colon q'_i\Vdash\check\ga\in\gs\}$ and $b=\{x_i\colon q'_i\Vdash\check\ga\notin\gs\}$. This is a partition of $X_n$ into two nonempty sets by the choice of the condition $p_0^0$. Since the graph $\Gamma_n$ is not bipartite, one of them (say $a$) must contain an edge. Let $c\subseteq a$ be a maximal $\Gamma_n$-connected component of $a$ containing more than one element. Since the graph $\Gamma_n$ is connected, there must be a point $x_j$ which a $\Gamma_n$-neighbor of $c$, not in $c$ itself. Then $x_j\in b$ and there must be elements $x_k, x_l\in c$ such that $x_l\mathrel\Gamma_n x_k\mathrel\Gamma_n x_j$.

Let $i=\max\{j, l\}$ and consider the conditions $p_0, p_1$ obtained from $q_i$ by replacing its $n$-th entry with $x_l$ and $x_j$ respectively. The conditions $p_0, p_1$ and the ordinal $\ga$ are as required.
\end{proof}

Now, let $a$ be a finite set, and consider the posets $Q_a$ and $R_a$. $Q_a$ is a set of tuples $q$ such that for some $m=m_q$, $q\colon a\to P$ is a function and for every $i\in a$, $\dom(q(i))=m$, and for every $n\in m$ there is a point $x\in X_n$ which is $\Gamma_n$ related to all points $q(i)(n)$ for all $i\in a$. The ordering is coordinatewise inclusion. The poset $Q_a$ adds a generic point in $X^a$ which is the coordinatewise union of the conditions in the generic filter. $R_a$ is the collection of tuples $r=\langle p_r, q_r\rangle$ where $p_r\in P$ is a tuple with domain $m_r$, $q_r\in Q_a$ is a condition with $m_{q_r}=m_r$, and for each $n\in m_r$ and each $i\in a$, $p_r(n)\mathrel\Gamma_n q_r(i)(n)$ holds. The ordering in $r$ is coordinatewise again. The poset $R_a$ adds points $\tau\in X$ and $\gs_a\in X^a$ which are the union of first and coordinatewise union of second conditions in the generic filters respectively.

\begin{claim}
\label{utclaim}
$R_a$ forces the following:

\begin{enumerate}
\item $\tau$ is $P_X$-generic over the ground model and $\gs_a$ is $Q_a$-generic over the ground model;
\item if $b\subset a$ then $\langle\tau, \gs_a\restriction b\rangle$ is $R_b$-generic over the ground model;
\item $V[\tau]\cap V[\gs_a]=V$.
\end{enumerate}
\end{claim}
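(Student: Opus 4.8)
Each item will follow from exhibiting an appropriate projection map, with item (3) additionally invoking Claim~\ref{technical}.

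For (1) and (2) the relevant maps are the coordinate projections $\pi_0\colon R_a\to P$, $\langle p_r,q_r\rangle\mapsto p_r$, and $\pi_1\colon R_a\to Q_a$, $\langle p_r,q_r\rangle\mapsto q_r$, and, for $b\subseteq a$, the map $\pi_b\colon R_a\to R_b$, $\langle p_r,q_r\rangle\mapsto\langle p_r,q_r\restriction b\rangle$. All three are visibly order-preserving and land where claimed (for $\pi_b$, the graph compatibility demanded of an $R_b$-condition is a special case of that demanded of an $R_a$-condition), so it remains to check each is a projection. For $\pi_0$: given $p''\leq p_r$, extend $q_r$ on the new coordinates $n\in[\dom(p_r),\dom(p''))$ by choosing an arbitrary $\Gamma_n$-neighbor of $p''(n)$ --- such a neighbor exists since $\Gamma_n$ is connected with an edge, hence has no isolated vertex --- obtaining $\langle p'',q''\rangle\in R_a$ with $\pi_0(\langle p'',q''\rangle)=p''$. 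For $\pi_1$: symmetrically, given $q''\leq q_r$, extend $p_r$ on the new coordinates by choosing at each such $n$ a point of $X_n$ that is $\Gamma_n$-related to all the $q''(i)(n)$; such a point exists by the very definition of $Q_a$. For $\pi_b$: given $\langle p_s,q_s\rangle\leq\pi_b(\langle p_r,q_r\rangle)$ in $R_b$, define $q'_r\colon a\to P$ by $q'_r(i)=q_s(i)$ for $i\in b$ and, for $i\in a\setminus b$, by extending $q_r(i)$ on the new coordinates with $\Gamma_n$-neighbors of $p_s(n)$; one checks routinely that $p_s(n)$ witnesses $q'_r\in Q_a$ at every coordinate, that $\langle p_s,q'_r\rangle\in R_a$ refines $\langle p_r,q_r\rangle$, and that $\pi_b(\langle p_s,q'_r\rangle)=\langle p_s,q_s\rangle$. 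Since the generic objects $\tau$ and $\gs_a$ (and $\langle\tau,\gs_a\restriction b\rangle$) are read off precisely from the $\pi_0$-, $\pi_1$- (and $\pi_b$-) images of the $R_a$-generic filter, items (1) and (2) follow.

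For (3), suppose $V[\tau]\cap V[\gs_a]\neq V$. Since by (1) these are the $P_X$- and $Q_a$-extensions of $V$, a set of ordinals $z$ witnessing this has a $P$-name $\mu_0$ and a $Q_a$-name $\mu_1$, and some $r=\langle p',q_{r'}\rangle\in R_a$ forces $\mu_0=\mu_1\notin V$. A pull-back through $\pi_0$ shows that $p'$ already forces in $P$ that $\mu_0\notin V$, so Claim~\ref{technical} applies to $p'$ and $\mu_0$ and yields an ordinal $\ga$, a coordinate $n$, conditions $p_0,p_1\leq p'$ agreeing off $n$, and a common $\Gamma_n$-neighbor $x$ of $p_0(n)$ and $p_1(n)$, with $p_0\Vdash_P\ga\in\mu_0$ and $p_1\Vdash_P\ga\notin\mu_0$. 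Now extend $q_{r'}$ to $q_0\colon a\to P$ of domain $\dom(p_0)$ by setting $q_0(i)(n)=x$ for every $i\in a$ and choosing $\Gamma_{n'}$-neighbors of $p_0(n')=p_1(n')$ on the remaining new coordinates $n'$. Both $\langle p_0,q_0\rangle$ and $\langle p_1,q_0\rangle$ then lie in $R_a$ and refine $r$ --- this is exactly where the common neighbor $x$, and hence the connectedness and non-bipartiteness of $\Gamma_n$, is used --- so the first forces $\ga\in\mu_1$ and the second forces $\ga\notin\mu_1$. Finally one proves the auxiliary fact that if $\langle p,q\rangle\Vdash_{R_a}\ga\in\mu_1$ for a $Q_a$-name $\mu_1$, then already $q\Vdash_{Q_a}\ga\in\mu_1$: any $Q_a$-extension of $q$ forcing $\ga\notin\mu_1$ would, after the usual common-neighbor completion to an $R_a$-condition below $\langle p,q\rangle$, contradict $\langle p,q\rangle\Vdash\ga\in\mu_1$. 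Applying this to $\langle p_0,q_0\rangle$ and $\langle p_1,q_0\rangle$ gives $q_0\Vdash_{Q_a}\ga\in\mu_1$ and $q_0\Vdash_{Q_a}\ga\notin\mu_1$, a contradiction.

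The verifications for (1) and (2) are routine bookkeeping; the substance is in (3), and the main obstacle there is to organize the amalgamation so that the two forcing conditions extracted from Claim~\ref{technical} share a single $Q_a$-part --- this is precisely what the common $\Gamma_n$-neighbor delivers, and it is why the graph hypotheses cannot be dropped. One should also be mildly careful that it suffices to treat $z$ a set of ordinals and that, by (1), $V[\tau]$ and $V[\gs_a]$ genuinely are the $P_X$- and $Q_a$-extensions.
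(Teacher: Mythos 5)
Your proof is correct and follows essentially the same route as the paper: items (1) and (2) are the "elementary density arguments" the paper alludes to, and item (3) is exactly the paper's use of Claim~\ref{technical} together with the common $\Gamma_n$-neighbor $x$ to build one $Q_a$-part sitting over both $p_0$ and $p_1$. The only cosmetic difference is that the paper extends that $Q_a$-part to decide $\check\ga\in\eta$ and then pairs it with the opposite $p_i$, whereas you keep both pairs $\langle p_0,q_0\rangle$, $\langle p_1,q_0\rangle$ and invoke the auxiliary fact that the $Q_a$-coordinate alone decides $Q_a$-statements; these are interchangeable.
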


\begin{proof}
The first two items are elementary density arguments. For the last item, suppose that $r\in R$ is a condition, $\eta$ is a $Q_a$-name for a new set of ordinals and $\theta$ is a $P_X$-name for a new set of ordinals. I must find a condition $s\leq r$ and an ordinal $\ga$ such that $s\Vdash\check\ga\in (\eta/\gs_a)\Delta (\theta/\tau)$.

To this end, first find an ordinal $\ga$ and conditions $p_0, p_1\leq p_r$ in the poset $P$ as in Claim~\ref{technical}. Let $n\in\gw$ be the unique number such that $p_0(n)\neq p_1(n)$, and let $x\in X_n$ be a point which is $\Gamma_n$-related to both $p_0(n)$ and $p_1(n)$.
Let $q'\leq q_r$ be a condition in $Q_a$ such that for each $i\in a$ the domain of $q'(i)$ is equal to that of $p_0$, for each $m\in\dom(q'(i))$ the point $q'_i(m)$ is $\Gamma_m$-related to $p_0(m)$, and $q'_i(n)=x$. Let $q''\leq q'$ be a condition in $Q_a$ deciding the membership of $\ga$ in $\eta$; for definiteness, assume that $q''\Vdash\check\ga\in\eta$. It is not difficult to see that there is a condition $s\leq r$ such that $p_s\leq p_1$ and $q_s=q''$. Such a condition forces $\check\ga\in\eta/\gs_a$ and $\check\ga\notin\theta/\tau$ as desired.
\end{proof}

\noindent Finally, the ground is ready for a quick proof of the example. Let $S$ be the combinatorial presentation of the poset $P_\Gamma$: it consists of pairs $s=\langle u_s, t_s\rangle$ where $u_s, t_s\in P$ have the same domain and for every $n\in\dom(u_s)$, $t_s(n)\mathrel\Gamma_n u_s(n)$. The ordering is that of reverse coordinatewise inclusion. Now, suppose that $s\in S$ is a condition. Let $x_0\in X$ be $P$-generic over the ground model, extending $u_q$. In the model $V[x_0]$, consider the poset $T$ of all conditions $t\in P$ such that $t_q\subset t$ and for all $n\in\dom(t)$, $t(n)\mathrel\Gamma_n x_0(n)$. The ordering is that of inclusion. Force with the finite support product of $\gw_1$ many copies of $T$, resulting in points $x_{1\ga}\in X$ for $\ga\in\gw_1$.

A brief density argument shows that each pair $\langle x_0, x_{1\ga}\rangle$ is $S$-generic over the ground model, and for every finite set $a\subset\gw_1$, the pair $\langle x_0, \langle x_{1\ga}\colon\ga\in a\rangle\rangle$ is $R_a$-generic over the ground model. Let $a, b\subset\gw_1$ be disjoint finite sets. By a mutual genericity argument, $V[x_0][x_{1\ga}\colon \ga\in a]\cap V[x_0][x_{1\ga}\colon\ga\in b]\subseteq V[x_0]$ must hold. Thus, $V[x_{1\ga}\colon \ga\in a]\cap V[x_{1\ga}\colon\ga\in b]\subseteq V[x_0]$. By Claim~\ref{utclaim}(3), $V[x_{1\ga}\colon \ga\in a]\cap V[x_{1\ga}\colon\ga\in b]=V$ as desired.
\end{proof}

\begin{corollary}
\label{graphcorollary}
Let $\Gamma_n$ be graphs on nonempty countable sets $X_n$, each of which is connected and not bipartite. Let $X=\prod_nX_n$ and $\Gamma=\prod_nX_n$. The poset $P_\Gamma$ adds a generic pair $\langle\tau_0, \tau_1\rangle\in\Gamma$. It forces $V[\tau_0]$, $V[\tau_1]$ to be mutually Noetherian extensions.
\end{corollary}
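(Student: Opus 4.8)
The plan is to observe that this corollary is a direct packaging of the preceding example together with Proposition~\ref{duplicationproposition}, so there is essentially nothing new to prove. The preceding example establishes that the canonical names $\tau_0,\tau_1$ for the two coordinates of the generic pair added by $P_\Gamma$ are \emph{mutually} duplicable; this is exactly what is needed to feed into the duplication proposition. First I would record that $P_\Gamma$ (in its combinatorial presentation $S$) is symmetric under exchanging the two coordinates, since $\Gamma$ is a symmetric binary relation and the defining clause $t_s(n)\mathrel\Gamma_n u_s(n)$ is symmetric in $t_s,u_s$. Hence, the argument of the example showing $\tau_0$ is duplicable over $\tau_1$ also shows, with the roles of the two coordinates swapped, that $\tau_1$ is duplicable over $\tau_0$; alternatively one simply quotes the conclusion "$\tau_0,\tau_1$ are mutually duplicable" from the example's statement.

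Next I would invoke Proposition~\ref{duplicationproposition} twice. Applying it to the pair $(\tau_0,\tau_1)$ gives that $P_\Gamma$ forces $V[\tau_0]$ to be Noetherian to $V[\tau_1]$ over $V$; applying it to the pair $(\tau_1,\tau_0)$ gives that $P_\Gamma$ forces $V[\tau_1]$ to be Noetherian to $V[\tau_0]$ over $V$. By the definition of mutual Noetherianity, this is precisely the assertion that $P_\Gamma$ forces $V[\tau_0]$ and $V[\tau_1]$ to be mutually Noetherian extensions of $V$ (inside the ambient extension $V[\langle\tau_0,\tau_1\rangle]$); that $V[\tau_0]$ and $V[\tau_1]$ are genuine intermediate generic extensions of $V$ is automatic, each being generated over $V$ by a point of a ground-model-coded Polish space.

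There is no real obstacle here: the entire combinatorial content — the technical Claim~\ref{technical}, which uses connectedness and non-bipartiteness of each $\Gamma_n$, and Claim~\ref{utclaim}, which secures the intersection clause (4) of Definition~\ref{duplicationdefinition} — already lives in the proof of the example. The only point worth a sentence of care is the symmetry observation that licenses applying the example "in both directions," and the trivial check that $\tau_0,\tau_1$ are names for subsets of the ground model (their combinatorial presentations are literally unions of finite partial functions), so that Proposition~\ref{duplicationproposition} is applicable as stated.
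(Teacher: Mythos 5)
Your proposal is correct and matches the paper exactly: the corollary is stated without a separate proof, being an immediate consequence of the preceding example (mutual duplicability of $\tau_0,\tau_1$, with the second direction supplied by the symmetry of $\Gamma$) together with Proposition~\ref{duplicationproposition} applied in both directions.
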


\noindent The second Cohen forcing example deals with turbulent actions of Polish groups as outlined in \cite[Section 13.1]{kanovei:book}.

\begin{example}
\label{turbulentexample}
Let $\Gamma$ be a Polish group acting turbulently on a Polish space $X$ with all orbits dense and meager. Let  $P=P_\Gamma\times P_X$, let $\dotggen$ be the $P$-name for the $P_\Gamma$-generic point, $\tau_0$ the $P$-name for the $P_X$-generic point, and $\tau_1=\dotggen\cdot\tau_0$. Then $\tau_0, \tau_1$ are mutually duplicable names in $P$.
\end{example}

\begin{proof}
By a symmetry argument, it is enough to show that $\tau_1$ is duplicable over $\tau_1$. Suppose that $p=\langle U, O\rangle$ is a condition in the poset $P$, where $U\subset \Gamma$ and $O\subset X$ are nonempty open sets. Let $\kappa=\gw_1$, let $x\in O$ be a point $P_X$-generic over $V$. Force with a finite support product of $\kappa$-many copies of the poset $P_\Gamma\restriction U$ to obtain a sequence $\langle g_\ga\colon \ga\in\kappa\rangle$ of points in $U$ which are in finite tuples mutually $P_\Gamma$-generic over $V[x]$; write $x_\ga=g_\ga\cdot x$. Each of the filters $H_\ga\subset P$ for $\ga\in\kappa$ given by the pair $\langle g_\ga, x\rangle$ is $P$-generic over $V$; I will show that the sequence $\langle H_\ga\colon\ga\in\kappa\rangle$ witnesses the duplicability of $\tau_1$ over $\tau_0$.

\begin{claim}
Let $a\subset \kappa$ be a finite set. Then $V[x_\ga\colon \ga\in a]\cap V[x]=V$.
\end{claim}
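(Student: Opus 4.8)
The claim asserts that if $a \subset \kappa$ is a finite set, then $V[x_\ga\colon \ga \in a] \cap V[x] = V$, where the $x_\ga = g_\ga \cdot x$ are the translates of the fixed Cohen-generic point $x \in O$ by the mutually $P_\Gamma$-generic group elements $g_\ga$. The plan is to reduce the statement to a mutual-genericity computation performed over $V[x]$ and then exploit the fact that $x$ is Cohen-generic over $V$. First I would observe that each $x_\ga$ lies in $V[x][g_\ga]$, so $V[x_\ga\colon \ga\in a] \subseteq V[x][g_\ga\colon \ga\in a]$; since the $g_\ga$ for $\ga\in a$ are mutually $P_\Gamma$-generic over $V[x]$, the intersection $V[x][g_\ga\colon\ga\in a]\cap V[x]$ is $V[x]$ by the product forcing theorem, which is not yet what we want — we need to descend below $V[x]$. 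The point is that the model $V[x_\ga\colon\ga\in a]$ need not contain $x$ itself.

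The key step is to show that $x \notin V[x_\ga\colon\ga\in a]$ unless the relevant real is already in $V$, and more generally that any set of ordinals in $V[x_\ga\colon\ga\in a]\cap V[x]$ is in $V$. For this I would argue as follows. Suppose $z$ is a set of ordinals in $V[x]\cap V[x_\ga\colon\ga\in a]$. Since $z\in V[x]$ and $x$ is $P_X$-generic over $V$, there is a $P_X$-name $\dot z\in V$ and an open set $O'\ni x$ forcing the relevant properties of $z$. On the other side, $z\in V[x_\ga\colon\ga\in a]$, a model contained in $V[x][g_\ga\colon\ga\in a]$; work inside $V[x]$ and think of $z$ as named by a $(\prod_{\ga\in a} P_\Gamma)$-name. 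The goal is to find, for any would-be value forced by a condition, a "second copy" of the $x_\ga$'s giving a conflicting value — i.e.\ to use a duplication/mutual-genericity argument over $V$ (not over $V[x]$) to pin $z$ down into $V$. Concretely, I would take a fresh point $x'$ that is $P_X$-generic over $V[g_\ga\colon\ga\in a]$ with the same translates $g_\ga\cdot x' $ forced appropriately, or rather reverse the roles: regard the pair $(x, (x_\ga)_{\ga\in a})$ as obtained from the genericity of $x$ over $V$ together with the $g_\ga$'s, and note that the map sending $x$ to $(g_\ga \cdot x)_{\ga\in a}$ is (in $V[g_\ga\colon\ga\in a]$) a continuous open map of Cohen forcings, so $(x_\ga)_{\ga\in a}$ is $P_{X^a}$-generic over $V[g_\ga\colon\ga\in a]$, hence $V[x_\ga\colon\ga\in a]$ and $V[x]$ are mutually generic over $V$... but this last assertion is exactly what must be verified, and it is false in general — it requires that $x$ be generic over $V[g_\ga\colon\ga\in a]$, whereas the $g_\ga$'s were chosen generic over $V[x]$, so by the product lemma $x$ is indeed still $P_X$-generic over $V[g_\ga\colon\ga\in a]$. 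Then $V[x]$ and $V[g_\ga\colon\ga\in a]$ are mutually generic over $V$, so $V[x]\cap V[g_\ga\colon\ga\in a] = V$, and since $V[x_\ga\colon\ga\in a]\subseteq V[g_\ga\colon\ga\in a]$ (as each $x_\ga = g_\ga\cdot x\in V[x][g_\ga]\subseteq V[x,g_\ga\colon\ga\in a]$ — wait, this needs $x$) the containment must be argued carefully.

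Let me restate the clean line I would actually carry out: the product lemma gives that $x$ is $P_X$-generic over $V[g_\ga\colon\ga\in a]$, hence $V[x]\cap V[g_\ga\colon\ga\in a]=V$. Now $V[x_\ga\colon\ga\in a]\subseteq V[g_\ga\colon\ga\in a][x]$; but we want it inside $V[g_\ga\colon\ga\in a]$ alone, which fails. So instead I would show directly that $V[x]\cap V[x_\ga\colon\ga\in a]=V$ by noting $V[x_\ga\colon\ga\in a]\subseteq V[x][g_\ga\colon\ga\in a]$ and that $V[x]$ is, inside $V[x][g_\ga\colon\ga\in a]$, an inner model such that $V[x][g_\ga\colon\ga\in a]$ is a (product $P_\Gamma^a$) generic extension of it; so any $z\in V[x]\cap V[x_\ga\colon\ga\in a]$ has a $P_X$-name in $V$ and simultaneously is definable from $(x_\ga)_{\ga\in a}$. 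Turbulence of the action, together with the density and meagerness of orbits, is what guarantees (this is where I expect the real work and the main obstacle to lie) that from $(g_\ga\cdot x)_{\ga\in a}$ one cannot recover any information about $x$ modulo $V$: given two conditions forcing different values of $z$, one builds a single sequence $(g'_\ga)_{\ga\in a}$ and a single $x'$ compatible with both via a turbulence/local-orbit argument analogous to Claim~\ref{technical} in the previous example, yielding a contradiction. The hard part is thus extracting the combinatorial content of turbulence — closure of the "local orbit" equivalence relations under the open sets involved — to perform this amalgamation; the mutual-genericity bookkeeping around it is routine.
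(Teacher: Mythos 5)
Your proposal does not close the argument: it ends by announcing that ``the hard part is extracting the combinatorial content of turbulence'' to amalgamate conditions over the whole finite tuple, and that step is never carried out. This is a genuine gap, and it is also not the route the paper takes. The paper never performs a tuple-level turbulence amalgamation. Instead it makes a reduction you are missing: set $\gb=\min(a)$ and recenter at $x_\gb$. Then $x=g_\gb^{-1}\cdot x_\gb$ and, for $\ga\in a\setminus\{\gb\}$, $x_\ga=(g_\ga g_\gb^{-1})\cdot x_\gb$; the group element $g_\gb^{-1}$ and the tuple $\langle g_\ga g_\gb^{-1}\colon\ga\in a\setminus\{\gb\}\rangle$ are mutually $P_\Gamma$-generic over $V[x_\gb]$, so the ordinary product forcing theorem applied over the base model $V[x_\gb]$ gives $V[x]\cap V[x_\ga\colon\ga\in a]\subseteq V[x_\gb]$. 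This reduces the whole claim to the single-pair statement $V[x]\cap V[x_\gb]=V$, which is exactly the standard theorem on turbulent actions with dense meager orbits (cited in the paper from the geometric set theory book) and is the only place turbulence enters.

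So two things are wrong with the plan as written. First, the one fact that genuinely uses turbulence --- $V[x]\cap V[g\cdot x]=V$ for $g$ a $P_\Gamma$-generic over $V[x]$ --- is neither cited nor proved; everything you say about it is a promissory note, and reproving it from scratch (a local-orbit/open-set amalgamation) is substantial work that your sketch does not begin. Second, by not recentering at $x_\gb$ you are setting up to prove a strictly harder statement (an amalgamation over arbitrary finite tuples of translates) than is needed; the recentering trick converts the multi-translate case into routine mutual-genericity bookkeeping plus one application of the known two-model theorem. Your intermediate observations (that $x$ remains $P_X$-generic over $V[g_\ga\colon\ga\in a]$, and that the naive mutual-genericity claims fail) are correct but do not advance the proof.
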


\begin{proof}
Without loss, assume that the set $a$ is nonempty, and write $\gb=\min(a)$. The point $g_\gb^{-1}$ is $P_\Gamma$-generic over $V[x_\gb]$, and the tuple $t=\langle g_\ga\cdot g_{\gb}^{-1}\colon \gb\in a\setminus \{\gb\}\rangle$ is generic over $V[x_\gb][g_\gb^{-1}]$ for a product of the posets $P_\Gamma$. Use the product forcing theorem to conclude that $V[x_\gb][g_\gb^{-1}]\cap V[x_\gb][s]=V[x_\gb]$ and therefore $V[x]\cap V[x_\ga\colon\ga\in a]\subseteq V[x_\gb]$. However, $V[x]\cap V[x_\gb]=V$ holds by the turbulence assumption and \cite[Theorem 3.2.]{z:geometric}. It follows that $V[x]\cap V[x_\ga\colon \ga\in a]=V$ as desired.
\end{proof}

\begin{claim}
If $a, b\subset\kappa$ are disjoint finite sets then $V[x_\ga\colon\ga\in a]\cap V[x_\ga\colon \ga\in b]=V$.
\end{claim}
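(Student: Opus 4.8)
The plan is to reduce this claim to the previous one by passing to larger generic extensions in which mutual genericity over $V[x]$ is manifest. The key point is that, although the translated points $x_\ga$ need not be mutually generic over $V[x]$ — the action is not assumed to be free — the group elements $g_\ga$ certainly are, and each $x_\ga$ lies in $V[x][g_\ga]$.

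First I would dispose of the trivial case in which $a$ or $b$ is empty, where one of the two models is $V$ itself. Assuming both are nonempty, I would note that the tuple $\langle g_\ga\colon\ga\in a\cup b\rangle$ is generic over $V[x]$ for a product of copies of $P_\Gamma\restriction U$ indexed by the finite set $a\cup b$, this being exactly the choice of the sequence $\langle g_\ga\colon\ga\in\kappa\rangle$ as one which is in finite tuples mutually $P_\Gamma$-generic over $V[x]$. Since $a$ and $b$ are disjoint, this product factors as the product of its restrictions to $a$ and to $b$, so $\langle g_\ga\colon\ga\in a\rangle$ and $\langle g_\ga\colon\ga\in b\rangle$ are mutually generic over $V[x]$, and the product forcing theorem yields $V[x][g_\ga\colon\ga\in a]\cap V[x][g_\ga\colon\ga\in b]=V[x]$.

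Next, using $x_\ga=g_\ga\cdot x$ together with the fact that the action is coded in $V$, I would record the inclusions $V[x_\ga\colon\ga\in a]\subseteq V[x][g_\ga\colon\ga\in a]$ and $V[x_\ga\colon\ga\in b]\subseteq V[x][g_\ga\colon\ga\in b]$, which by the previous paragraph give $V[x_\ga\colon\ga\in a]\cap V[x_\ga\colon\ga\in b]\subseteq V[x]$. Finally I would intersect with $V[x_\ga\colon\ga\in a]$ once more and apply the previous claim, which states $V[x_\ga\colon\ga\in a]\cap V[x]=V$, to conclude that $V[x_\ga\colon\ga\in a]\cap V[x_\ga\colon\ga\in b]$ is contained in $V$, hence equal to $V$.

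I do not expect any genuine obstacle here: every step is a direct invocation of the product forcing theorem and of the preceding claim. The only thing one must be careful about is precisely the distinction mentioned at the outset — the argument has to be routed through the group elements $g_\ga$ rather than the points $x_\ga$, since $V[x_\ga\colon\ga\in a]$ is in general a proper submodel of $V[x][g_\ga\colon\ga\in a]$, and the residual factor $V[x]$ is then cleared away by the previous claim.
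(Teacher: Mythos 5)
Your argument is exactly the paper's: the product forcing theorem applied to the mutually generic tuples $\langle g_\ga\colon\ga\in a\rangle$ and $\langle g_\ga\colon\ga\in b\rangle$ gives $V[x_\ga\colon\ga\in a]\cap V[x_\ga\colon\ga\in b]\subseteq V[x]$, and the previous claim then cuts this down to $V$. The proposal is correct and matches the paper's proof, merely spelling out a few steps the paper leaves implicit.
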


\begin{proof}
By the product forcing theorem, $V[x][g_\ga\colon\ga\in a]\cap V[x][g_\ga\colon \ga\in b]=V[x]$ holds, and therefore $V[x_\ga\colon\ga\in a]\cap V[x_\ga\colon\ga\in b]\subseteq V[x]$ holds. By the previous claim, $V[x_\ga\colon \ga\in a]\cap V[x]=V$ holds, and in consequence $V[x_\ga\colon\ga\in a]\cap V[x_\ga\colon\ga\in b]=V$ as desired.
\end{proof}

\noindent The duplicability follows.
\end{proof}

\begin{corollary}
The poset $P$ of Example~\ref{turbulentexample} forces $V[\tau_0]$, $V[\tau_1]$ to be mutually Noetherian extensions.
\end{corollary}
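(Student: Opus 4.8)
The plan is simply to assemble the two ingredients already in hand. Example~\ref{turbulentexample} asserts that $\tau_0$ and $\tau_1$ are \emph{mutually} duplicable names in $P$, which unpacks to the two statements ``$\tau_0$ is duplicable over $\tau_1$ in $P$'' and ``$\tau_1$ is duplicable over $\tau_0$ in $P$'' in the sense of Definition~\ref{duplicationdefinition}. First I would apply Proposition~\ref{duplicationproposition} to the pair $(\tau_0,\tau_1)$, concluding that $P$ forces $V[\tau_0]$ to be Noetherian to $V[\tau_1]$ over $V$. Then I would apply the same proposition a second time with the roles of $\tau_0$ and $\tau_1$ interchanged, concluding that $P$ forces $V[\tau_1]$ to be Noetherian to $V[\tau_0]$ over $V$. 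By the definition of the mutually Noetherian relation, the conjunction of these two forced statements is precisely the assertion of the corollary, so nothing further is needed.

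The only point worth a remark is a matter of coding: Definition~\ref{duplicationdefinition} and Proposition~\ref{duplicationproposition} are phrased for names for subsets of the ground model, whereas $\tau_0,\tau_1$ are names for points of the Polish space $X$; as usual one identifies such a point with its code (the filter of ground-model basic open sets containing it, a subset of a fixed countable ground-model basis), under which identification the duplicability established in Example~\ref{turbulentexample} is exactly the hypothesis required. The routine verification that $V[\tau_0]$ and $V[\tau_1]$ are genuine generic extensions of $V$ inside the ambient extension is already carried out inside the proof of Proposition~\ref{duplicationproposition}. I therefore expect no obstacle: the corollary follows immediately, in the same way that Corollary~\ref{graphcorollary} follows from its accompanying example.
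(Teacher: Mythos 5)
Your proposal is correct and matches the paper's (implicit) reasoning exactly: the paper states this corollary without proof precisely because it follows from Example~\ref{turbulentexample} together with two applications of Proposition~\ref{duplicationproposition}, one in each direction, just as Corollary~\ref{graphcorollary} follows from its example. Your remark about identifying a point of $X$ with the set of ground-model basic open sets containing it is the right way to fit the names into the framework of Definition~\ref{duplicationdefinition}, and nothing further is needed.
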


\noindent The following example uses the notion of Suslin forcing which occurs several times in this paper.

\begin{definition}
\cite[Definition 3.6.1]{bartoszynski:set}
A forcing $Q$ is \emph{Suslin} if there s an \emph{ambient} Polish space $X$ in which the conditions of $Q$ form an analytic set, and the ordering and incompatibility relations on $Q$ are analytic relations on $X$.
\end{definition}

\noindent The following fact regarding Suslin forcing will be used repeatedly throughout the paper.

\begin{fact}
\label{suslinfact}
Let $Q$ be a c.c.c.\ Suslin forcing and let $V[G]$ be a forcing extension. Then

\begin{enumerate}
\item \cite[Theorem 3.6.6]{bartoszynski:set} the reinterpretation $Q^{V[G]}$ is a c.c.c.\ Suslin forcing in $V[G]$;
\item \cite[Corollary 3.6.5]{bartoszynski:set} if $H\subset Q^{V[G]}$ is a filter generic over $V[G]$, then $H\cap V\subset Q^V$ is a filter generic over $V$.
\end{enumerate}
\end{fact}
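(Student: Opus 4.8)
Both items are classical facts about c.c.c.\ Suslin forcing, and in a final writeup I would simply quote \cite[Theorem 3.6.6 and Corollary 3.6.5]{bartoszynski:set}; what follows is the shape of the arguments. The common engine is Shoenfield and Mostowski absoluteness, coupled with the remark that the c.c.c.\ collapses every maximal antichain to a countable object, hence one coded by a single real, so that assertions about it fall within pointclasses ($\Sigma^1_1$, $\Pi^1_2$) for which absoluteness between $V$ and $V[G]$ is available.

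For item (1): the conditions of $Q^{V[G]}$, together with its ordering and incompatibility relation, are by definition the reinterpretations in $V[G]$ of the analytic formulas defining those data for $Q$. That these reinterpretations still constitute a separative partial order --- reflexivity and transitivity of the order, and the coherence requirement that $p\perp q$ hold precisely when no condition refines both $p$ and $q$ --- is a conjunction of $\Pi^1_2$ statements about the fixed analytic codes, hence true in $V[G]$ by Shoenfield absoluteness. That $Q^{V[G]}$ remains c.c.c.\ in $V[G]$ is the one substantive point: the route I would take is the classical equivalence, valid for Suslin forcings, between the c.c.c.\ and the nonexistence of a perfect set of pairwise incompatible conditions (failure of the c.c.c.\ yields such a perfect set by a Cantor-scheme construction played against the analytic incompatibility relation, while the presence of such a set obviously precludes the c.c.c.), together with the absoluteness of that nonexistence between $V$ and $V[G]$; I would import this absoluteness analysis wholesale from \cite[Section 3.6]{bartoszynski:set}.

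For item (2): fix a filter $H\subseteq Q^{V[G]}$ generic over $V[G]$ and set $\bar H=H\cap V$; I claim $\bar H$ is a filter on $Q^V$ generic over $V$. Upward closure of $\bar H$ inside $Q^V$ is immediate from the absoluteness of the ordering. For directedness, let $p,p'\in\bar H$; working in $V$, choose a maximal antichain $A$ of $Q^V$ contained in the set $D$ of all conditions $q$ that are below both $p$ and $p'$ or else incompatible with $p$ or with $p'$ --- this $D$ is dense in $Q^V$. By the c.c.c.\ the set $A$ is countable, hence coded by a real, and ``$A$ is a maximal antichain of $Q$'' is a $\Pi^1_2$ fact about that real, hence true in $V[G]$ as well; so $H$ meets $A$ in some condition $b$, and since $b,p,p'$ all lie in $H$, $b$ is compatible with $p$ and with $p'$ in $Q^{V[G]}$, hence --- compatibility being $\Sigma^1_1$ and absolute --- in $Q^V$ too, which forces $b$ into the first alternative defining $D$, i.e.\ $b\le p$ and $b\le p'$, and $b\in\bar H$. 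Genericity is the same idea with no refinement: an arbitrary maximal antichain $A\in V$ of $Q^V$ is countable by the c.c.c., ``$A$ is a maximal antichain'' is $\Pi^1_2$ about a real coding $A$ and so holds in $V[G]$, the $V[G]$-generic $H$ therefore meets $A$, and the witness, lying in $V$, belongs to $\bar H$.

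The principal obstacle is the c.c.c.-preservation clause of item (1). Once one knows that maximal antichains of a c.c.c.\ Suslin forcing are countable, everything --- the order-theoretic absoluteness and all of item (2) --- reduces quickly to Shoenfield and Mostowski absoluteness; but turning ``c.c.c.'' itself into an absolute statement genuinely requires the perfect-antichain analysis of \cite{bartoszynski:set}, and that is the one ingredient I would be content to cite rather than reprove.
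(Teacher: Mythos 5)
The paper gives no proof of this Fact at all --- it is quoted verbatim from \cite[Section 3.6]{bartoszynski:set} --- and your sketch is a faithful reconstruction of the standard arguments there: the perfect-antichain reformulation of the c.c.c.\ for absoluteness in item (1), and Shoenfield absoluteness of ``this real codes a countable maximal antichain'' for item (2). One micro-quibble: to pass from ``$b$ is compatible with $p$ in $Q^{V[G]}$'' down to ``compatible in $Q^V$'' you should invoke the upward absoluteness of the \emph{analytic incompatibility} relation (were $b\perp p$ in $V$, this would persist to $V[G]$), since the $\Sigma^1_1$ compatibility relation is absolute only in the opposite direction.
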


\begin{example}
\label{suslinexample}
Let $Q_0$ be an arbitrary forcing and $Q_1$ be a Suslin c.c.c.\ forcing. Let $P$ be the iteration $Q_0*\dot Q_1$ where the definition of $Q_1$ is reinterpreted in the $Q_0$-forcing extension. Let $\tau_0$ be the $P$-name for the filter on the first iterand and $\tau_1$ be the $P$-name for the intersection of the filter on the second iterand with the ground model. Then $\tau_0, \tau_1$ are mutually duplicable names in $P$.
\end{example}

\noindent Note that by Fact~\ref{suslinfact}(2) $\tau_1$ is forced to be a filter on $Q_1$ generic over $V$.

\begin{proof}
To show that $\tau_1$ is duplicable over $\tau_0$, let $p=\langle q_0, \dot q_1\rangle$ be a condition in the poset $P$. Let $\kappa$ be a regular cardinal larger than $|Q_0|$. Let $G_0\subset Q_0$ be a filter generic over $V$ containing the condition $q_0$, and force with the finite support product of $\kappa$-many copies of the forcing $Q_1\restriction \dot q_1/G_0$ to obtain filters $G_{1\ga}\subset Q_1$ for $\ga\in\kappa$. I claim that the filters $H_\ga= G_0*G_{1\ga}\subset P$ for $\ga\in\kappa$ witness the duplicability of $\tau_1$ over $\tau_0$.
To see this, for each $\ga\in\kappa$ write $K_\ga=G_{1\ga}\cap V=\tau_1/H_\ga$. The following claim completes the proof by the product forcing theorem.

\begin{claim}
If $a\subset\kappa$ is a finite set, then $\langle K_\ga\colon \ga\in a\rangle$ are filters on $Q_1$ which are mutually generic over $V$.
\end{claim}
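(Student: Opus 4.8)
The plan is to derive the claim from the standard characterization of mutual genericity together with Fact~\ref{suslinfact}(2). First I would recall why the construction makes sense: for each $\ga\in\kappa$ the filter $G_{1\ga}$ is generic over $V[G_0]$ for the forcing $Q_1^{V[G_0]}$, which by Fact~\ref{suslinfact}(1) is a c.c.c.\ Suslin forcing there; moreover the $\langle G_{1\ga}\colon\ga\in\kappa\rangle$ are in finite tuples mutually generic over $V[G_0]$, since we used the finite support product. Hence for any finite $a\subset\kappa$ the tuple $\langle G_{1\ga}\colon\ga\in a\rangle$ is generic over $V[G_0]$ for the finite-support product $\prod_{\ga\in a}Q_1^{V[G_0]}$.

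Next I would apply Fact~\ref{suslinfact}(2) to this finite product. The point is that the finite product of copies of the Suslin c.c.c.\ forcing $Q_1$ is again a Suslin c.c.c.\ forcing, whose reinterpretation in $V[G_0]$ is exactly $\prod_{\ga\in a}Q_1^{V[G_0]}$ (the ambient space being a finite power of the original ambient space, the analytic condition set and relations being the finite products of the original ones, and c.c.c.\ being preserved under finite, indeed countable, products of c.c.c.\ Suslin forcings). Applying Fact~\ref{suslinfact}(2) to the generic filter $\prod_{\ga\in a}G_{1\ga}$ over $V[G_0]$ yields that $\bigl(\prod_{\ga\in a}G_{1\ga}\bigr)\cap V$ is a filter on $\prod_{\ga\in a}Q_1^V$ generic over $V$. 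But $\bigl(\prod_{\ga\in a}G_{1\ga}\bigr)\cap V$ is precisely $\prod_{\ga\in a}K_\ga$ (a tuple of conditions lies in the ground model iff each coordinate does, since the ambient space of the product is the product of the ambient spaces and these are coded in $V$). Thus $\langle K_\ga\colon\ga\in a\rangle$ is a filter on $\prod_{\ga\in a}Q_1$ generic over $V$, which is exactly the assertion that the $K_\ga$ for $\ga\in a$ are mutually generic over $V$.

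With the claim in hand, the duplicability of $\tau_1$ over $\tau_0$ follows: conditions (1) and (2) of Definition~\ref{duplicationdefinition} are immediate ($\kappa=\gw_1$ works and each $H_\ga=G_0*G_{1\ga}$ is $P$-generic over $V$ containing $p$, using the product forcing theorem and Fact~\ref{suslinfact}(2) for the iteration step), condition (3) holds because $\tau_1/H_\ga=G_{1\ga}\cap V=K_\ga$ need not be constant---wait, here one must be slightly careful: Definition~\ref{duplicationdefinition}(3) demands $\tau_1/H_\ga$ be the \emph{same} for all $\ga$, but in the present example it is $\tau_0$ that is duplicated over $\tau_1$ in the symmetric companion argument, whereas for ``$\tau_1$ duplicable over $\tau_0$'' it is $\tau_0/H_\ga=G_0$ that is genuinely constant across $\ga$, and condition (4) is exactly what the claim above delivers, via the product forcing theorem to pass from $V[G_0][K_\ga\colon\ga\in a]\cap V[G_0][K_\ga\colon\ga\in b]=V[G_0]$ down to $V[K_\ga\colon\ga\in a]\cap V[K_\ga\colon\ga\in b]=V$ using that $V[G_0]\cap V[K_\ga\colon\ga\in a]=V$.

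The main obstacle I anticipate is the bookkeeping around Fact~\ref{suslinfact}: one must check that the finite support product of copies of a c.c.c.\ Suslin forcing is itself presentable as a c.c.c.\ Suslin forcing (so that part (2) of the Fact applies to it) and that its reinterpretation commutes with taking products, i.e.\ $\bigl(\prod_{\ga\in a}Q_1\bigr)^{V[G_0]}=\prod_{\ga\in a}Q_1^{V[G_0]}$. These are routine---the c.c.c.\ of finite products of c.c.c.\ Suslin forcings is a standard absoluteness fact, and the ambient-space description is transparent---but they are where the real content of the claim sits; everything else is an application of the product forcing theorem.
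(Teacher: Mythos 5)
Your argument is correct and is essentially the paper's own proof: both form the finite product $R=\prod_{\ga\in a}Q_1$, observe it is Suslin and c.c.c.\ via Fact~\ref{suslinfact}(1), note that $\langle G_{1\ga}\colon\ga\in a\rangle$ is $R$-generic over $V[G_0]$, and apply Fact~\ref{suslinfact}(2) to conclude that $\langle K_\ga\colon\ga\in a\rangle$ is $R$-generic over $V$. You merely spell out the routine bookkeeping (reinterpretation commuting with finite products, coordinatewise membership in $V$) that the paper leaves implicit, and your parenthetical self-correction about Definition~\ref{duplicationdefinition}(3) resolves itself correctly.
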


\begin{proof}
Consider the poset $R$ which is the product of $a$-many copies of $Q_1$. It is easy to check that $R$ is Suslin, and by Fact~\ref{suslinfact}(1), $R$ is c.c.c. The filters $\langle G_{1\ga}\colon \ga\in a\rangle$ form an $R$-generic sequence over $V[G_0]$. By Fact~\ref{suslinfact}(2), the sequence $\langle K_\ga\colon\ga\in a\rangle$ is an $R$-generic sequence over $V$ as desired.
\end{proof}

To show that $\tau_0$ is duplicable over $\tau_1$,  let $p=\langle q_0, \dot q_1\rangle$ be a condition in the poset $P$. Let $\kappa$ be a regular cardinal larger than $|Q_0|$. Let $s=\langle G_{0\ga}\colon \ga\in\kappa\rangle$ be a mutually generic sequence of filters on $Q_0\restriction q_0$. In the model $V[s]$, consider the (reinterpretation of the) poset $Q_1$ and the conditions $r_\ga=\dot q_1/G_{0\ga}$ in it for $\ga\in\kappa$. Since $Q_1$ is c.c.c.\ in $V[s]$ by Fact~\ref{suslinfact}(1) there must be a condition $r\in Q_1$ which forces that the set $\{\ga\in\kappa\colon r_\ga$ belongs to the generic filter$\}$ is cofinal in $\kappa$. Let $G_1\subset Q_1$ be a filter generic over $V[s]$, containing the condition $r$. By Fact~\ref{suslinfact}(2), for each ordinal $\ga\in\kappa$, $G_{1\ga}=G_1\cap V[G_{0\ga}]$ is a filter on $Q_1$ generic over $V[G_{0\ga}]$. I claim that the filters $H_\ga=G_{0\ga}*G_{1\ga}$ for $\ga\in\kappa$ such that $r_\ga\in G_1$ witness the duplicability of $\tau_0$ over $\tau_1$. This is an immediate corollary of the product forcing theorem applied to the models $V[G_{0\ga}]$ for $\ga\in\kappa$.
\end{proof}

\begin{corollary}
Let $\mu$ be a Borel probability measure on a Polish space $X$, and let $x_0, x_1\in X$ be mutually $\mu$-random elements of $X$. Then the models $V[x_0]$, $V[x_1]$ are mutually Noetherian.
\end{corollary}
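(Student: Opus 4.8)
The plan is to recognize the corollary as a direct instance of Example~\ref{suslinexample}, taking both iterands to be $\mu$-random forcing. Recall that $\mu$-random forcing---the partial order of $\mu$-positive Borel subsets of $X$ ordered by inclusion modulo $\mu$-null sets---is a Suslin c.c.c.\ forcing: its conditions are naturally coded by reals (Borel codes), the map sending a code to the measure of the set it codes is Borel, so the set of conditions and the ordering and incompatibility relations are Borel (hence analytic) in these codes, and the forcing is c.c.c.\ because a probability measure admits no uncountable pairwise disjoint family of positive-measure sets (see \cite{bartoszynski:set}). Thus Example~\ref{suslinexample} applies with $Q_0=Q_1$ equal to $\mu$-random forcing and $P=Q_0*\dot Q_1$ (the second iterand being the reinterpretation of $\mu$-random forcing in the $Q_0$-extension), and it yields that the names $\tau_0,\tau_1$ of that example are mutually duplicable in $P$. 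Proposition~\ref{duplicationproposition}, applied in both directions, then gives that $P$ forces $V[\tau_0]$ and $V[\tau_1]$ to be mutually Noetherian over $V$.

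It remains to match the hypothesis of the corollary with the conclusion of this analysis. First I would invoke the standard fact that a pair $\langle x_0,x_1\rangle$ is mutually $\mu$-random over $V$ (equivalently, $\mu\times\mu$-random over $V$) if and only if $x_0$ is $\mu$-random over $V$ and $x_1$ is $\mu$-random over $V[x_0]$; in that case $\langle x_0,x_1\rangle$ induces a filter $G\subset P=Q_0*\dot Q_1$ generic over $V$ with $V[G]=V[x_0,x_1]$, in which the filter on the first iterand is the $\mu$-random filter over $V$ given by $x_0$ and the filter on the second iterand is the $\mu$-random filter over $V[x_0]$ given by $x_1$. Hence $V[\tau_0/G]=V[x_0]$. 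For the other model, recall that $\tau_1$ is the $P$-name for the intersection of the second-iterand filter with $V$; by Fact~\ref{suslinfact}(2) this is a $\mu$-random filter over $V$, and since $x_1$ is by the Fubini-style symmetry of mutual randomness also $\mu$-random over $V$, this intersection is precisely the $\mu$-random filter over $V$ determined by $x_1$. As a random real is recovered as the unique point lying in every condition of its generic filter, $V[\tau_1/G]=V[x_1]$.

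Putting this together, in $V[x_0,x_1]=V[G]$ the models $V[x_0]=V[\tau_0/G]$ and $V[x_1]=V[\tau_1/G]$ are mutually Noetherian, which is the assertion of the corollary. I expect the only point requiring genuine care to be the identification $V[\tau_1/G]=V[x_1]$ together with the translation of ``mutually $\mu$-random'' into genericity for the two-step iteration $Q_0*\dot Q_1$; both are routine, but one does need the symmetry of mutual randomness and Fact~\ref{suslinfact}(2) to see that restricting the upper generic filter to $V$ loses no information about $x_1$.
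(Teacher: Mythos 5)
Your proposal is correct and matches the paper's intended argument: the corollary is stated without proof precisely because it is the instance of Example~\ref{suslinexample} with both iterands equal to $\mu$-random forcing, combined with Proposition~\ref{duplicationproposition}. Your identification of $V[\tau_0/G]$ and $V[\tau_1/G]$ with $V[x_0]$ and $V[x_1]$ via Fact~\ref{suslinfact}(2) and the symmetry of mutual randomness is exactly the routine bookkeeping the paper leaves implicit.
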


\section{Preservation theorems}
\label{preservationsection}

Any notion of perpendicularity similar to Definition~\ref{extdefinition} comes with a natural notion of balance for analytic forcings.

\begin{definition}
Let $P$ be an analytic forcing.

\begin{enumerate}
\item A pair $\langle Q, \gs\rangle$ is \emph{Noetherian balanced} if $Q\Vdash\gs\in P$ and for any pair $V[G_0]$, $V[G_1]$ of mutually Noetherian extensions of the ground model, every pair $H_0, H_1\subset Q$ of filters generic over $V$ and for every pair $p_0\in V[G_0]$, $p_1\in V[G_1]$ of conditions stronger than $\gs/H_0, \gs/H_1$ respectively and belonging to the respective models $V[G_0], V[G_1]$, the conditions $p_0, p_1\in P$ have a common lower bound.
\item $P$ is \emph{Noetherian balanced} if for every condition $p\in P$ there is a Noetherian balanced pair $\langle Q, \gs\rangle$ such that $Q\Vdash\gs\leq\check p$.
\end{enumerate}

\noindent Similar definitions apply for $K_\gs$-Noetherian.
\end{definition}

\noindent Note that if a poset is $K_\gs$-Noetherian balanced, then it is Noetherian balanced, since the key quantification over pairs of generic extensions includes more of them in case of mutually $K_\gs$-Noetherian pairs.

The supply of mutually Noetherian pairs of extensions provided in the previous section now makes it possible to prove several preservation theorems. They are stated using the parlance of \cite[Convention 1.7.18]{z:geometric}. Thus, given an inaccessible cardinal $\kappa$, a Suslin poset $P$ is Noetherian balanced cofinally below $\kappa$ if for every generic extension $V[K_0]$ generated by poset of cardinality smaller than $\kappa$ there is a larger generic extension $V[K_1]$ generated by a poset of cardinality smaller than $\kappa$ such that $V_\kappa[K_1]\models P$ is Noetherian balanced. 

\begin{theorem}
\label{xitheorem}
In cofinally Noetherian balanced extensions of the choiceless Solovay model, if $A_0, A_1\in 3^\gw$ are non-meager sets then there are points $y_0\in A_0$ and $y_1\in A_1$ such that the set $\{i\in\gw\colon y_0(i)=y_1(i)\}$ is finite.
\end{theorem}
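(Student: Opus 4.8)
The plan is to argue in the cofinally Noetherian balanced extension $W$ of the choiceless Solovay model and run the standard balance argument adapted to the Noetherian setting, using the product graph example from Section~\ref{example1section}. So suppose for contradiction that $A_0, A_1 \subset 3^\gw$ are non-meager sets in $W$ with the property that for all $x_0 \in A_0$, $x_1 \in A_1$ the set $\{i : x_0(i) = x_1(i)\}$ is infinite. Since $W$ is a generic extension of the Solovay model $V_\kappa[\text{Coll}(\gw,{<}\kappa)]$ by some Noetherian balanced poset $P$, we may find a condition $p \in P$ and names $\dot A_0, \dot A_1$ forcing this statement, together with an intermediate model $V[K]$ (a small forcing extension of the ground model $V$, with $\kappa$ still inaccessible in $V[K]$ and $V_\kappa[K] \models P$ Noetherian balanced) over which $\dot A_0, \dot A_1, p$ are definable from a real parameter. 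The non-meagerness of $A_0, A_1$ is then witnessed in the Solovay-style sense: for a Cohen-generic point over $V[K]$ lying in a suitable basic open set, the point lands in $A_0$ (respectively $A_1$).

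The key move is to consider the product graph $\Gamma = \prod_n \Gamma_n$ where each $\Gamma_n$ is the complete graph $K_3$ on $\{0,1,2\}$: $K_3$ is connected and not bipartite, so by Corollary~\ref{graphcorollary} the poset $P_\Gamma$ adds a generic pair $\langle x_0, x_1 \rangle \in 3^\gw \times 3^\gw$ with $x_0(n) \neq x_1(n)$ for all $n$, and forces $V[K][x_0], V[K][x_1]$ to be mutually Noetherian extensions of $V[K]$. Force with $P_\Gamma$ over $V[K]$ (this is a small forcing, so it stays inside the Solovay apparatus below $\kappa$) to get such a pair $x_0, x_1$. Since $x_0$ is Cohen-generic over $V[K]$ and lands in the appropriate basic open set, $x_0 \in A_0$ in the corresponding $P$-extension; likewise $x_1 \in A_1$. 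Now let $H_0, H_1 \subset Q$ be the (trivial or generic) filters witnessing the Noetherian balanced decomposition of $P$ below $p$, and let $p_0 \in V[K][x_0]$, $p_1 \in V[K][x_1]$ be the conditions of $P$ forcing $x_0 \in \dot A_0$ and $x_1 \in \dot A_1$ respectively (these lie in the respective models and extend $\gs/H_0, \gs/H_1$). Because $V[K][x_0], V[K][x_1]$ are mutually Noetherian, the Noetherian balance of $\langle Q, \gs\rangle$ gives a common lower bound $q \leq p_0, p_1$ in $P$.

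Finally, take a generic filter $G \subset P$ through $q$ over the ambient model. Then in $V[K][G]$ both $x_0 \in A_0$ and $x_1 \in A_1$ hold — by the forcing theorem applied to $p_0, p_1$ — yet $x_0(n) \neq x_1(n)$ for every $n \in \gw$, so $\{i : x_0(i) = x_1(i)\} = \emptyset$ is finite. This contradicts the assumed property of $A_0, A_1$, completing the proof. The main obstacle, and the step requiring the most care, is the bookkeeping of the Solovay model: one must check that the non-meagerness of $A_0$ and $A_1$ really does translate into "a Cohen-generic point over $V[K]$ in a suitable open set lies in $A_i$" — this is where the homogeneity of $\Coll(\gw,{<}\kappa)$ and the standard Solovay absoluteness arguments enter — and that $x_0$, being $P_X$-generic for $X = 3^\gw$ (the first coordinate projection of a $P_\Gamma$-generic), is genuinely Cohen over $V[K]$, which follows from \cite[Proposition 3.1.1]{z:geometric} since the coordinate projection $3^\gw \times 3^\gw \supseteq \Gamma \to 3^\gw$ is continuous and open. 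The rest is the routine balance-versus-mutually-Noetherian amalgamation that Section~\ref{preservationsection} is built to supply.
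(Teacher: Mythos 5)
Your overall strategy is the paper's: instantiate Corollary~\ref{graphcorollary} with $\Gamma_n=K_3$ to get a pair $x_0,x_1\in 3^\gw$ differing everywhere with $V[K][x_0]$, $V[K][x_1]$ mutually Noetherian, extract from non-meagerness conditions of $P$ forcing the two points into $\tau_0,\tau_1$, and amalgamate via Noetherian balance. However, there is one concrete step that fails as written. You assert that $x_0$ ``lands in the appropriate basic open set'' $r_0$ and $x_1$ in $r_1$. The conditions $r_0,r_1$ are basic open sets handed to you by the non-meagerness argument, and you cannot in general steer the $P_\Gamma$-generic pair into $r_0\times r_1$: if, say, $r_0$ and $r_1$ are determined by finite strings $u_0,u_1$ with $u_0(0)=u_1(0)$, then $(r_0\times r_1)\cap\Gamma=\emptyset$, so no condition of $P_\Gamma$ forces the pair there. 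The paper's proof handles this by replacing $x_0,x_1$ with finite modifications $y_0,y_1$ lying in $r_0,r_1$; finite modification preserves Cohen genericity and the generated models, and the agreement set $\{i\colon y_0(i)=y_1(i)\}$ is then finite rather than empty --- which is exactly why the theorem is stated with ``finite'' and why your claimed conclusion $\{i\colon x_0(i)=x_1(i)\}=\emptyset$ overshoots what the method delivers.

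A secondary elision: the conditions forcing $y_i\in\tau_i$ do not live in $V[K][x_i]$ alone. They are values of names $\eta_i$ on products $Q\times R\times S_i$ with auxiliary posets $S_i$, so one must choose filters $H_0\subset Q\times S_0$ and $H_1\subset Q\times S_1$ \emph{mutually generic over} $V[K][x_0][x_1]$ and then invoke Proposition~\ref{productproposition} to see that $V[K][y_0][H_0]$ and $V[K][y_1][H_1]$ are still mutually Noetherian before the balanced pair $\langle Q,\gs\rangle$ can be applied. Your parenthetical ``(trivial or generic) filters'' skips the mutual genericity over the joint model, which is what makes the amalgamation legitimate. Both gaps are repairable by following the paper's bookkeeping, but as written the argument does not go through.
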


\noindent This proves Theorem~\ref{firsttheorem}(1). I do not know if the conclusion holds also for non-null sets for the usual Borel probability measure on $3^\gw$.

\begin{proof}
Let $\kappa$ be an inaccessible cardinal. Let $P$ be a Suslin forcing which is Noetherian balanced cofinally below $\kappa$. Let $W$ be the choiceless Solovay model derived from $\kappa$. Work in $W$. Let $p\in P$ be a condition and let $\tau_0, \tau_1$ be $P$-names such that $p\Vdash\tau_0, \tau_1\subset 3^\gw$ are non-meager sets. I have to find two points $y_0, y_1$ such that the set $\{i\in\gw\colon y_0(i)=y_1(i)\}$ is finite and a condition stronger than $p$ which forces them to $\tau_0, \tau_1$ respectively.

Both $p, \tau_0, \tau_1$ are definable from some elements of the ground model and an additional parameter $z\in\cantor$. Let $V[K]$ be an intermediate extension obtained by a partial order of cardinality less than $\kappa$ such that $z\in V[K]$, and such that $V[K]\models P$ is Noetherian balanced. Work in $V[K]$. Let $\langle Q, \gs\rangle$ be a Noetherian balanced pair such that $Q\Vdash\gs\leq p$. Let $R$ be the Cohen poset of nonempty open subsets of $3^\gw$ ordered by inclusion, adding a Cohen generic point $\dot y$. 

For each $i\in 2$, there has to be a poset $S_i$ of cardinality smaller than $\kappa$, a $Q\times R\times S_i$-name $\eta_i$ for a condition in $P$ stronger than $\gs$, and conditions $q_i\in Q$, $r_i\in R$ and $s_i\in S_i$ which force in the product $Q\times R\times S_i$ the following.  In the $\coll(\gw, <\kappa)$-extension, in the poset $P$, $\eta_i\Vdash\dot y\in\tau_i$ holds.
Otherwise, in the model $W$, if $H\subset Q$ is a filter generic over $V[K]$ then the condition $\gs/H$ would force in $P$ that the comeager set of points $R$-generic over $V[K][H]$ to be disjoint from $\tau_i$, contradicting the initial assumptions on $p$ and $\tau_i$.

In the model $W$, use Corollary~\ref{graphcorollary} to produce points $x_0, x_1\in 3^\gw$ which are are separately $R$-generic over $V[K]$, such that for all $i\in\gw$ $x_0(i)\neq x_1(i)$, and such that the models $V[K][x(0)]$, $V[K][x_1]$ are mutually Noetherian. Let $y_0\in 3^\gw$ be a finite modification of $x_0$ which belongs to $r_0$ and let $y_1\in 3^\gw$ be a finite modification of $x_1$ which belongs to $r_1$. Let $H_0\subset Q\times S_0$, $H_1\subset Q\times S_1$ be filters mutually generic over $V[K][x_0][x_1]$ meeting the respective conditions $q_0, q_1\in Q$ and $s_0\in S_0$ and $s_1\in S_1$. By Proposition~\ref{productproposition}, the models $V[K][y_0][H_0]$ and $V[K][y_1][H_1]$ are mutually Noetherian extensions of $V[K]$. Let $p_0=\gs/y_0, H_0$ and $p_1=\gs/y_1, H_1$. These are conditions in $P$ in the respective models stronger than $\gs/H_0$ and $\gs/H_1$ respectively. By the balance assumption on the pair $\langle Q, \gs\rangle$, the conditions $p_0, p_1$ are compatible. By the forcing theorem applied in the respective models $V[K][y_0][H_0]$ and $V[K][y_1][H_1]$, the common lower bound of these two conditions forces $\check y_0\in\tau_0$ and $\check y_1\in\tau_1$ as required.
\end{proof}

\begin{corollary}
\label{ultrafiltercorollary}
In cofinally Noetherian balanced extensions of the choiceless Solovay model, there are no non-principal ultrafilters on $\gw$.
\end{corollary}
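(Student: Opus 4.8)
The plan is to derive Corollary~\ref{ultrafiltercorollary} from Theorem~\ref{xitheorem} by the argument already sketched in the discussion after Theorem~\ref{firsttheorem}. Suppose towards a contradiction that in some cofinally Noetherian balanced extension of the choiceless Solovay model there is a non-principal ultrafilter $U$ on $\gw$. I want to use $U$ to partition $3^\gw$ into three non-meager pieces in a way that makes Theorem~\ref{xitheorem} fail.

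First I would fix a partition of $\gw$ into three infinite sets, say $\gw = I_0 \cup I_1 \cup I_2$, and for each $x \in 3^\gw$ and each $j \in 3$ consider the set $x^{-1}(j) \subseteq \gw$. For a fixed $x$ in a comeager set the three sets $x^{-1}(0), x^{-1}(1), x^{-1}(2)$ are all infinite and partition $\gw$, hence exactly one of them lies in $U$; define $A_j = \{x \in 3^\gw \colon x^{-1}(j) \in U\}$. Then $A_0, A_1, A_2$ cover a comeager subset of $3^\gw$, so by the Baire category theorem at least two of them, say $A_{j_0}$ and $A_{j_1}$ with $j_0 \neq j_1$, are non-meager. (Here one must be slightly careful: the $A_j$ need not be Borel, but non-meagerness of a cover of a comeager set by finitely many sets in any model of ZF+DC still forces one of the pieces to be non-meager, since otherwise each is contained in a meager Borel set and the union of three meager sets is meager.)

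Now apply Theorem~\ref{xitheorem} to $A_{j_0}$ and $A_{j_1}$: there are points $x_0 \in A_{j_0}$ and $x_1 \in A_{j_1}$ such that $\{i \in \gw \colon x_0(i) = x_1(i)\}$ is finite. But $x_0^{-1}(j_0) \in U$ and $x_1^{-1}(j_1) \in U$, so $x_0^{-1}(j_0) \cap x_1^{-1}(j_1) \in U$; in particular this intersection is infinite. Every $i$ in this intersection satisfies $x_0(i) = j_0 \neq j_1 = x_1(i)$, which is consistent, so I need one more twist: I should instead index by a \emph{single} value, partitioning $3^\gw$ according to where the $U$-large color sits. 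Concretely, for $i$ in the infinite set $x_0^{-1}(j_0) \cap x_1^{-1}(j_1)$ we have $x_0(i) \neq x_1(i)$, which does not yet contradict anything; the contradiction comes from running the argument with $A_0, A_1, A_2$ defined by a \emph{common} target value and comparing two points drawn from the \emph{same} piece. Redo it: pick the non-meager piece $A_j$, apply Theorem~\ref{xitheorem} with $A_0 = A_1 = A_j$, obtaining $x_0, x_1 \in A_j$ with $\{i \colon x_0(i) = x_1(i)\}$ finite; but $x_0^{-1}(j) \in U$ and $x_1^{-1}(j) \in U$ force $x_0^{-1}(j) \cap x_1^{-1}(j) \in U$ to be infinite, and on that set $x_0(i) = j = x_1(i)$, contradiction.

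The only real obstacle is the definability/measurability point flagged in the second paragraph: one must argue, within ZF+DC and without assuming the $A_j$ are Borel, that some $A_j$ is non-meager. This is routine — the $A_j$ cover a comeager set, and a finite union of meager sets is meager in ZF — but it should be stated explicitly. Everything else is a direct invocation of Theorem~\ref{xitheorem}.
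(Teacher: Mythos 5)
Your final argument is correct and is exactly the paper's proof: partition $3^\gw$ into three pieces according to which color has its preimage in $U$, note one piece is non-meager, and apply Theorem~\ref{xitheorem} with $A_0=A_1$ equal to that piece to get two points agreeing on a $U$-large (hence infinite) set, a contradiction. The detour through two distinct pieces $A_{j_0},A_{j_1}$ in your middle paragraph should simply be deleted, but the argument you settle on is the intended one.
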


\begin{proof}
If $U$ is a non-principal ultrafilter on $\gw$, then the map $c\colon 3^\gw\to 3$ defined by $c(x)=i$ if $\{n\in\gw\colon c(n)=i\}\in U$ partitions of $3^\gw$ into three pieces neither of which contains points $y_0, y_1$ such that the set $\{i\in\gw\colon y_0(i)=y_1(i)\}$ is finite. One of these pieces must be non-meager. Theorem~\ref{xitheorem} concludes the proof.
\end{proof}

\begin{theorem}
\label{ttheorem}
Let $\Gamma\curvearrowright X$ be a turbulent action of a Polish group with all orbits dense and meager.  In cofinally Noetherian balanced extensions of the choiceless Solovay model, if $A_0, A_1\subset X$ are non-meager sets, then there are points $x_0\in A_0$, $x_1\in A_1$, and $\Gamma\in\Gamma$ such that $\gamma\cdot x_0=x_1$.
\end{theorem}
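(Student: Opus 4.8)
\noindent The plan is to imitate the proof of Theorem~\ref{xitheorem} essentially verbatim, replacing the appeal to Corollary~\ref{graphcorollary} by an appeal to the corollary following Example~\ref{turbulentexample}. Fix an inaccessible cardinal $\kappa$, a Suslin forcing $P$ that is Noetherian balanced cofinally below $\kappa$, let $W$ be the choiceless Solovay model derived from $\kappa$, and work in $W$. Given $p\in P$ and $P$-names $\tau_0,\tau_1$ with $p\Vdash\tau_0,\tau_1\subseteq X$ non-meager, pass to an intermediate extension $V[K]$ by a forcing of cardinality $<\kappa$ containing all relevant parameters and satisfying ``$P$ is Noetherian balanced'', fix a Noetherian balanced pair $\langle Q,\gs\rangle$ with $Q\Vdash\gs\leq p$, and let $R=P_X$ be the Cohen poset of nonempty open subsets of $X$ with generic point $\dot x$. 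Exactly as in Theorem~\ref{xitheorem}, for each $i\in 2$ there are a poset $S_i$ of cardinality $<\kappa$, a $Q\times R\times S_i$-name $\eta_i$ for a condition of $P$ below $\gs$, and conditions $q_i\in Q$, a nonempty open set $r_i\subseteq X$, and $s_i\in S_i$ forcing in the product that, in the $\coll(\gw,<\kappa)$-extension, $\eta_i$ forces $\dot x\in\tau_i$ in $P$; the opposite would force $\tau_i$ to be meager below $\gs$, contradicting the choice of $p$.

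The one genuinely new ingredient is the production of an orbit-equivalent pair of $P_X$-generic points landing inside the prescribed open sets $r_0$ and $r_1$. Since all orbits are dense, the set $\{\gg\cdot x\colon\gg\in\gG,\ x\in r_0\}$ is dense in $X$, so it meets $r_1$; fixing $\gg^*\in\gG$ and $x^*\in r_0$ with $\gg^*\cdot x^*\in r_1$ and applying continuity of the action at $\langle\gg^*,x^*\rangle$, we obtain nonempty open sets $U\subseteq\gG$ and $O\subseteq r_0$ with $\gg^*\in U$, $x^*\in O$ and $U\cdot O\subseteq r_1$. Working in $W$, force with $P_\gG\restriction U\times P_X\restriction O$ over $V[K]$ to obtain a generic pair $\langle g,x\rangle$, and set $x_0=x$ and $x_1=g\cdot x$. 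Then $x_0\in O\subseteq r_0$ is $P_X$-generic over $V[K]$; since the action map $\gG\times X\to X$ is continuous and open, $x_1=g\cdot x\in U\cdot O\subseteq r_1$ is also $P_X$-generic over $V[K]$ by \cite[Proposition 3.1.1]{z:geometric}; and $x_1=g\cdot x_0$, so $x_0$ and $x_1$ are orbit equivalent. By Example~\ref{turbulentexample} and the corollary following it, the models $V[K][x_0]$ and $V[K][x_1]$ are mutually Noetherian over $V[K]$.

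From this point the argument is word for word that of Theorem~\ref{xitheorem}. Choose filters $H_0\subseteq Q\times S_0$ and $H_1\subseteq Q\times S_1$ mutually generic over $V[K][x_0][x_1]$ meeting $q_0,s_0$, resp.\ $q_1,s_1$; by Proposition~\ref{productproposition} the models $V[K][x_0][H_0]$ and $V[K][x_1][H_1]$ are mutually Noetherian over $V[K]$. Let $p_0,p_1\in P$ be the realizations of $\eta_0,\eta_1$ in these two models; each lies below the realization of $\gs$ given by the $Q$-coordinate of $H_0$, resp.\ $H_1$. By the balance of $\langle Q,\gs\rangle$ the conditions $p_0,p_1$ have a common lower bound $p'\in P$, and, applying the forcing theorem in the respective models, $p'$ forces $\check x_0\in\tau_0$ and $\check x_1\in\tau_1$. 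Since $p'\leq p_0\leq\gs/H_0\leq p$, the condition $p'$ witnesses the theorem, with $\gg=g$. The only step requiring more than a citation to Theorem~\ref{xitheorem} is the middle paragraph — placing the orbit-equivalent generic pair inside the given pair of open sets while keeping both coordinates $P_X$-generic over $V[K]$ — and density of the orbits together with continuity (hence openness) of the action furnishes precisely the open sets $U$ and $O$ that achieve this.
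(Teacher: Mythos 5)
Your proposal is correct and follows essentially the same route as the paper's proof: the paper likewise reduces to producing an orbit-equivalent pair of Cohen-generic points in $r_0$ and $r_1$ by shrinking $r_0$ (your $O$) and finding $U\subseteq\Gamma$ with $U\cdot O\subseteq r_1$, then invokes Example~\ref{turbulentexample} for mutual Noetherianness and finishes exactly as in Theorem~\ref{xitheorem}. Your explicit continuity argument for producing $U$ and $O$ just spells out what the paper compresses into ``thinning out the open set $r_0$ if necessary.''
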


\noindent This proves Theorem~\ref{firsttheorem}(2).

\begin{proof}
Let $\kappa$ be an inaccessible cardinal. Let $P$ be a Suslin forcing which is Noetherian balanced cofinally below $\kappa$. Let $W$ be the symmetric Solovay model derived from $\kappa$. Work in $W$. Let $p\in P$ be a condition and let $\tau_0, \tau_1$ be $P$-names such that $p\Vdash\tau_0, \tau_1\subset X$ are both non-meager sets. I have to find an element $\gamma\in\Gamma$ and points $x_0, x_1\in X$ such that $\gamma\cdot x_0=x_1$, and a condition stronger than $p$ which forces $\check x_0\in\tau_0$ and $\check x_1\in\tau_1$.

Both $p, \tau$ are definable from some elements of the ground model and an additional parameter $z\in\cantor$. Let $V[K]$ be an intermediate extension obtained by a partial order of cardinality less than $\kappa$ such that $z\in V[K]$, and such that $V[K]\models P$ is Noetherian balanced. Work in $V[K]$. Let $\langle Q, \gs\rangle$ be a Noetherian balanced pair such that $Q\Vdash\gs\leq\check p$. Let $R$ be the Cohen poset of all nonempty open subsets of $X$, adding a point $\dotxgen\in X$. For each $i\in 2$ there must be a poset $S_i$ of cardinality smaller than $\kappa$, a $Q\times R\times S_i$-name $\eta_i$ for a condition in $P$ stronger than $\gs$, and conditions $q_i\in Q$, $r_i\in R$ and $s_i\in S_i$ forcing in the product $Q\times R\times S_i$ the following. In the $\coll(\gw, <\kappa)$-extension, in the poset $P$, $\eta_i\Vdash \dotxgen\in \tau_i$. Otherwise, in the model $W$, if $H\subset Q$ is a filter generic over the model $V[K]$, the condition $\gs/H\in P$ would force $\tau_i$ to be disjoint from the co-meager set of elements of $X$ Cohen-generic over the model $V[K][H]$. This would contradict the initial assumptions on the name $\tau$. 

Now, since the group $\Gamma$ acts on $X$ continuously and with dense orbits, thinning out the open set $r_0\subset X$ if necessary, one can find an open set $U\subset\Gamma$ such that $U\cdot r_0\subset r_1$. In the model $W$, find points $\gamma\in U$ and $x_0\in r_0$ which are $P_\Gamma\times R$-generic over $V[K]$, and let $x_1=\gamma\cdot x_0$. By the turbulence assumption and Example~\ref{turbulentexample}, the models $V[K][x_0]$ and $V[K][x_1]$ are mutually Noetherian extensions of $V[K]$. Let $H_0\subset Q\times S_0$ and $H_1\subset Q\times S_1$ be filters mutually generic over $V[K][x_0][x_1]$ and containing the respective conditions $q_0, s_0, q_1, s_1$. By Proposition~\ref{productproposition}, the models $V[K][x_0][H_0]$ and $V[K][x_1][H_1]$ are mutually Noetherian as well. 

Let $p_0=\eta_0/x_0, H_0$ and $p_1=\eta_1/x_1, H_1$. These are conditions in $P$ in mutually Noetherian extensions extending the conditions $\gs/H_0$ and $\gs/H_1$. By the balance assumption on the pair $\langle Q, \gs\rangle$, the conditions $p_0, p_1$ have a common lower bound in $P$. In the model $W$, the common lower bound forces both $\check x_0\in\tau$ and $\check x_1\notin\tau$, while it is also the case that $\gamma\cdot x_0=x_1$. This completes the proof of the theorem.
\end{proof}

\noindent For the following corollary, recall that a tournament on a set $X$ is a an orianted graph on $X$ which for any two elements of $X$ contains exactly one ordered pair consisting of the two.

\begin{corollary}
\label{tcorollary}
Let $E$ be an orbit equivalence relation of a turbulent action of a Polish group on a Polish space $Y$. In cofinally Noetherian balanced extensions of the choiceless Solovay model, there is no tournament on the quotient space $Y/E$.
\end{corollary}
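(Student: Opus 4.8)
The plan is to argue by contradiction, extracting from a putative tournament a non-meager piece on which the tournament orientation is "one-sided" in a way that contradicts Theorem~\ref{ttheorem}. Suppose $W$ is a cofinally Noetherian balanced extension of the choiceless Solovay model and, towards a contradiction, that $T$ is a tournament on $Y/E$. Pull $T$ back to a relation on $Y$: for $x_0, x_1 \in Y$ which are not $E$-related, exactly one of the ordered pairs $\langle [x_0], [x_1]\rangle$, $\langle [x_1], [x_0]\rangle$ lies in $T$. First I would use the fact (noted in the paragraph after Theorem~\ref{firsttheorem}) that in these extensions every $E$-invariant function from $Y$ to a Polish space stabilizes on a comeager set; more to the point, every $E$-invariant subset of $Y$ is meager or comeager by Theorem~\ref{firsttheorem}(2). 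This is the crucial structural consequence I will exploit.

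The key step is to fix an auxiliary $E$-related pair. Since orbits are dense and $Y$ is Polish, one can find (in $W$, using that $W$ satisfies enough choice for reals, being a Solovay-type model) a countable dense set of orbit representatives, or more simply proceed as follows. For each $x \in Y$, consider the set $A_x = \{z \in Y \colon \langle [x],[z]\rangle \in T\}$; this is $E$-invariant in the second coordinate, hence meager or comeager. Let $B = \{x \in Y \colon A_x \text{ is comeager}\}$. I claim $B$ is itself $E$-invariant (if $x\mathrel E x'$ then $[x]=[x']$ so $A_x = A_{x'}$), hence meager or comeager. Now partition: if $B$ is comeager, set $A_0 = A_1 = B$; if $B$ is meager, then its complement $B^c = \{x \colon A_x \text{ meager}\}$ is comeager, and for $x \in B^c$ the set $\{z \colon \langle[z],[x]\rangle \in T\}$ is comeager (as $T$ is a tournament off the diagonal of $E$, and the $E$-class of $x$ is meager so contributes nothing), so set $A_0 = A_1 = B^c$. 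In either case we have a non-meager (indeed comeager) set $A \subseteq Y$ such that the restriction of $T$ to $[A]/E$ is "transitive in a single direction": for generic $x_0, x_1 \in A$ the pair is oriented one fixed way, say $\langle [x_0],[x_1]\rangle \in T$ whenever $x_1 \in A_{x_0}$ and $x_1$ is outside the meager exceptional set.

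Apply Theorem~\ref{ttheorem} to $A_0 = A_1 = A$: there are points $x_0, x_1 \in A$ and $\gamma \in \Gamma$ with $\gamma \cdot x_0 = x_1$, so $[x_0] = [x_1]$. But then the pair $\{[x_0],[x_1]\}$ is a single point of $Y/E$, and a tournament contains no loop at a point — $T$ orients pairs of \emph{distinct} vertices — so $\langle[x_0],[x_1]\rangle$ is simply not in the domain of $T$. To get a genuine contradiction I should instead choose the two sets to be genuinely separated by the tournament: set $A_0 = A$ and $A_1 = Y \setminus A$ in the case both are non-meager, noting that for $x_0 \in A$, $x_1 \in A^c$ (modulo a meager set) the orientation $\langle[x_0],[x_1]\rangle$ runs one way, while swapping the roles of $A_0, A_1$ forces it the other way, which is the real contradiction with the symmetric conclusion of Theorem~\ref{ttheorem}. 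The main obstacle I anticipate is precisely this bookkeeping: arranging two non-meager sets across which the tournament orientation is \emph{forced} to be asymmetric, so that the symmetric existence statement of Theorem~\ref{ttheorem} (which produces orbit-equivalent points $x_0 \in A_0$, $x_1 \in A_1$ with no control over which came from which) becomes contradictory. Once the two sets are set up correctly — using the meager/comeager dichotomy for $E$-invariant sets to ensure at least one of the relevant "one-sided" sets is non-meager — the contradiction is immediate, since orbit-equivalent $x_0, x_1$ give $[x_0] = [x_1]$, collapsing the tournament pair to a point and violating the strict asymmetry we engineered.
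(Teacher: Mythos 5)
There is a genuine gap, and it sits exactly where you say you anticipate an ``obstacle'': the argument is never closed. The part you do carry out is fine --- from Theorem~\ref{ttheorem} applied to the action on $Y$ one does get that every $E$-invariant subset of $Y$ is meager or comeager, hence each section $A_x=\{z\colon\langle[x],[z]\rangle\in T\}$ and the set $B=\{x\colon A_x\text{ comeager}\}$ obey the dichotomy. But neither of your two proposed applications of Theorem~\ref{ttheorem} yields a contradiction. Taking $A_0=A_1=A$ produces orbit-equivalent $x_0,x_1$, i.e.\ $[x_0]=[x_1]$, which a tournament does not orient (you notice this yourself). Taking $A_0=A$ and $A_1=Y\setminus A$ fails because in your construction $A$ is comeager, so $Y\setminus A$ is meager and the theorem does not apply; and even if both were non-meager, $A$ is $E$-invariant, so orbit-equivalent points across $A$ and its complement would contradict invariance rather than engage the tournament. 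The underlying difficulty is structural: you want $x_0,x_1$ in \emph{distinct} classes with $x_1\in A_{x_0}$ and $x_0\in A_{x_1}$, and to get that from the section-wise information ``$A_x$ is comeager for comeager many $x$'' you would need a Kuratowski--Ulam (Fubini for category) theorem for the arbitrary, possibly non-Baire-property set $\{\langle x_0,x_1\rangle\colon x_1\in A_{x_0}\}$. That is not available in these models; the dichotomy you have is only for $E$-invariant sets and does not reassemble sections into a statement about the product.

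The paper's proof supplies precisely the missing ingredient by moving to the product: the action of $\Gamma\times\Gamma$ on $Y\times Y$ is again turbulent with all orbits dense and meager, so Theorem~\ref{ttheorem} applies \emph{on the product space} and gives the meager/comeager dichotomy for $E\times E$-invariant subsets of $Y\times Y$. The set $B=\{\langle y_0,y_1\rangle\colon\langle[y_0]_E,[y_1]_E\rangle\in T\}$ is $E\times E$-invariant, hence meager or comeager; its flip $B^\perp$ has the same category; since $B\cup B^\perp$ together with the meager set $E$ covers $Y\times Y$, both $B$ and $B^\perp$ are comeager, and any pair in $B\cap B^\perp$ realizes both orientations of $T$ on a pair of classes, which is impossible. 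If you replace your section-wise analysis with this one application of Theorem~\ref{ttheorem} to the product action, the proof goes through.
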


\begin{proof}
Suppose towards a contradiction that $T$ is a tournament on the $Y/E$-space. Let $\Gamma$ be a Polish group turbulently acting on $Y$, inducing the orbit equivalence relation $E$. The product action of $\Gamma\times\Gamma$ on $Y\times Y$ is easily checked to be turbulent as well, inducing the equivalence relation $E\times E$. The set $B=\{\langle y_0, y_1\rangle\in Y\times Y\colon \langle [y_0]_E, [y_1]_E\rangle\in T\}$ is $E\times E$-invariant, and therefore meager or co-meager by Theorem~\ref{ttheorem}. For definiteness, assume that $B$ is co-meager.
Then, the set $B^\perp=\{\langle y_0, y_1\rangle\in Y\times Y\colon \langle y_1, y_0\rangle\in B\}$ is co-meager as well. Let $\langle y_0, y_1\rangle$ be a pair in the intersection of the two co-meager sets $B$ and $B^\perp$. It follows that both $\langle [y_0]_E, [y_1]_E\rangle$ and $\langle [y_1]_E, y_0]_E\rangle$ should be in the tournament $T$, an impossibility.
\end{proof}

\begin{theorem}
In cofinally Noetherian balanced $\gs$-closed extensions of the choiceless Solovay model, outer Lebesgue measure on $[0, 1]$ is continuous in arbitrary increasing unions.
\end{theorem}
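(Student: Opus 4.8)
The plan is to follow the proofs of Theorems~\ref{xitheorem} and~\ref{ttheorem}, with Cohen forcing replaced by random forcing and ``non-meager'' replaced by ``full outer measure'', preceded by a reduction of the statement to a convenient normal form. Let $\kappa$ be inaccessible, $P$ a Suslin $\gs$-closed forcing which is Noetherian balanced cofinally below $\kappa$, and $W$ the choiceless Solovay model; work in $W$ and suppose towards a contradiction that a condition $p\in P$ forces $\langle\dot A_\ga\colon\ga<\dot\gd\rangle$ to be an increasing family of subsets of $[0,1]$ with $\mu^*(\bigcup_\ga\dot A_\ga)>\sup_\ga\mu^*(\dot A_\ga)$. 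In ZF+DC outer measure is continuous in countable increasing unions (intersect a decreasing sequence of Borel hulls of the tails), so the counterexample is forced to have uncountable cofinality; and since $P$ is $\gs$-closed it adds no reals, so outer measure of a fixed set of reals is computed the same in $W$ and in the $P$-extension, the threshold is a ground model real, and after a routine renormalization (pass to a Borel hull of $\bigcup_\ga\dot A_\ga$ with the normalized measure) I may assume $p$ forces $\mu^*(\dot A_\ga)\leq r$ for all $\ga$ and $\mu^*(\bigcup_\ga\dot A_\ga)=1$ for a fixed rational $r<1$. As usual, push the parameters into an intermediate extension $V[K]$ of size below $\kappa$ with $V[K]\models P$ Noetherian balanced, fix a Noetherian balanced pair $\langle Q,\gs\rangle$ with $Q\Vdash\gs\leq\check p$, and let $R$ be random forcing on $[0,1]$, adding a point $\dot x$.

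The catching step is the core. Because $\bigcup_\ga\dot A_\ga$ is forced to have full outer measure, below every $r_0\in R$ it meets the conull set of points $R$-generic over $V[K][H]$ -- otherwise $\gs/H$ would force it to be null inside $r_0$, contradicting the assumptions on $p$ -- so there are a poset $S$ of size below $\kappa$, a $Q\times(R\restriction r_0)\times S$-name $\eta$ for a condition in $P$ below $\gs$, and auxiliary conditions forcing that, in the $\coll(\gw,<\kappa)$-extension, $\eta\Vdash_P\dot x\in\dot A_{\dot\ga}$ for a $P$-name $\dot\ga<\dot\gd$, which one may strengthen to decide. Since $R$ is c.c.c., the admissible $r_0$ include a countable maximal antichain $\langle r_n\colon n\in\gw\rangle$ with data $(S_n,\eta_n,\ga^*_n)$; by uncountable cofinality the ordinal $\ga^*=\sup_n\ga^*_n$ is still forced below $\dot\gd$, so by monotonicity $\eta_n\Vdash_P\dot x\in\dot A_{\ga^*}$ for all $n$. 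Now pass to an extension $W^*$ of $W$ by random forcing on $[0,1]^{\gw_1}$, adding a family $\langle x_\xi\colon\xi<\gw_1\rangle$ that is mutually random over $W$; then $\{x_\xi\colon\xi<\gw_1\}$ has full outer measure in $W^*$, since any positive-measure Borel set there is determined by countably many coordinates and each remaining $x_\xi$ is random over that much. In $W^*$ choose mutually generic filters $H_\xi$ over $V[K][\langle x_\eta\rangle]$ realizing the auxiliary conditions attached to $\eta_{n(\xi)}$, where $n(\xi)$ is the index with $x_\xi\in r_{n(\xi)}$; by the corollary to Example~\ref{suslinexample} and Proposition~\ref{productproposition} the extensions $V[K][x_\xi][H_\xi]$ are pairwise mutually Noetherian. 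By the amalgamation property of Noetherian balanced forcing, in the form for families of pairwise mutually Noetherian extensions supplied by Propositions~\ref{productproposition} and~\ref{overlapproposition}, the conditions $\eta_{n(\xi)}/x_\xi H_\xi\leq\gs/H_\xi$ have a common lower bound $q\leq p$ in $P$. Then $q\Vdash_P\{x_\xi\colon\xi<\gw_1\}\subseteq\dot A_{\ga^*}$, hence $q$ forces $\mu^*(\dot A_{\ga^*})=1$, contradicting $q\leq p\Vdash\mu^*(\dot A_{\ga^*})\leq r<1$.

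The step I expect to be the main obstacle is keeping this model bookkeeping honest: one must verify that $P$, reinterpreted over $W^*$, still adds no reals and still admits the balanced pair's amalgamation, so that the full outer measure of $\{x_\xi\colon\xi<\gw_1\}$ genuinely survives the final $P$-forcing -- this is exactly where the $\gs$-closure of $P$ is used, and where the reinterpretation and preservation technology of Section~\ref{nextensionsection} and of \cite{z:geometric} does the work, just as in the proofs of Theorems~\ref{xitheorem} and~\ref{ttheorem}. The only genuinely new ingredients beyond that template are the reduction to the normalized counterexample and the use of the countable chain condition of random forcing together with the uncountable cofinality of $\dot\gd$ to collect all the caught indices into the single ordinal $\ga^*$.
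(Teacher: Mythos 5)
Your plan founders at its central step. A Noetherian balanced pair $\langle Q,\gs\rangle$ gives a common lower bound for \emph{two} conditions lying in mutually Noetherian extensions; it does not give a common lower bound for an uncountable family of pairwise compatible conditions, and Propositions~\ref{productproposition} and~\ref{overlapproposition} supply no such ``family form'' of amalgamation. In a general partial order, pairwise compatibility of $\aleph_1$ many conditions does not yield a single $q$ below all of them, so the condition $q$ forcing $\{x_\xi\colon\xi<\gw_1\}\subseteq\dot A_{\ga^*}$ simply is not produced by the balance hypothesis. Compounding this, the whole construction takes place in $W^*$, a random-real extension of the Solovay model: the names $\dot A_\ga$, the forcing relation of $P$, and the hypothesis $p\Vdash\mu^*(\dot A_{\ga^*})\leq r$ all live in $W$, and you give no mechanism for transferring the putative contradiction back (nor is $W^*$ itself a cofinally balanced extension to which the theorem's framework applies). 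A further, independent issue is that the theorem concerns \emph{arbitrary} collections linearly ordered by inclusion; in ZF+DC such a collection need not admit a cofinal well-ordered subchain, so the reduction to $\langle\dot A_\ga\colon\ga<\dot\gd\rangle$ with a well-defined uncountable cofinality already proves a weaker statement.

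The paper's proof avoids all of this with a two-point symmetric argument that uses only pairwise amalgamation and exploits the linear ordering directly. First, using $\gs$-closure and DC, it finds an open set $O$ of measure $<\eps$ and a strengthening of $p$ forcing every element of $\tau$ to have small relative outer measure in every box $U$ in which $O$ is small; this is the step that lets one ``catch'' a point $x_0$ inside an element of $\tau$ that is covered by an open set $O_0$ of small relative measure, hence \emph{avoids} a second random point $x_1\notin O_0$. A Fubini computation shows the set of pairs $(x_0,x_1)$ that can be caught symmetrically (each inside an element of $\tau$ missing the other) has positive measure. Amalgamating the two resulting conditions across mutually Noetherian extensions (checked via Example~\ref{suslinexample} and Proposition~\ref{overlapproposition}) produces a condition forcing two $\subseteq$-incomparable elements of $\tau$, contradicting linearity. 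If you want to salvage your approach, you would need to replace the uncountable amalgamation by this kind of pairwise contradiction; the countable antichain and the ordinal $\ga^*$ are not the mechanism the proof runs on.
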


\noindent That is to say, writing $X=[0,1]$ and $\lambda$ for the Lebesgue measure on $X$ and $\lambda^*$ for the outer Lebesgue measure, if $A\subset\power(X)$ is a collection of sets linearly ordered by inclusion, then $\lambda^*(\bigcup A)=\sup_{B\in A}\lambda^*(B)$. This proves Theorem~\ref{firsttheorem}(3). One should note that in most Noetherian balanced extensions, there are non-measurable sets; the conclusion of the theorem does not rule these out.

\begin{proof}
 Let $\kappa$ be an inaccessible cardinal. Let $P$ be a $\gs$-closed Suslin forcing which is Noetherian balanced cofinally below $\kappa$. Let $W$ be the choiceless Solovay model derived from $\kappa$. Work in $W$.

Suppose towards a contradiction that $p\in P$ is a condition, $\eps$ is a real number, $\tau$ is a $P$-name, and the condition $p$ forces $\tau$ to be a collection of subsets of $X$ which is linearly ordered by inclusion, each element of $\tau$ has outer Lebesgue measure smaller than $\eps$, and $\lambda^*(\bigcup\tau)>\eps$.

\begin{claim}
There is an open set $O\subset X$ such that $\lambda(O)<\eps$ and a condition stronger than $p$ forcing $\forall a\in\tau\ \forall U (\lambda(O\cap U)<\lambda(U)/4\to\lambda^*(a\cap U)<\lambda(U)/4$.
\end{claim}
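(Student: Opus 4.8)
The plan is to extract the open set $O$ by a compactness/exhaustion argument over the names for elements of $\tau$, using the $\sigma$-closure of $P$ to iterate countably often. First I would fix, for each rational $q$, the task of finding an open $O$ that ``swallows'' enough outer measure: since each $a\in\tau$ is forced to have $\lambda^*(a)<\eps$, for any condition and any name $\dot a$ for an element of $\tau$ there is an open set $O_{\dot a}$ with $\lambda(O_{\dot a})<\eps$ forced to cover $\dot a$ (take a Borel cover and then an open superset of slightly larger measure). The point is to make a \emph{single} open $O$ work uniformly for all $a\in\tau$ and all subcubes $U$.

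The key step is the following observation: if there were no such $O$ below $p$, then for every open $O$ with $\lambda(O)<\eps$ and every condition $p'\leq p$ there would be $p''\leq p'$, a name $\dot a$ for an element of $\tau$, and a (rational) subcube $U$ such that $p''\Vdash\lambda(O\cap U)<\lambda(U)/4$ but $p''\Vdash\lambda^*(\dot a\cap U)\geq\lambda(U)/4$. I would use this to build, by recursion on $n\in\gw$ using $\sigma$-closure, a decreasing sequence of conditions $p=p_0\geq p_1\geq\cdots$, open sets $O_n$ with $\lambda(O_n)<\eps$, names $\dot a_n$, and subcubes $U_n$, where at stage $n$ one sets $O_n=O_{\dot a_0}\cup\cdots\cup O_{\dot a_{n-1}}$ (or rather an open set of measure $<\eps$ covering $\dot a_0\cup\cdots\cup\dot a_{n-1}$, available because $\tau$ is linearly ordered by inclusion so this union is essentially one $a\in\tau$, hence has outer measure $<\eps$), and then chooses $p_{n+1}\leq p_n$, $\dot a_n$, $U_n$ witnessing the failure of $O_n$. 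Let $p_\gw$ be a lower bound. Then $p_\gw$ forces $\lambda^*(\dot a_n\cap U_n)\geq\lambda(U_n)/4$ for all $n$, while $\dot a_n\cap U_n$ is covered by $O_{n+1}\cap U_n$, which has measure $<\lambda(U_n)/4$ --- contradiction, as outer measure is monotone.

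The main obstacle I anticipate is the bookkeeping that makes the union $\dot a_0\cup\cdots\cup\dot a_{n-1}$ behave like a single element of $\tau$: one must pass to a condition deciding which of the finitely many $\dot a_i$ is $\subseteq$-largest (possible since $\tau$ is forced linearly ordered), and absorb this decision into the recursion, so $\sigma$-closure is genuinely needed to carry the countably many decisions through. A secondary subtlety is the passage from ``$\lambda^*(\dot a)<\eps$ is forced'' to ``there is an open $O$ of measure $<\eps$ forced to cover $\dot a$'': this uses that a forced bound on outer measure, together with $\sigma$-closure (so no new reals, hence Borel covers computed in the ground intermediate model suffice), yields a genuine open cover in the model where we are working. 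Once the recursion and these two points are in place, the quarter-threshold constants are chosen precisely so that $\lambda(O_{n+1}\cap U_n)<\lambda(U_n)/4\leq\lambda^*(\dot a_n\cap U_n)\leq\lambda^*(O_{n+1}\cap U_n)$ is the desired absurdity.
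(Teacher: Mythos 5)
There is a genuine gap, and it sits exactly at your final contradiction. In your recursion, the open set that is small on $U_n$ is $O_n$ (that is what ``witnessing the failure of $O_n$'' gives you: $\lambda(O_n\cap U_n)<\lambda(U_n)/4$), but $O_n$ only covers $\dot a_0\cup\dots\cup\dot a_{n-1}$, not $\dot a_n$. The set that covers $\dot a_n$ is $O_{n+1}$, and you have no control whatsoever over $\lambda(O_{n+1}\cap U_n)$; indeed, since $O_{n+1}$ contains $\dot a_n$ and $\lambda^*(\dot a_n\cap U_n)\ge\lambda(U_n)/4$, one gets $\lambda(O_{n+1}\cap U_n)\ge\lambda(U_n)/4$, the opposite of what you assert. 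A concrete test: take $P$ trivial and $\tau=\{[0,\eps-1/n]\colon n\ge N\}$. This is linearly ordered, each element has outer measure $<\eps$, and your argument never invokes the remaining hypothesis $\lambda^*(\bigcup\tau)>\eps$ --- yet for this $\tau$ the claim is simply false (for any open $O$ with $\lambda(O)<\eps$, a density point of $[0,\eps)\setminus O$ yields a $U$ with $\lambda(O\cap U)<\lambda(U)/4$ while $\lambda([0,\eps-1/n]\cap U)=\lambda(U)$ for large $n$). So no proof that ignores $\lambda^*(\bigcup\tau)>\eps$ can succeed, and yours does ignore it.

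The repair is to reverse the order of quantifiers, which is what the paper does. Fix a countable basis $\langle U_n\colon n\in\gw\rangle$ and, using DC and $\gs$-closure, choose a decreasing sequence $p_n$ and names $\dot a_n$ so that $p_{n+1}$ forces: if some $a\in\tau$ has $\lambda^*(a\cap U_n)\ge\lambda(U_n)/4$, then $\dot a_n$ is such an $a$. These witnesses are chosen \emph{before} any open set enters the picture. Below a lower bound $p_\gw$, the family $\{\dot a_n\colon n\in\gw\}$ cannot be cofinal in $\tau$: otherwise $\bigcup\tau$ would be a countable increasing union of sets of outer measure $<\eps$, hence of outer measure $\le\eps$ by countable continuity of $\lambda^*$, contradicting $\lambda^*(\bigcup\tau)>\eps$. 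Hence there is $\dot a_\gw\in\tau$ containing every $\dot a_n$, with $\lambda^*(\dot a_\gw)<\eps$, and one single open $O\supseteq\dot a_\gw$ with $\lambda(O)<\eps$ covers \emph{all} the witnesses at once. Now the verification closes: if $\lambda(O\cap U_n)<\lambda(U_n)/4$ and some $a\in\tau$ had $\lambda^*(a\cap U_n)\ge\lambda(U_n)/4$, then $\dot a_n$ would too, yet $\dot a_n\subseteq O$ forces $\lambda^*(\dot a_n\cap U_n)\le\lambda(O\cap U_n)<\lambda(U_n)/4$. Your observations about deciding the largest of finitely many $\dot a_i$ and about extracting open covers from forced bounds on outer measure are fine, but they are secondary; the missing idea is the non-cofinality step driven by $\lambda^*(\bigcup\tau)>\eps$.
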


\begin{proof}
Using DC in $W$, find a descending sequence $\langle p_n\colon n\in\gw\rangle$ starting with $p=p_0$ and a sequence $\langle \dot a_n\colon n\in\gw\rangle$ such that for every $n\in\gw$, $p_{n+1}$ forces that $\dot a_n\in\tau$ and if there is an $a\in\tau$ such that $\lambda^*(a\cap U_n)\geq 1/4$ then $\dot a_n$ is such. Use the $\gs$-closure assumption on the poset $P$ to find a lower bound $p_\gw$ of the sequence. 

Now, $p_\gw\Vdash\{\dot a_n\colon n\in\gw\}$ is not a cofinal subset of $\tau$. The reason is that $\lambda^*$ is continuous in increasing unions of countable length. Therefore, strengthening $p_\gw$ if necessary, it is possible to find a name $\dot a_\gw$ and an open set $O\subset X$ such that $\lambda(O)<\eps$ and $p_{\gw}$ forces $\dot a_\gw\in\tau$, $\bigcup_n\dot a_n\subset\dot a_\gw$, and $\dot a_\gw\subseteq O$. The condition $p_\gw$ and the set $O\subset X$ are as required.
\end{proof}

Let $O\subset X$ be an open set as in the claim, and strengthen the condition $p$ as to force the conclusion of the claim. Now, the name $\tau$ is definable from a parameter $z\in\cantor$ and additional parameters from the ground model. Let $V[K]$ be an intermediate extension containing $z$, $p$, and $O$ and such that $P$ is balanced in $V[K]$. In the model $V[K]$, let $\langle Q, \gs\rangle$ be a Noetherian balanced pair in the poset $P$ such that $Q\Vdash\gs\leq p$. 

The following considerations take place in the model $W$, using the fact that every set is Lebesgue measurable in $W$.
Let $B=\{x\in X\colon\exists G\subset Q$ generic over $V[K]\ \exists p\leq\gs/G\ p\Vdash x\in\bigcup\tau\}$. 
Note that $\lambda(B)\geq\eps$ since whenever $G\subset Q$ is generic over $V[K]$, then the condition $\gs/G$ forces $\bigcup\tau$ to be a subset of $B$. Thus, $\lambda(B\setminus O)>0$ and by the Lebesgue density theorem, there must be a basic open set $U\subset X$ such that $\lambda(B\setminus O\cap U)>3\lambda(U)/4$. Note that $p$ forces that every element of $\tau$ has relative outer measure in $U$ smaller than $1/4$ by the choice of the open set $O$.

Let $C_0=\{\langle x_0, x_1\rangle\in U\times U\colon\exists G_0\subset Q$ generic over $V[K]\ \exists p_0\leq\gs/G_0\ \exists O_0\subset X$ of relative open measure in $U$ smaller than 1/4 such that $x_1\notin O_0$ is random over $V[x_0][G_0, p_0, O_0]$ and $p_0\Vdash x_0$ belongs to some element of $\tau$ which is a subset of $O_0$. 

\begin{claim}
The relative product measure of $C_0$ in $U\times U$ is greater than $1/2$.
\end{claim}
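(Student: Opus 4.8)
The plan is to estimate the vertical sections of $C_0$. I will show that for every $x_0\in(B\setminus O)\cap U$ the section $(C_0)_{x_0}=\{x_1\in U\colon\langle x_0,x_1\rangle\in C_0\}$ has $\lambda$-measure greater than $3\lambda(U)/4$. Granting this, the claim follows by integration: in the choiceless Solovay model $W$ every subset of $X\times X$ is Lebesgue measurable, so Fubini's theorem applies to $C_0$, and since $\lambda\big((B\setminus O)\cap U\big)>3\lambda(U)/4$ by the choice of $U$ via the Lebesgue density theorem,
\[
(\lambda\times\lambda)(C_0)\ \geq\ \int_{(B\setminus O)\cap U}\lambda\big((C_0)_{x_0}\big)\,d\lambda(x_0)\ >\ \frac{3\lambda(U)}{4}\cdot\frac{3\lambda(U)}{4}\ =\ \frac{9\,\lambda(U)^2}{16}\ >\ \frac{\lambda(U)^2}{2}.
\]
Crucially, this argument only needs each section to be large; no simultaneous choiceless selection of witnesses across all $x_0$ is required, which is what lets everything run inside $W$.

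So fix $x_0\in(B\setminus O)\cap U$. By the definition of $B$ there are a filter $G_0\subset Q$ generic over $V[K]$ and a condition $p_0\leq\gs/G_0$ in $P$ with $p_0\Vdash\check x_0\in\bigcup\tau$. Strengthening $p_0$, fix a $P$-name $\dot a$ with $p_0\Vdash\dot a\in\tau$ and $p_0\Vdash\check x_0\in\dot a$. Since $\gs/G_0\leq p$ we have $p_0\leq p$, so $p_0$ forces $\lambda^*(\dot a\cap U)<\lambda(U)/4$ by the property secured for $O$ (applied to the basic box $U$, which has $\lambda(O\cap U)<\lambda(U)/4$). One further routine strengthening of $p_0$ pins down an open set $O_0\subseteq X$, coded by a real of $W$, with $p_0\Vdash\dot a\subseteq O_0$ and $\lambda(O_0\cap U)<\lambda(U)/4$: approximate $\dot a\cap U$ from outside by an open subset of $U$ of small measure, adjoin the complement of the closure of $U$ to absorb the part of $\dot a$ lying outside $U$, and handle the $\lambda$-null boundary of $U$ separately. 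The triple $\langle G_0,p_0,O_0\rangle$ then witnesses, for $x_0$, every requirement in the definition of $C_0$ except the two demands placed on $x_1$.

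It remains to see that, with this witness fixed, the set of $x_1\in U$ with $\langle x_0,x_1\rangle\in C_0$ is co-small in $U$. Such an $x_1$ must avoid $O_0$ and be random over $N:=V[x_0][G_0,p_0,O_0]$. Now $\{x_1\in U\colon x_1\in O_0\}$ has measure $\lambda(O_0\cap U)<\lambda(U)/4$. For the randomness requirement, note that $V[K][G_0]$ is a generic extension of $V$ by a forcing of cardinality smaller than $\kappa$ (since $V[K]$ is such an extension and $|Q|<\kappa$), and that inside $W$ the reals $x_0,p_0,O_0$ are captured by a single small forcing over $V[K][G_0]$ by the standard Solovay-model argument; hence $N$ is contained in a generic extension $V[r']$ of $V$ by a forcing of cardinality $<\kappa$. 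Since randomness over $V[r']$ implies randomness over the smaller model $N$, and the non-random reals over any such small extension form a $\lambda$-null set in $W$, the set $\{x_1\in X\colon x_1\text{ not random over }N\}$ is null. Therefore the good $x_1\in U$ form a set of measure at least $\lambda(U)-\lambda(U)/4-0>3\lambda(U)/4$, which is the promised section bound.

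I expect the main obstacle to be the last paragraph: one must check carefully that $N$ really is small enough over $V$ for ``random over $N$'' to be a co-null condition in $W$, i.e.\ the bookkeeping of absorbing $x_0,p_0,O_0$ together with the filter $G_0$ into one generic extension of $V$ by a forcing of size $<\kappa$, before quoting the standard measure-theoretic fact about random reals over small inner models. By comparison, deciding the open set $O_0$ from the condition $p_0$ and invoking Fubini in the choiceless setting (legitimate precisely because every subset of $X\times X$ is measurable in $W$) are routine.
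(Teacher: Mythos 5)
Your proposal is correct and follows essentially the same route as the paper: a Fubini/section estimate in which, for each $x_0$ in the measure-$>3\lambda(U)/4$ set $(B\setminus O)\cap U$, the witnesses $G_0,p_0$ are strengthened to pin down an open $O_0$ of relative measure $<1/4$ in $U$, after which the section consists of all $x_1\in U\setminus O_0$ random over the small model, giving $3/4\cdot 3/4>1/2$. The extra care you take in absorbing $x_0,G_0,p_0,O_0$ into a single generic extension of $V$ by a forcing of size $<\kappa$ (so that ``random over $N$'' is co-null in $W$) is a point the paper leaves implicit, but it is the standard Solovay-model argument and your treatment of it is fine.
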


\begin{proof}
Use the Fubini theorem. Let $x_0\in B\cap U\setminus O$ be a point random over $V[K]$--there are relative measure $>3/4$ of such points in the set $U$. Let $G_0, p_0$ witness the membership of $x_0$ in $B$, and strengthen $p_0$ if necessary to find an open set $O_0\subset X$ of relative measure in $U$ smaller than $1/4$ such that $p_0\Vdash\check x_0$ belongs to some element of $\tau$ which is a subset of $O_0$. Now, any point $x_1\in U$ which is random over $V[K][x_0, G_0, p_0, O_0]$ and does not belong to $O_0$ (and there is more than relative measure $3/4$ of these) belongs to the vertical section of $C_{x_0}$. The relative product measure of $C$ is thus greater than $3/4\cdot 3/4=1/2$ as desired.
\end{proof}

Let $D=\{\langle x_0, x_1\rangle\in U\times U\colon \langle x_0, x_1\rangle\in C$ and $\langle x_1, x_0\rangle\in C\}$. By the claim, the set $D$ has positive product measure. Working in the model $W$, let $\langle x_0, x_1\rangle\in D$ be a pair random over $V[K]$. In $V[K][x_0, x_1]$, there must be a poset $R_0$ of cardinality smaller than $\kappa$ adding witnesses to the statement that $\langle x_0, x_1\in C_0$. There also must be a poset $R_1$ of cardinality smaller than $\kappa$ adding witnesses to the statement that $\langle x_1, x_0\rangle\in C$. Let $H_0\subset R_0$ and $H_1\subset R_1$ be filters mutually generic over $V[K][x_0, x_1]$. Let $G_0, p_0, O_0$ be the witnesses to $\langle x_0, x_1\rangle$ added by $H_0$. Let $G_1, p_1, O_1$ be the witnesses to $\langle x_1, x_0\rangle$ added by $H_1$. 

Note that $V[K][x_0, G_0, p_0, O_0]$ and $V[K][x_0, x_1]$ are mutually Noetherian over $V[K][x_0]$, and $V[K][x_0, G_0, p_0, O_0]$ and $V[K][x_1]$ are mutually Noetherian over $V[K]$ by Example~\ref{suslinexample} applied in the model $V[K]$ and $V[K][x_0]$ respectively to the Suslin c.c.c.\ poset of Borel non-null sets ordered by inclusion. The same assertions are true with $0,1$ interchanged.
Proposition~\ref{overlapproposition} shows that the models $V[K][x_0, G_0, p_0, O_0]$ and $V[K][x_1, G_1, p_1, O_1]$ are mutually Noetherian. 

The balance assumption on the pair $\langle Q, \gs\rangle$ now shows that the conditions $p_0, p_1$ are compatible. Their common lower bound forces that there are two elements of $\tau$, one of them containing $x_0$ but not $x_1$, the other containing $x_1$ but not $x_0$. In other words, $\tau$ is not linearly ordered by inclusion, a contradiction.
\end{proof}

\begin{corollary}
In cofinally Noetherian balanced extensions of the choiceless Solovay model, the Lebesgue null ideal is closed under well-ordered unions.
\end{corollary}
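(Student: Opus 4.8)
The plan is to derive the corollary from the preceding theorem by a routine transfinite induction, so that essentially no new work is needed. Let $\langle A_\alpha\colon\alpha<\delta\rangle$ be a well-ordered family of Lebesgue null subsets of $X=[0,1]$; I want to show that $\bigcup_{\alpha<\delta}A_\alpha$ is null. (If the null ideal is taken on $\mathbb{R}$ rather than on $[0,1]$, the statement reduces to this case by covering $\mathbb{R}$ with countably many unit intervals and invoking countable additivity of the null ideal, which holds in ZF+DC.) First I would form, by transfinite recursion in ZF, the partial unions $C_\alpha=\bigcup_{\beta<\alpha}A_\beta$ for $\alpha\le\delta$; this is legitimate, since the restriction of the operation $\alpha\mapsto C_\alpha$ to any ordinal is a set by Replacement. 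Because $C_\delta=\bigcup_{\alpha<\delta}A_\alpha$, it suffices to prove $\lambda^*(C_\alpha)=0$ for every $\alpha\le\delta$, and I would do this by induction on $\alpha$.

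The three cases are all immediate. For $\alpha=0$ we have $C_0=\emptyset$. At a successor $\alpha=\beta+1$ we have $C_\alpha=C_\beta\cup A_\beta$, whence $\lambda^*(C_\alpha)\le\lambda^*(C_\beta)+\lambda^*(A_\beta)=0$ by the finite subadditivity of outer measure, available in ZF. At a limit $\alpha$, the collection $\{C_\beta\colon\beta<\alpha\}$ is a subfamily of $\power(X)$ linearly ordered by inclusion, each member of which is null by the induction hypothesis, and $\bigcup\{C_\beta\colon\beta<\alpha\}=C_\alpha$; the preceding theorem, continuity of outer Lebesgue measure in arbitrary increasing unions, available because the extension is $\gs$-closed, then gives $\lambda^*(C_\alpha)=\sup_{\beta<\alpha}\lambda^*(C_\beta)=0$. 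This closes the induction, and the corollary follows.

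What I would flag is that there is no genuine obstacle in the corollary itself: all the content lives in the preceding theorem, which in particular disposes of limit stages of countable cofinality on exactly the same footing as uncountable ones, so that countable additivity of the null ideal never has to be invoked separately inside the induction. The only matters to verify are bookkeeping, namely that the partial-union operation is defined by a legitimate transfinite recursion in ZF and that at each limit stage the theorem is applied to an honest chain of subsets of $[0,1]$, and both are clear from the construction.
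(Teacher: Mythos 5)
Your proof is correct and is exactly the paper's argument: form the partial unions by transfinite recursion and induct, using finite subadditivity at successors and the preceding continuity theorem at limits. The paper's own proof is just a one-sentence sketch of this same induction, so no further comment is needed.
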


\begin{proof}
Let $\langle B_\ga\colon \ga\in\kappa\rangle$ be a well-ordered sequence of Lebesgue null sets. By induction on $\ga\leq\kappa$ argue that the union of the sets on the sequence up to $\ga$ is null, using the continuity of outer Lebesgue measure under increasing unions at every stage.
\end{proof}

\section{Examples II}
\label{example2section}

In this section, I produce several classes of Noetherian balanced Suslin forcings. This will verify all items of Theorem~\ref{secondtheorem}.

\begin{example}
\label{hamelexample}
Let $X$ be a Polish field and $F$ be a countable subfield. Let $P$ be the partial order of countable subsets of $X$ which are linearly over $F$. The ordering is reverse inclusion \cite[Example 6.3.6]{z:geometric}. Then $P$ is $\gs$-closed, Suslin and Noetherian balanced.
\end{example}

\begin{proof}
If $p\subset X$ is a Hamel basis of $X$ over $F$, $V[G_0]$ and $V[G_1]$ are generic extensions such that $V[G_0]\cap V[G_1]\cap\cantor=V\cap\cantor$ and $p_0\in V[G_0]$ and $p_1\in V[G_1]$ are conditions containing $p$ as a subset, then $p_0, p_1$ are compatible, as the inspection of the proof of \cite[Theorem 6.3.4]{z:geometric} reveals. At the same time, if $V[G_0]$ and $V[G_1]$ are mutually Noetherian extensions, then $V[G_0]\cap V[G_1]\cap\cantor=V\cap\cantor$ holds by Proposition~\ref{ahproposition}(1).
\end{proof}

\noindent It is clear that this argument extends to cover most posets which are placid in the sense of \cite[Definition 9.3.1]{z:geometric}, such as the posets adding a maximal acyclic subset to a given Borel graph \cite[Example 6.3.7]{z:geometric}, or posets adding a selector to a given countable Borel equivalence relation.

\begin{example}
\label{fieldtheorem}
Let $X$ be a Polish field and $F$ be a countable subfield. Let $P$ be the partial order of countable algebraically free over $F$ subsets of $X$, ordered by reverse inclusion \cite[Example 6.3.10]{z:geometric}. Then $P$ is $\gs$-closed, Suslin and Noetherian balanced.
\end{example}

\begin{proof}
Let $p$ be a transcendence basis for $X$ over $F$; it will be enough to show that $\langle \coll(\gw, X), \check p\rangle$ is a Noetherian balanced pair. To this end, suppose that $V[G_0]$, $V[G_1]$ are mutually Noetherian extensions, containing respective algebraically free sets $p_0, p_1$ containing $p$; it is necessary to show that $p_0\cup p_1$ is algebraically free. Towards a contradiction, suppose that this fails and let $\phi(\bar v_0, \bar v_1)$ be a nonzero polynomial with coefficients in $F$, and $\bar x_0\in p_0$ and $\bar x_1\in p_1\setminus p_0$ are tuples such that $\phi(\bar x_0, \bar x_1)=0$. By Proposition~\ref{ahproposition}(3), there is a tuple $\bar x_0'$ in $V$ such that $\phi(\bar x_0',\bar x_1)=0$ holds. Since $p$ is a transcendence basis for $X$ over $F$ in $V$, all elements of $\bar x'_0$ are algebraic over $p$. In conclusion, the tuple $\bar x_1$ satisfies a nontrivial algebraic identity over $p$, violating the assumption that $p_1$ is a set algebraically free over $F$.
\end{proof}

\begin{example}
Let $\Gamma$ be a closed Noetherian graph on a Polish space $X$ with no uncountable cliques. The coloring poset of \cite{z:ngraphs} is $\gs$-closed, Suslin and Noetherian balanced.
\end{example}

\begin{proof}
A closed graph $\Gamma$ on a Polish space $X$ is Noetherian if the set $C\subset X\times X$ consisting of all pairs $\langle x_0, x_1\rangle$ such that $x_0=x_1$ or $x_0\mathcal\Gamma x_1$ is a Noetherian subbasis. An inspection of the proof of balance in \cite{z:ngraphs} shows that the only feature of mutually generic extensions $V[G_0], V[G_1]$ used is that for every pair of $\Gamma$-connected points $x_0\in V[G_0]$ and $x_1\in V[G_1]$, there are points $x_1'\in V$ $\Gamma$-connected to $x_0$ and arbitrarily close to $x_1$. This holds already when the models $V[G_0], V[G_1]$ are mutually Noetherian. To see this, consider the set $C(V[G_0], x_1)=C(V, x_1)$. All of its points are $\Gamma$-connected (or equal) to $x_0$. If $O\subset X$ is any basic open neighborhood of $x_1$, a Mostowski absoluteness argument shows that there must be a point $x'_1\in C(V, x_1)\cap O$ in $V$ as there is such a point, namely $x_1$, in $V[G_1]$. The point $x'_1$ is as desired.
\end{proof}

\begin{example}
Let $\Gamma$ be a $\gs$-algebraic graph on a Euclidean space $X$ without uncountable cliques. The coloring poset $P$ of \cite{z:distance} is $\gs$-closed, Suslin and Noetherian balanced.
\end{example}

\begin{example}
Let $P$ be the coloring poset for the hypergraph of isosceles triangles in $\mathbb{R}^2$ isolated in \cite{zhou:isosceles}. The poset is $\gs$-closed and Noetherian balanced.
\end{example}

\begin{example}
Let $\Gamma$ be a $\gs$-algebraic redundant hypergraph on a Euclidean space $X$. The coloring poset $P$ for $\Gamma$ in \cite{z:redundant} is $\gs$-closed and Noetherian balanced.
\end{example}

\begin{proof}
An inspection of the proof of balance in the above three examples shows that the only property of generic extensions $V[G_0], V[G_1]$ used is the coheir property of Proposition~\ref{ahproposition}(3).
\end{proof}

\section{Examples III}
\label{example3section}

As a final note in this paper, it is interesting to see how the conclusions of Theorem~\ref{firsttheorem} can be violated by balanced posets which are not Noetherian balanced. The conclusions of the first two items of Theorem~\ref{firsttheorem} are easiest to violate using the following theorem.

\begin{theorem}
Let $\Gamma$ be a Borel meager graph on a Polish space $X$. In a balanced extension of the Solovay model, there are non-meager sets $A_0, A_1\subset X$ such that $(A_0\times A_1)\cap\Gamma=0$.
\end{theorem}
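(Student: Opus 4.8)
The plan is to realize the two non-meager sets $A_0,A_1$ as the sets of points added by two ``parallel'' copies of Cohen forcing for $X$, arranged so that no $\Gamma$-edge ever appears between them. The natural poset to balance is the one that adds a function $f$ from an uncountable index set into $X$ all of whose finitely many values are mutually Cohen-generic; in the Solovay model this poset is balanced, with the balanced conditions being (virtual) conditions coding countably many such generic points together with the promise that any further generic point is Cohen over the countable model coding them. One then wants to argue that given two balanced conditions (coming from mutually Noetherian, indeed here merely mutually generic, extensions), the corresponding generic points can always be amalgamated into a single condition \emph{provided} the two new Cohen points are chosen not to be $\Gamma$-related; the obstruction to balance is exactly the existence of such a $\Gamma$-edge. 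The key point is that $\Gamma$ is meager: by the Kuratowski--Ulam theorem a Borel meager graph has the property that for comeagerly many $x_0$, the section $\{x_1\colon x_0\mathrel\Gamma x_1\}$ is meager, so a Cohen-generic $x_1$ over a model containing a Cohen-generic $x_0$ automatically avoids the section $\Gamma_{x_0}$. This is what lets the amalgamation go through.

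Concretely, here is the order of steps. First, fix the Solovay model $W$ derived from an inaccessible $\kappa$, and take $P$ to be the poset whose conditions are finite partial functions $p\colon \gw_1\rightharpoonup \mathcal O$, where $\mathcal O$ ranges over nonempty open subsets of $X$, ordered by coordinatewise refinement together with enlarging the domain; this is a Suslin, in fact $\sigma$-$*$-linked-type product of Cohen forcings, and by the standard balanced-forcing machinery of \cite{z:geometric} it is balanced in $W$ (the balanced virtual conditions are the ``total'' functions $\vec x=\langle x_\alpha\colon\alpha\in\gw_1\rangle$ with all finite subtuples mutually Cohen). Second, let $\dot A_i$ for $i\in 2$ be the names for two disjoint uncountable blocks of the generic points, say $\dot A_0=\{\dot x_\alpha\colon\alpha\in E_0\}$, $\dot A_1=\{\dot x_\alpha\colon\alpha\in E_1\}$ with $E_0,E_1$ disjoint uncountable; each is forced non-meager since a single Cohen-generic point avoids any ground-model-coded meager set, and a genericity argument shows $\dot A_i$ meets every non-meager Borel set of the extension. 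Third, verify $(\dot A_0\times\dot A_1)\cap\Gamma=0$: given any two indices $\alpha\in E_0$, $\beta\in E_1$, the point $x_\beta$ is Cohen-generic over a model containing $x_\alpha$ and the code for $\Gamma$; since $\Gamma$ is meager, $\Gamma_{x_\alpha}$ is meager (this uses Kuratowski--Ulam and the fact that $x_\alpha$ is itself generic enough that its section is meager), hence $x_\beta\notin\Gamma_{x_\alpha}$, i.e. $\langle x_\alpha,x_\beta\rangle\notin\Gamma$. Fourth, descend to $W$ and use balance exactly as in the proofs of Theorems~\ref{xitheorem} and~\ref{ttheorem}: any condition $p$ forcing a counterexample would, via two balanced conditions coming from mutually generic (hence mutually Noetherian) extensions and the amalgamation property of $P$, be contradicted.

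The main obstacle I expect is the third step, and more precisely making the meagerness of the section $\Gamma_{x_\alpha}$ rigorous for the \emph{specific} Cohen point $x_\alpha$ rather than just ``comeagerly many'' points. The clean way is: $\Gamma$ meager Borel means $\Gamma\subseteq\bigcup_n F_n$ with each $F_n$ closed nowhere dense with Borel code in the ground model; by Kuratowski--Ulam each $F_n$ has meager section $\{x_1\colon x_0\mathrel{F_n}x_1\}$ off a meager set of $x_0$'s coded in the ground model, so for $x_\alpha$ Cohen-generic this holds, and then a Cohen-generic $x_\beta$ over $V[x_\alpha]$ avoids each $(F_n)_{x_\alpha}$. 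A secondary subtlety is checking that $\dot A_i$ is forced non-meager in the \emph{final} model rather than just avoiding ground-model meager sets — here one uses that in the Solovay model ``non-meager'' can be witnessed by meeting every closed-nowhere-dense-coded set, and the product structure of $P$ makes the relevant density arguments routine. Everything else (Suslinness of $P$, balance of the total conditions, the amalgamation-across-mutually-generic-extensions step) is a direct application of the framework already set up in \cite{z:geometric} and in Section~\ref{preservationsection} above.
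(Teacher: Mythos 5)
Your construction has a fundamental flaw: the poset you force with is not balanced, so the model you produce is not a \emph{balanced extension of the Solovay model}, which is what the theorem asserts. A finite-support product of Cohen forcings adds reals below every condition, and no such poset can be balanced in the sense of \cite{z:geometric}: already for a single copy of Cohen forcing, two conditions $p_0\leq p$ and $p_1\leq p$ lying in mutually generic extensions are just two nonempty open subsets of $p$ and need not be compatible, so no pair $\langle Q,\sigma\rangle$ can serve as a balanced pair. The ``total functions $\vec x$'' you propose as balanced virtual conditions are not conditions, nor limits of descending sequences of conditions, in a finite-condition poset; and the definition of balance gives you no freedom to ``choose'' the new generic points so as to avoid a $\Gamma$-edge --- you must amalgamate \emph{arbitrary} conditions handed to you in the two mutually generic extensions, and nothing in your poset prevents a point placed into $A_0$ in $V[G_0]$ from being $\Gamma$-connected to a point placed into $A_1$ in $V[G_1]$. (Your Kuratowski--Ulam computation showing that the generic points themselves span no edge is correct, but it establishes the conclusion only in the Cohen-product extension, which is not a model of the kind the theorem is about.)

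The paper's proof repairs exactly these two defects. The conditions are triples $\langle a_p,b_p,c_p\rangle$ of \emph{countable} subsets of $X$ with $(a_p\times b_p)\cap\Gamma=0$, so the poset is $\sigma$-closed and Suslin; the third coordinate $c_p$ is a side condition promising that no point later added to $a$ or $b$ is $\Gamma$-connected to any point of $c_p$. Balance is witnessed by the virtual condition $\langle a_p,b_p,X\cap V\rangle$: if a point $x_0\in a_{p_0}\setminus a_p$ in $V[G_0]$ were $\Gamma$-connected to a point $x_1\in b_{p_1}\cup c_{p_1}$ in $V[G_1]$, a mutual-genericity argument together with Mostowski absoluteness produces a ground-model point $x_1'$ with $\langle x_0,x_1'\rangle\in\Gamma$, contradicting the promise attached to $c_p=X\cap V$. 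Meagerness of $\Gamma$ enters only at the end, in the density argument showing $A$ and $B$ are non-meager: after discarding a meager set one may assume all sections of $\Gamma$ are meager, and then, given nowhere dense sets $C_n$, one can extend a condition by adding to $a_p$ a point outside $\bigcup_n C_n$ and outside the meager set of points $\Gamma$-connected to $b_p\cup c_p$. If you want to keep the spirit of your approach, the lesson is that the two non-meager sets must be built from countable approximations inside a $\sigma$-closed poset carrying an explicit promise about ground-model points, not from an uncountable family of Cohen generics.
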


\begin{proof}
Without loss, assume that $\Gamma$ is $F_\sigma$. Removing the meager set of all points $x\in X$ such that the set $\{y\in X\colon x\mathrel\Gamma y\}$ is non-meager from the space $X$ if necessary, we may assume that for the set $\{y\in X\colon x\mathrel\Gamma y\}$ is meager for every $x\in X$.

Let $P$ be the partial order of all triples $p=\langle a_p, b_p, c_p\rangle$ such that $a_p, b_p, c_p\subset X$ are all countable sets, $a_p\cap b_p=0$, and $(a_p\times b_p)\cap\Gamma=0$. The ordering is defined by $q\leq p$ if $a_p\subseteq a_q$, $b_p\subseteq b_q$, $c_p\subseteq c_q$, and no element of $c_p$ is $\Gamma$-connected to any element of $(a_q\setminus a_p)\cup (b_q\setminus b_p)$. It is not difficult to check that $P$ is a $\gs$-closed Suslin poset.

\begin{claim}
$P$ is a balanced forcing.
\end{claim}

\begin{proof}
Let $p\in P$ be a condition. It will be enough to show that the pair $\langle \coll(\gw, X), \langle\check a_p, \check b_p, (X\cap V)\rangle\rangle$ is balanced in the poset $P$.

Indeed, let $V[G_0]$ and $V[G_1]$ be mutually generic extensions containing conditions $p_0=\langle a_0, b_0, c_0\rangle\in V[G_0]$ and $p_1=\langle a_1, b_1, c_1\rangle\in V[G_1]$ respectively, both stronger than $\langle a_p, b_p, (X\cap V)\rangle$. To show that $\langle a_0\cup a_1, b_0\cup b_1, c_0\cup c_1\rangle$ is a common lower bound of the conditions $p_0, p_1$, It is only necessary to observe that no element in $a_0\setminus a_p$ is $\Gamma$-connected with any element of $b_1\cup c_1$; the other verification items are proved in a symmetric way.

Thus, towards a contradiction assume that $x_0\in a_0\setminus a_p$ and $x_1\in b_1\cup c_1$ are $\Gamma$-connected. Find a closed set $C\subset\Gamma$ such that $\langle x_0, x_1\rangle\in C$. Let $e_0$ be the set of basic open sets $O\subset X$ such that $\{x_0\}\times O\cap\Gamma=0$; let $e_1$ be the set of basic open sets $O\subset X$ such that $x_1\in O_1$. The sets $e_0\in V[G_0]$ and $e_1\in V[G_1]$ are disjoint. By a mutual genericity argument \cite[Proposition 1.7.9]{z:geometric}, there are disjoint ground model sets $f_0\supset e_0$ and $f_1\supset e_1$. By a Mostowski absoluteness argument between $V[G_1]$ and $V$ there is a point $x'_1\in V$ such that every basic open neighborhood of it belongs to $f_1$--since the point $x_1\in V[G]$ is such. It follows that $\langle x_0, x'_1\rangle\in\Gamma$, and this contradicts the assumption that $p_0\leq \langle a_p, b_p, X\cap V\rangle$.
\end{proof}

\noindent If $G\subset P$ is a filter generic over the choiceless Solovay model (or indeed any other model), let $A=\{a_p\colon p\in G\}$ and $B=\{b_p\colon p\in G\}$. It is clear from the definition of the poset $P$ that $(A\times B)\cap\Gamma=0$. To complete the proof of the theorem, it is enough to argue that the sets $A, B\subset X$ are non-meager.

For this, return to the ground model, let $p\in P$ be a condition, and let $C_n\subset X$ be closed nowhere dense sets for $n\in\gw$. By a density argument, it is only necessary to find a condition $q\leq p$ such that $a_q$ contains a point in $X\setminus\bigcup_nC_n$. The arguument for $b_q$ is the same. To produce the condition $q$, note that the set $D=\{y\in X\colon \exists z\in b_p\cup c_p\ z\mathrel\Gamma y\}$ is meager by the initial assumptions on the graph $\Gamma$. Just pick a point $x\in X$ which does not belong to the meager set $\bigcup_nC_n\cup D\cup b_p\}$, let $q=\langle a_p\cup\{x\}, b_p, c_p\rangle$, and note that $q\leq p$ holds. The proof is complete. 
\end{proof}

\noindent  The conclusion of Theorem~\ref{firsttheorem}(3) is more difficult to violate.

\begin{theorem}
Let $X$ be a Polish space and $I$ a suitably definable ideal on $X$ such that every countable subset of $X$ is a subset of a $G_\gd$ set in $I$. In a balanced extension of the Solovay model, $X$ is the union of a collection of $I$-small sets, linearly ordered by inclusion.
\end{theorem}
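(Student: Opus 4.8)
The plan is to produce a balanced Suslin forcing $P$ whose generic object is a $\subseteq$-increasing family $\mathcal{A}$ of $I$-small sets with $\bigcup\mathcal A=X$; this at once refutes the continuity of outer measure under increasing unions (take $I$ to be the null ideal and note $\lambda^*(A)=0$ for each $A\in\mathcal A$ while $\lambda^*(\bigcup\mathcal A)=1$), so it violates Theorem~\ref{firsttheorem}(3). A condition of $P$ is a countable $\subseteq$-increasing sequence of Borel sets in $I$ (each given with a Borel code, so that $I$-membership is absolute); the ordering is the natural one allowing the generic sequence to be extended and its members to be enlarged, arranged so that the generic filter still yields a genuinely linearly ordered family. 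Using the $G_\delta$ hypothesis together with downward closure one gets that every countable set, in particular every singleton, lies in a Borel set in $I$; this, plus the fact that a countable union of Borel sets in $I$ is again a Borel set in $I$ (here one uses that $I$ is a $\sigma$-ideal), makes $P$ $\sigma$-closed and Suslin, and makes the following density argument work: given a condition and a point $x\in X$, one may extend the sequence by a member containing $x$ and the union of all earlier members. Hence in the $P$-extension the generic sequence is cofinal in $I$, so its members form a $\subseteq$-linearly ordered family of $I$-small sets whose union contains every point of $X$; this proves the conclusion once $P$ is shown to be a balanced Suslin forcing.

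The substantial point is therefore to verify that $P$ is balanced. Unlike the closed‑graph theorem preceding this one, there is no Mostowski‑absoluteness amalgamation available, since the generic $\subseteq$-chain is not a closed relation; this is precisely why the present conclusion is harder to realize. Given a condition $p$ and the parameter real $z$, I would pass to an intermediate extension $V[K]$ with $z\in V[K]$ of size $<\kappa$, chosen to additionally satisfy the continuum hypothesis (throw a collapse of $\mathfrak c$ to $\aleph_1$ into $K$). Under CH the ideal $I^{V[K]}$ has a $\subseteq$-increasing chain of Borel sets cofinal in it (as $\mathtt{add}(I)=\mathtt{cof}(I)=\aleph_1$ for a Borel $\sigma$-ideal under CH, via unions of countable initial segments), and one may arrange it to end-extend the sequence $p$. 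Such a chain has length $\omega_1^{V[K]}$, which is uncountable in $V[K]$; but after the collapse $Q=\coll(\omega,\omega_1^{V[K]})$ it becomes a countable sequence, hence a legitimate condition $\sigma\in P^{V[K][G_Q]}$, and since $\sigma\in V[K]$ the name $\check\sigma$ is independent of the $Q$-generic. Crucially, in the Solovay model $W$ the set $X^{V[K]}$ is countable, so below $\sigma$ there is still abundant room to extend the chain to a family cofinal in $I^W$, which is what covers all of $X^W$.

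It remains to show that two conditions $p_0\le\check\sigma$ and $p_1\le\check\sigma$ lying in mutually generic extensions of $V[K]$ have a common lower bound. By mutual genericity the codes occurring in both sequences are coded in $X^{V[K]}$, so the two sequences agree through $\sigma$; the "new" members of $p_0$ and of $p_1$ are Borel sets coded in $V[G_0]$, respectively $V[G_1]$. The main obstacle is exactly this amalgamation: these new Borel sets are pairwise $\subseteq$-incomparable (a null set coded in $V[G_0]$ omits reals random over $V[G_1]$, and vice versa), so the two tails cannot be concatenated or interleaved; instead one forms the common lower bound by taking coordinatewise unions of the two tails (still $I$-small, as $I$ is an ideal), and the ordering of $P$ must be set up precisely so that this coordinatewise-union chain counts as an extension of both $p_0$ and $p_1$ while the generic family remains linearly ordered by inclusion. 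Getting this ordering right — so that it simultaneously supports the density arguments of the first paragraph and this amalgamation — is the delicate part of the proof. Once it is in place, $\langle Q,\check\sigma\rangle$ is a balanced virtual condition below $p$, so $P$ is balanced; and because the amalgamation genuinely uses mutual genericity of $V[G_0],V[G_1]$ (two mutually Noetherian extensions can share enough structure to wreck it), $P$ is not Noetherian balanced, consistently with Theorem~\ref{firsttheorem}(3).
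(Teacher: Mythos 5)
Your overall strategy --- force the chain of $I$-small sets directly, with conditions being countable increasing sequences of Borel sets in $I$ --- is not the paper's, and the place where you defer the work (``the ordering of $P$ must be set up precisely so that this coordinatewise-union chain counts as an extension of both $p_0$ and $p_1$ while the generic family remains linearly ordered'') is exactly where the approach breaks down, not a detail to be arranged later. Consider what the generic family is. If it consists of \emph{all} members of \emph{all} conditions in the generic filter, then after amalgamating $p_0$ and $p_1$ by coordinatewise unions, the incomparable sets $A_0^n$ (from $p_0$, coded in $V[G_0]$) and $A_1^n$ (from $p_1$, coded in $V[G_1]$) both belong to the family via the conditions $p_0,p_1$ themselves, and the family is not a chain --- as you yourself observe, a null set coded in $V[G_0]$ and one coded in $V[G_1]$ are typically $\subseteq$-incomparable. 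If instead members may be \emph{replaced} by enlargements as one passes to stronger conditions, then the ``final'' set occupying a given slot is the union of its enlargements along a cofinal subset of the generic filter; in the Solovay model this is an uncountable increasing union of Borel $I$-sets, and there is no reason for it to lie in $I$ --- indeed the whole point of the theorem is that such unions need not be small. So neither reading of the ordering yields both linearity and $I$-smallness, and no choice of bookkeeping fixes this: the obstruction is that the objects you are amalgamating are the $I$-small sets themselves.

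The paper sidesteps this by not forcing the sets at all. Its conditions are pairs $\langle\preceq_p,b_p\rangle$ where $\preceq_p$ is a linear preorder on a countable subset of $X$ and $b_p$ is a countable set of commitments $\langle x,O\rangle$ ($O$ open) forcing the initial segment below $x$ into $O$; the generic object is a linear preorder of all of $X$, the chain of $I$-small sets is the family of its initial segments, and each initial segment is $I$-small because the side conditions trap it inside a single $G_\delta$ set from $I$ (so not even $\sigma$-additivity of $I$ is needed, unlike in your argument). Balance then reduces to amalgamating two linear preorders of $X$ extending a common saturated preorder of $X\cap V[K]$, which is done by a genuinely order-theoretic interleaving (matching the pre-well-ordered initial segments of the two tails inside each cut), with the open-set commitments verified via mutual genericity. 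That interleaving is the real content of the proof, and nothing in your proposal substitutes for it. (Your CH/collapse construction of a cofinal Borel chain in $I^{V[K]}$ is fine as far as it goes, but it feeds into a forcing whose amalgamation cannot be completed.)
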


\noindent In particular, it is possible to exhaust the unit interval $[0,1]$ as a union of Lebesgue null sets linearly ordered with respect to inclusion.

\begin{proof}
The partial order $P$ consists of conditions $p=\langle \preceq_p, b_p\rangle$ such that for some countable set $\supp(p)\subset X$, $\preceq_p$ is a linear preorder on $\supp(p)$ and $b_p$ is a countable set of pairs $\langle x, O\rangle$ where $x\in\supp(p)$ and $O\subset X$ is an open set. In addition, if $y\preceq_p x$ are points in $\supp(p)$ and $\langle x, O\rangle\in b_q$ then $y\in O$.
The ordering is defined by $q\leq p$ if $\supp(p)\subseteq\supp(q)$, $\preceq_p=\preceq_p\restriction\supp(p)$, $b_p\subseteq b_q$, and for every $y\in\supp(q)$, if $x$ is $\preceq_q$-smallest element of $\supp(p)$ such that $y\preceq_q x$, then $x\preceq_q y$.

To see the idea behind the definition, suppose that $G\subset P$ is a generic filter and let $\preceq=\bigcup_{p\in G}\preceq_p$. An elementary density argument shows that $\preceq$ is a linear preorder on all of $X$. The assumptions on the ideal $I$ imply that for every element $x\in X$, the initial segment of $\preceq$ determined by $x$ belongs to the ideal $I$: if $p\in P$ is a condition containing $x$ in its support, there is a $G_\gd$ set $\bigcap_nO_n\in I$ containing all elements of $\supp(p)$, and then the condition $q\leq p$ obtained from adding the pairs $\langle x, O_n\rangle$ for $n\in\gw$ into $b_q$ ensures that the initial segment determined by $x$ is a subset of $\bigcap_nO_n$.

It is not difficult to check that the poset $P$ is analytic and $\gs$-closed. The balance is the only hard part. Let $p\in P$ be a condition. Adding some ordered pairs to $b_p$ if necessary I may assume that if $x, y\in\supp(p)$ then

\begin{enumerate}
\item if $y\preceq_p x$ and $\langle x, O\rangle\in b_p$ then $\langle y, O\rangle\in b_p$;
\item if $x\not\preceq_p y$ then $\langle y, \cantor\setminus\{x\}\rangle\in b_p$.
\end{enumerate}

\noindent For each $y\in X$ let $a(y)=\{x\in \supp(p)\colon y\in O$ for every $O$ with $\langle x, O\rangle\in b_p\}$. Note that by for each $x\in\supp(p)$, $a(x)=[x, \infty)$ by (2) above. Define a relation $\preceq$ on $\cantor$ by $y_0\preceq y_1$ if $a_{y_1}\subseteq a_{y_0}$. Let also $b$ be the set of all pairs $\langle y, O\rangle$ such that for some $x\in a(y)$, $\langle x, O\rangle\in b_p$. 

\begin{claim}
$\coll(\gw, X)$ forces $\langle \preceq, b\rangle$ is a condition in $P$ stronger than $p$.
\end{claim}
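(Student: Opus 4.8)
The plan is to check, one clause at a time, that $\langle\preceq,b\rangle$ meets the definition of a condition of $P$ below $p$. The role of $\coll(\gw,X)$ is purely bookkeeping: in the extension $X\cap V$ is countable, so $\supp:=X\cap V$ and $b$ are countable and $\langle\preceq,b\rangle$ is a legitimate condition; every combinatorial fact used below is about the ground model objects $p$ (with its enlarged $b_p$) and $\preceq_p$, so no absoluteness issue arises. The organizing observation, which I would establish first, is that for every $y\in X$ the set $a(y)$ is a \emph{final segment} of the linear preorder $(\supp(p),\preceq_p)$: if $x\in a(y)$, $x'\in\supp(p)$, and $x\preceq_p x'$, then property (1) of the enlarged $b_p$ gives $\langle x,O\rangle\in b_p$ for every $O$ with $\langle x',O\rangle\in b_p$, hence $y\in O$, hence $x'\in a(y)$. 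Since final segments of a linear preorder are totally ordered by inclusion, this at once makes $\preceq$ (defined by $y_0\preceq y_1\liff a(y_1)\subseteq a(y_0)$) a linear preorder on $\supp$; reflexivity and transitivity are immediate.

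Next I would compute $a(x)$ for $x\in\supp(p)$, obtaining $a(x)=[x,\infty)_{\preceq_p}:=\{x'\in\supp(p)\colon x\preceq_p x'\}$. The inclusion $\supseteq$ is exactly the defining property of $p$ (if $x\preceq_p x'$ and $\langle x',O\rangle\in b_p$ then $x\in O$), and $\subseteq$ uses property (2): when $x\not\preceq_p x'$ the pair $\langle x',X\setminus\{x\}\rangle\in b_p$ witnesses $x'\notin a(x)$. Granting this, four of the clauses for $\langle\preceq,b\rangle\leq p$ fall out immediately: $\supp(p)\subseteq\supp$ is trivial; $\preceq\restriction\supp(p)=\preceq_p$ because for $x_0,x_1\in\supp(p)$ one has $x_0\preceq x_1\liff[x_1,\infty)_{\preceq_p}\subseteq[x_0,\infty)_{\preceq_p}\liff x_0\preceq_p x_1$; $b_p\subseteq b$ because $x\in a(x)$ for $x\in\supp(p)$; and the defining property of $P$ for $\langle\preceq,b\rangle$ holds since if $y_0\preceq y_1$ and $\langle y_1,O\rangle\in b$ via some $x\in a(y_1)$ with $\langle x,O\rangle\in b_p$, then $a(y_1)\subseteq a(y_0)$ gives $x\in a(y_0)$, so $y_0\in O$.

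The only clause needing an extra line is the last part of the ordering: for every $y\in\supp$, if $x$ is the $\preceq$-smallest element of $\supp(p)$ with $y\preceq x$ then $x\preceq y$. Unwinding the definitions and using that $a(y)$ is a final segment of $\preceq_p$ sitting inside $\supp(p)$, the set $\{x'\in\supp(p)\colon y\preceq x'\}$ is exactly $a(y)$; hence such an $x$ is a $\preceq_p$-least element of the final segment $a(y)$, which forces $a(y)=[x,\infty)_{\preceq_p}=a(x)$, i.e.\ $x\preceq y$. (If $a(y)$ has no $\preceq_p$-least element the clause is vacuous.) I do not anticipate a real obstacle: conditions (1) and (2) on the enlarged $b_p$ were arranged precisely so that each $a(y)$ is a final segment of $(\supp(p),\preceq_p)$ and so that $a$ reproduces $\preceq_p$ on old points, and once that final-segment structure is in place every clause is a routine check. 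The one spot to be careful about is the computation of $a(x)$ for old $x$, where both property (2) and the defining property of $p$ are needed.
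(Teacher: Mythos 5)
Your proposal is correct and follows essentially the same route as the paper: establish that each $a(y)$ is a final segment of $(\supp(p),\preceq_p)$ via property (1) (which gives linearity of $\preceq$), compute $a(x)=[x,\infty)$ for $x\in\supp(p)$ via property (2) together with the condition property of $p$, and then read off each clause of ``condition stronger than $p$'' from these two facts. Your write-up is in fact slightly more complete than the paper's, since you explicitly verify $b_p\subseteq b$ and note where countability of the support comes from.
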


\begin{proof}
I first show that $\langle \preceq, b\rangle$ is forced to be a condition in the poset $P$. First of all, $\preceq$ is clearly a preorder. It is linear: by (1) above each set $a(y)$ is a cofinal segment of $\preceq_p$, and such sets are linearly ordered by inclusion. Now, suppose that $y\preceq x$ and $\langle x, O\rangle\in b$. By the definitions, $\langle z, O\rangle\in b_p$ for some $z\in a(x)$. Since $a(x)\subseteq a(y)$, $z\in a(y)$ holds, so $y\in O$ holds as desired.

To show that $\langle\preceq, b\rangle$ is forced to be stronger than $p$, first note that $\preceq\restriction \supp(p)=\preceq_p$ holds. To see that, let $x, y\in\supp(p)$ be any points. If $y\preceq_px$ then $a(x)=[x, \infty)\subseteq a(y)=[y, \infty)$. If $y<x$ are points in $\supp(p)$ then $x\not\preceq y$ by item (2) above. Finally, note that if $x$ is the smallest element of $\supp(p)$ such that $y\preceq x$, then $x\preceq y$. To see that, note that the assumptions imply that $a(y)=[x,\infty)$. The definition of $\preceq$ implies that $x\preceq y$.
\end{proof}

For future reference, note that the set $\supp(p)$ is dense in the preorder $\preceq$ by the definitions. Also, for each $y\in X$, let $B_y=\bigcap\{O\colon \langle y, O\rangle\in b\}$ and observe that $B_y=\{x\in X\colon x\preceq y\}$. This feature, that the initial segments of $\preceq$ which include their supremum are the largest possible $G_\gd$ sets, is the whole purpose of the construction of the preorder $\preceq$.

Now, I must show that $\langle \coll(\gw, X), \langle \preceq, \check b\rangle\rangle$ is a balanced pair. Suppose that $V[G_0], V[G_1]$ are mutually generic extensions and $p_0=\langle\preceq_0, b_0\rangle$ and $p_1=\langle\preceq_1, b_1\rangle$ be any conditions in these respective models stronger than $\langle\preceq, b\rangle$. I must find a common lower bound of conditions $p_0, p_1$. The common lower bound $p_{01}=\langle\preceq_{01}, b_{01}\rangle$ will have $\supp(p_{01})=\supp(p_0)\cup\supp(p_1)$ and $b_{01}=b_0\cup b_1$. The definition of the preorder is the more difficult point.

I will have $\preceq_0\cup\preceq_1\subset\preceq_{01}$, and, naturally, if $x\preceq_0 y\preceq_1 z$ for $x\in\supp(p_0)$, $y\in V$ and $z\in\supp(p_1)$, then $x\preceq_{01}yz$. Finally suppose that $X\cap V=c\cup d$ is a cut in the preorder $\preceq$ and containing some elements of $\supp(p_0)$ (gathered in the set denoted by $e_0$) and some lements of $\supp(p_1)$ (gathered in a set $e_1$). Note that this means that $d$ has no smallest element. I must show how to linearly preorder $e_0\cup e_1$ in $\preceq_{01}$.

Let $f_0, f_1$ be the respective longest prewellordered initial segments of $\langle e_0, \preceq_0\rangle$ and $\langle e_1, \preceq_1\rangle$ inside the cut $c\cup d$. For definiteness assume that $f_1$ is not shorter than $f_0$. Then, for $x_0\in e_0$ and $x_1\in e_1$, put $\langle x_0, x_1\rangle$ into $\preceq_{01}$ if $x_0\in f_0$ and $x_1$ is either in $e_1\setminus f_1$ or $x_1$ is in $f_1$ at an ordinal rank at least that of the rank of $x_0$ in $f_0$. Otherwise, put $\langle x_1, x_0\rangle$ into $\preceq_{01}$.

\begin{claim}
$p_{01}\in P$.
\end{claim}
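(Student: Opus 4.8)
The plan is to verify the four clauses in the definition of $P$ for the triple $p_{01}=\langle\preceq_{01},b_{01}\rangle$. Two of them are immediate: $\supp(p_{01})=\supp(p_0)\cup\supp(p_1)$ is countable, and $b_{01}=b_0\cup b_1$ is a countable set of pairs $\langle x,O\rangle$ with $x\in\supp(p_{01})$ and $O\subseteq X$ open. So the real content is (a) that $\preceq_{01}$ is a linear preorder on $\supp(p_{01})$, and (b) the coherence requirement that $y\preceq_{01}x$ together with $\langle x,O\rangle\in b_{01}$ forces $y\in O$.

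For (a) I would first set the stage. Because $V[G_0]$ and $V[G_1]$ are mutually generic, $\supp(p_0)\cap\supp(p_1)=X\cap V$, and since $p_0,p_1\le\langle\preceq,b\rangle$ the preorders $\preceq_0,\preceq_1$ agree on $X\cap V$ (both restrict to $\preceq$), and $X\cap V$ is dense in $\preceq$ by the density remark above. Well-definedness of $\preceq_{01}$ then amounts to checking that the three families of instructions defining it — copy $\preceq_0\cup\preceq_1$; compose a $\preceq_0$-step with a $\preceq_1$-step through a common point of $X\cap V$; and, inside a $V$-cut $c\cup d$, interleave the longest prewellordered initial segments $f_0,f_1$ and stack the remaining tails — never assign both $\langle u,v\rangle$ and $\langle v,u\rangle$ except when $u,v$ are meant to be $\preceq_{01}$-equivalent. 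The key observation is that two elements of $\supp(p_0)\cup\supp(p_1)$ lying strictly inside the same $V$-cut are $(\preceq_0\cup\preceq_1)$-incomparable and are linked by no composed step (every composed step passes through a point of $X\cap V$, and such a point would split the cut), so the interleaving is free to order them; whereas elements in distinct cuts are separated by a point of $X\cap V$, so their relation is forced and unambiguous.

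Reflexivity is trivial. Totality I would prove by cases: both points in $\supp(p_0)$ (use $\preceq_0$), both in $\supp(p_1)$ (use $\preceq_1$), otherwise a new point from each model. Here one uses that a new point of $\supp(p_i)$, because $p_i\le\langle\preceq,b\rangle$, is either $\preceq_i$-equivalent to a point of $X\cap V$, or lies above all of $X\cap V$, or lies strictly inside a $V$-cut $c\cup d$ whose upper part $d$ has no smallest element; two new points in distinct cuts are compared through an intervening point of $X\cap V$, and two in the same cut through the interleaving. For transitivity I would take a chain $a\preceq_{01}b\preceq_{01}c$, classify each link by the instruction that produced it, and check the composite is recovered. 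The two delicate sub-cases are: an alternating chain of $\preceq_0$- and $\preceq_1$-steps through $X\cap V$ collapses because both preorders are transitive and agree on $X\cap V$; and a chain entering and leaving a single cut composes correctly because, inside a fixed cut, the prescribed relation is a genuine linear preorder whose order type is the rank-interleaving of $f_0,f_1$ followed by the $\preceq_1$-tail $e_1\setminus f_1$ and then the $\preceq_0$-tail $e_0\setminus f_0$; this is exactly where one uses both the density of $X\cap V$ and the no-minimum property of $d$. A useful byproduct of this bookkeeping is that $\preceq_{01}$ introduces no new comparabilities inside $\supp(p_0)$ or inside $\supp(p_1)$, i.e. $\preceq_{01}\restriction\supp(p_i)=\preceq_i$.

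For (b), assume without loss that $\langle x,O\rangle\in b_0$, so $x\in\supp(p_0)$ and every $z\in\supp(p_0)$ with $z\preceq_0 x$ lies in $O$ since $p_0\in P$. If $y\in\supp(p_0)$ the byproduct from (a) gives $y\preceq_0 x$ and hence $y\in O$. If $y\in\supp(p_1)\setminus\supp(p_0)$, I would run the totality-style reduction of the comparison $y\preceq_{01}x$ to exhibit a witness — a point of $X\cap V$ below $x$ and $\preceq_1$-above $y$, or an element of $e_0\cup e_1$ in the relevant cut — and then invoke the way the pairs in $b$ were manufactured from the sets $a(\cdot)$ together with the closure conditions (1) and (2) imposed on $b_p$, which were set up precisely so that membership in a designated open set propagates downward along $\preceq$ and across the two models, to conclude $y\in O$. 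The main obstacle I anticipate is the transitivity verification in (a) — reconciling the interleaving clause with chains that enter and leave a cut through points of $X\cap V$ — and clause (b) for cross-model pairs is the secondary delicate point, where the bookkeeping in the construction of $b$ is what makes the argument go through.
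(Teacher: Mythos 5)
Your part (a) — that $\preceq_{01}$ is a linear preorder and that $\preceq_{01}\restriction\supp(p_i)=\preceq_i$ — is the routine half, and your cut analysis there is essentially right. But the heart of the claim is your part (b) for cross-model pairs, and there your proposal stops at a description of the desired conclusion ("membership in a designated open set propagates downward along $\preceq$ and across the two models") rather than an argument. The propagation \emph{across the two models} is precisely the nontrivial step: you have $x_0\in\supp(p_0)\setminus V$, $\langle x_1,O\rangle\in b_1$ with $O$ an open set living only in $V[G_1]$, and coherence of $p_1$ only tells you about points of $\supp(p_1)$, which does not contain $x_0$. The closure conditions (1) and (2) on $b_p$ by themselves give you nothing about $x_0$.

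The missing ideas are these. First, split according to whether there is an intervening ground-model point, i.e.\ some $y\in X\cap V$ with $x_0\preceq_0 y\preceq_1 x_1$, or whether $x_0,x_1$ sit strictly inside the same $\preceq$-cut $X\cap V=c\cup d$ (which cut lies in $V[G_0]\cap V[G_1]$ and hence in $V$ by mutual genericity, and whose upper part $d$ has no least element because $p_0,p_1\le p$). Second, in either case exhibit a $G_\delta$ set coded in $V$: the set $B_y=\bigcap\{O'\colon\langle y,O'\rangle\in b\}$ in the first case, and $\bigcap\{B_y\colon y\in\supp(p)\cap d\}$ in the second; this is exactly what the construction of $b$ from the sets $a(\cdot)$ was for — it makes these initial segments the \emph{largest possible} $G_\delta$ sets with the given trace on $X\cap V$. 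Coherence of $p_0$ puts $x_0$ into this $G_\delta$ set, and coherence of $p_1$ shows that all of its $V$-points lie in $O$. Third — and this is the step your sketch omits entirely — a mutual genericity argument is needed to upgrade ``all $V$-points of a $G_\delta$ set coded in $V$ lie in the $V[G_1]$-open set $O$'' to ``all $V[G_0]$-points of it lie in $O$''; only then does $x_0\in O$ follow. Without identifying these ground-model-coded $G_\delta$ sets and performing this transfer, the coherence verification for $p_{01}$ does not go through, so as written the proposal has a genuine gap at its central point.
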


\begin{proof}
Suppose that $x_0, x_1\in\supp(p_{01})$, $x_0\preceq_{01} x_1$ and $\langle x_1, O\rangle\in b_{01}$ is a pair. I must show that $x_0\in O$ holds. For definiteness assume that $x_0\in\supp(p_0)$ and $x_1\in\supp(p_1)$ and $\langle x_1, O\rangle\in b_1$ all hold.

\textbf{Case 1.} There is $y\in V$ such that $x_0\preceq_0y_0\preceq_1 x_1$. In such a case, in the ground model consider the set $B_y$. Since $p_0\in P$, it must be the case that $x_0\in B_y$. Since $p_1\in P$, it must be the case that $B_y\cap V\subset O$, and by a mutual genericity argument $B_y\cap V[G_0]\subset O$. It follows that $x_0\in O$ holds as desired.

\textbf{Case 2.} Not case 1. Then $x_0, x_1$ are in the same $\preceq$-cut $X\cap V=c\cup d$. Since $c,d\in V[G_0]\cap V[G_1]$, a mutual genericity argument shows that $c, d\in V$. Since $p_0, p_1\leq p$, it must be the case that the set $d$ has no smallest element. Work in $V$. Note that the set $\supp(p)$ is coinitial in $d$ and that $c=\bigcap\{ B_y\colon y\in\supp(p)\cap d\}$ is a $G_\gd$-set. Since $p_0\in P$, it must be the case that $x_0\in c$. Since $p_1\in P$, it must be the case that $c\cap V\subset O$, and by a mutual genericity argument $c\cap V[G_0]\subset O$. It follows that $x_0\in O$ holds as desired.
\end{proof}

\noindent It is now not difficult to show that $p_{01}$ is a common lower bound of $p_0$ and $p_1$. To show for example that $p_{01}\leq p_0$, suppose that $x\in\supp(p_{01})$ and suppose that $y$ is the smallest element of $\supp(p_0)$ such that $x\preceq_{01} y$; it must be proved that $y\preceq_{01}x$ holds. If $x\in\supp(p_0)$ then this is clear from the fact that $\preceq_0$ is a preorder. Suppose then that $x\in\supp(p_1)$ holds. If $y$ is $\preceq_0$-equivalent to some element in $V$, then $y\preceq_{01}x$ follows from $p_1\leq \langle \preceq, b\rangle$. Suppose then that $y$ is not $\preceq_0$-equivalent to any point in $V$. Let $X\cap V=c\cup d$ be the cut in $\preceq$ into which both $x, y$ fall. It cannot be the case that $y$ falls into the illfounded part of $\supp(p_0)$ inside this cut, since that illfounded part is convex in $\preceq_{01}$ and does not have a smallest element. Thus, $y$ falls into the longest pre-well-ordered initial segment of this cut in $\supp(p_0)$. Since $x\preceq_{01}y$ holds, it must be the case that $x$ falls into the pre-well-ordered segment of this cut in $\supp(p_1)$ and the ordinal rank of $x$ and $y$ in these two must be the same. Again, $y\preceq_{01}x$ holds as desired.
\end{proof}

\bibliographystyle{plain}
\bibliography{odkazy,zapletal}

\end{document}